\documentclass[12pt]{article}
\usepackage{amssymb,amsmath,amsthm,cite,enumerate,geometry,float}
\newtheorem{theorem}{Theorem}

\newtheorem{conjecture}[theorem]{Conjecture}

\newtheorem{claim}{Claim}

\usepackage{lineno}
\usepackage{setspace}
\allowdisplaybreaks

\begin{document}
\onehalfspace

\title{Cyclic Neighborhoods in Digraphs}
\author{Thilo Hartel \and Dieter Rautenbach}
\date{}

\maketitle
\vspace{-10mm}
\begin{center}
Institute of Optimization and Operations Research,\\ Ulm University, Ulm, Germany\\
\texttt{$\{$thilo.hartel,dieter.rautenbach$\}$@uni-ulm.de}
\end{center}

\begin{abstract}
Let $D$ be a digraph of order $n$ and size $m$ 
with the property that 
no out-neighborhood or in-neighborhood of any vertex is acyclic.
We show that, 
if $D$ is strongly connected, then $m\geq 7n/3$, and, 
if $D$ is strongly $2$-connected, then $m\geq 8n/3$.
Both results are best-possible.\\[3mm]
{\bf Keywords:} Sparse separators; sparse vertex cuts; cyclic neighborhoods 
\end{abstract}

\section{Introduction}

The motivation for the research presented here comes from the study of sparse vertex cuts in undirected graphs. 
Chen and Yu \cite{chyu} gave an elegant proof 
for the seminal result 
that every graph of order $n$ 
with less than $2n-3$ edges 
has a vertex cut
that is an independent set.
The existence of vertex cuts
satisfying sparsity conditions involving 
the maximum degree \cite{beraraso},
the degeneracy \cite{bonesovoruvo,
bocofefigosa,chrara, harara} and 
the chromatic number \cite{aubobofopi}
has been studied.
Unlike for the result of Chen and Yu,
for which the extremal graphs are known \cite{lepf,rara},
there are few tight results and 
many open problems in this context.
Restricting the attention to minimum vertex cuts
satisfying sparsity conditions
led to some best-possible results \cite{chtazh,harara}.
For a graph $G$,
an obvious necessary condition for the absence     
of a vertex cut satisfying a sparsity condition
is that the neighborhood $N_G(u)$ 
of every non-universal vertex $u$ in $G$
does not satisfy that sparsity condition.
Motivated by an open problem related to forest cuts,
that is, vertex cuts that are $1$-degenerate,
Chernyshev et al.~\cite{chrara} 
asked for the minimum size of $3$-connected graphs of a given order with the property that every neighborhood of a vertex contains a cycle.
Complete answers and elegant proofs were provided in \cite{litazh,scue}.
Kriesell \cite{kr} suggested to consider similar problems for digraphs. 
In particular, he suggested 
to study
the existence of separators 
in digraphs
that induce acyclic subdigraphs.

Before we explain our results,
we collect some notation,
for which we generally follow \cite{bagu}.
A digraph is {\it cyclic} if it contains 
a directed cycle, and {\it acyclic} otherwise.
A set $U$ of vertices of a digraph $D$ 
is {\it (a)cyclic (in $D$)}
if the subdigraph $D[U]$ of $D$ induced by $U$
is (a)cyclic.
A digraph $D$ is {\it neighborhood-cyclic} if,
for every vertex $u$ of $D$, 
the out-neighborhood $N^+_D(u)$ of $u$ in $D$ and 
the in-neighborhood $N^-_D(u)$ of $u$ in $D$ 
are both cyclic in $D$.
For a vertex $u$ in a digraph $D$
and a set $U$ of vertices of $D$,
let $m_D(u,U)$ be the number of arcs $e$ of $D$
between $u$ and vertices in $U$,
regardless of their orientation.
If there is only one arc $e$ between two vertices $u$ and $v$ in a digraph $D$,
then $e$ is called a {\it single arc} and 
we say that $u$ and $v$ are {\it joined by a single arc}.
If $D$ contains both possible arcs 
of opposite orientation
between $u$ and $v$,
then the pair of these arcs is called a {\it double arc} and
we say that $u$ and $v$ are {\it joined by a double arc}.
For a non-negative integer $k$,
let $[k]$ be the set of positive integers at most $k$, and let $[k]_0=[k]\cup\{ 0\}$.

For an integer $n$ at least $3$,
let $\vec{C}_n^*$ be the digraph that arises from
the disjoint union of 
an isolated vertex $u$ and 
a directed cycle $\vec{C}_{n-1}$ 
by adding all possible $2(n-1)$ arc between $u$ and $C$.
The digraph $\vec{C}_n^*$ has order $n$ and size $3(n-1)$, 
is strongly $2$-connected and neighborhood-cyclic,
and has no acyclic separator.
We believe that $\vec{C}_n^*$ is extremal 
with these properties 
and pose the following.

\begin{conjecture}\label{conjecture1}
If $D$ is a strongly connected digraph
with $n\geq 3$ vertices and $m$ arcs
that does not have an acyclic separator,
then $m\geq 3n-3$.
\end{conjecture}
As in the undirected case,
the study of digraphs 
that do not have acyclic separators
naturally leads to 
consider digraphs that are 
neighborhood-cyclic.
If $D$ is a neighborhood-cyclic digraph of order $n$ and size $m$, 
then $\delta^+(D),\delta^-(D)\geq 2$,
which implies $m\geq 2n$.
The disjoint union of 
complete digraphs $\overset\leftrightarrow{K}_3$ of order $3$
shows that this trivial estimate is best-possible.
Our results in this paper
are the two related best-possible theorems below.

Let ${\cal D}$ be the set of digraphs $D$ 
that arise from $k\geq 2$ disjoint copies $D_0,\ldots,D_{k-1}$ 
of the complete digraph 
$\overset\leftrightarrow{K}_3$
of order $3$ 
by adding, for every $i$ in $[k-1]_0$,
an arc from some vertex in $D_i$ 
to some vertex in $D_{(i+1)\mod k}$.
See Figure \ref{fig0} for an illustration.

\begin{figure}[h]
\centering
\unitlength 1mm 
\linethickness{0.4pt}
\ifx\plotpoint\undefined\newsavebox{\plotpoint}\fi 
\begin{picture}(51.5,30.5)(0,0)
\put(6,0){\circle*{1}}
\put(26,0){\circle*{1}}
\put(26,30){\circle*{1}}
\put(6,30){\circle*{1}}
\put(46,0){\circle*{1}}
\put(46,30){\circle*{1}}
\put(1,10){\circle*{1}}
\put(21,10){\circle*{1}}
\put(21,20){\circle*{1}}
\put(1,20){\circle*{1}}
\put(41,10){\circle*{1}}
\put(41,20){\circle*{1}}
\put(11,10){\circle*{1}}
\put(31,10){\circle*{1}}
\put(31,20){\circle*{1}}
\put(11,20){\circle*{1}}
\put(51,10){\circle*{1}}
\put(51,20){\circle*{1}}
\put(3.5,5){\vector(-1,2){.07}}\multiput(6,0)(-.033557047,.067114094){149}{\line(0,1){.067114094}}
\put(23.5,5){\vector(-1,2){.07}}\multiput(26,0)(-.033557047,.067114094){149}{\line(0,1){.067114094}}
\put(23.5,25){\vector(-1,-2){.07}}\multiput(26,30)(-.033557047,-.067114094){149}{\line(0,-1){.067114094}}
\put(3.5,25){\vector(-1,-2){.07}}\multiput(6,30)(-.033557047,-.067114094){149}{\line(0,-1){.067114094}}
\put(43.5,5){\vector(-1,2){.07}}\multiput(46,0)(-.033557047,.067114094){149}{\line(0,1){.067114094}}
\put(43.5,25){\vector(-1,-2){.07}}\multiput(46,30)(-.033557047,-.067114094){149}{\line(0,-1){.067114094}}
\put(8.5,5){\vector(1,2){.07}}\multiput(6,0)(.033557047,.067114094){149}{\line(0,1){.067114094}}
\put(28.5,5){\vector(1,2){.07}}\multiput(26,0)(.033557047,.067114094){149}{\line(0,1){.067114094}}
\put(28.5,25){\vector(1,-2){.07}}\multiput(26,30)(.033557047,-.067114094){149}{\line(0,-1){.067114094}}
\put(8.5,25){\vector(1,-2){.07}}\multiput(6,30)(.033557047,-.067114094){149}{\line(0,-1){.067114094}}
\put(48.5,5){\vector(1,2){.07}}\multiput(46,0)(.033557047,.067114094){149}{\line(0,1){.067114094}}
\put(48.5,25){\vector(1,-2){.07}}\multiput(46,30)(.033557047,-.067114094){149}{\line(0,-1){.067114094}}
\put(6,10){\vector(1,0){.07}}\put(1,10){\line(1,0){10}}
\put(26,10){\vector(1,0){.07}}\put(21,10){\line(1,0){10}}
\put(26,20){\vector(1,0){.07}}\put(21,20){\line(1,0){10}}
\put(6,20){\vector(1,0){.07}}\put(1,20){\line(1,0){10}}
\put(46,10){\vector(1,0){.07}}\put(41,10){\line(1,0){10}}
\put(46,20){\vector(1,0){.07}}\put(41,20){\line(1,0){10}}
\put(6,10){\vector(-1,0){.07}}\put(11,10){\line(-1,0){10}}
\put(26,10){\vector(-1,0){.07}}\put(31,10){\line(-1,0){10}}
\put(26,20){\vector(-1,0){.07}}\put(31,20){\line(-1,0){10}}
\put(6,20){\vector(-1,0){.07}}\put(11,20){\line(-1,0){10}}
\put(46,10){\vector(-1,0){.07}}\put(51,10){\line(-1,0){10}}
\put(46,20){\vector(-1,0){.07}}\put(51,20){\line(-1,0){10}}
\put(3.5,5){\vector(1,-2){.07}}\multiput(1,10)(.033557047,-.067114094){149}{\line(0,-1){.067114094}}
\put(23.5,5){\vector(1,-2){.07}}\multiput(21,10)(.033557047,-.067114094){149}{\line(0,-1){.067114094}}
\put(23.5,25){\vector(1,2){.07}}\multiput(21,20)(.033557047,.067114094){149}{\line(0,1){.067114094}}
\put(3.5,25){\vector(1,2){.07}}\multiput(1,20)(.033557047,.067114094){149}{\line(0,1){.067114094}}
\put(43.5,5){\vector(1,-2){.07}}\multiput(41,10)(.033557047,-.067114094){149}{\line(0,-1){.067114094}}
\put(43.5,25){\vector(1,2){.07}}\multiput(41,20)(.033557047,.067114094){149}{\line(0,1){.067114094}}
\put(8.5,5){\vector(-1,-2){.07}}\multiput(11,10)(-.033557047,-.067114094){149}{\line(0,-1){.067114094}}
\put(28.5,5){\vector(-1,-2){.07}}\multiput(31,10)(-.033557047,-.067114094){149}{\line(0,-1){.067114094}}
\put(28.5,25){\vector(-1,2){.07}}\multiput(31,20)(-.033557047,.067114094){149}{\line(0,1){.067114094}}
\put(8.5,25){\vector(-1,2){.07}}\multiput(11,20)(-.033557047,.067114094){149}{\line(0,1){.067114094}}
\put(48.5,5){\vector(-1,-2){.07}}\multiput(51,10)(-.033557047,-.067114094){149}{\line(0,-1){.067114094}}
\put(48.5,25){\vector(-1,2){.07}}\multiput(51,20)(-.033557047,.067114094){149}{\line(0,1){.067114094}}
\put(11,15){\vector(0,-1){.07}}\put(11,20){\line(0,-1){10}}
\put(16,10){\vector(1,0){.07}}\put(11,10){\line(1,0){10}}
\put(33.5,5){\vector(3,2){.07}}\multiput(26,0)(.0505050505,.0336700337){297}{\line(1,0){.0505050505}}
\put(46,15){\vector(1,1){.07}}\multiput(41,10)(.0336700337,.0336700337){297}{\line(0,1){.0336700337}}
\put(33.5,25){\vector(-3,2){.07}}\multiput(41,20)(-.0505050505,.0336700337){297}{\line(-1,0){.0505050505}}
\put(18.5,25){\vector(-3,-2){.07}}\multiput(26,30)(-.0505050505,-.0336700337){297}{\line(-1,0){.0505050505}}
\end{picture}
\caption{A digraph of order $18$ in ${\cal D}$.
Double arcs are depicted as 
\unitlength 0.7mm 
\linethickness{0.4pt}
\ifx\plotpoint\undefined\newsavebox{\plotpoint}\fi 
\begin{picture}(10.5,4)(0,0)
\put(0,1.5){\circle*{1}}
\put(10,1.5){\circle*{1}}
\put(5,1.5){\vector(1,0){.07}}
\put(0,1.5){\line(1,0){10}}
\put(5,1.5){\vector(-1,0){.07}}
\end{picture}.}
\label{fig0}
\end{figure}

\begin{theorem}\label{theorem1}
If $D$ is a strongly connected and neighborhood-cyclic digraph 
with $n\geq 5$ vertices and $m$ arcs, then
\begin{eqnarray}\label{e1}
m\geq \frac{7}{3}n
\end{eqnarray}
with equality in \eqref{e1}
if and only if $D\in {\cal D}$.
\end{theorem}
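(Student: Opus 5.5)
We propose a discharging (weight) argument. Every vertex $u$ has $d^+(u),d^-(u)\geq 2$. Moreover, a cyclic out-neighborhood with exactly two vertices forces these two vertices to be joined by a double arc, and the same holds for in-neighborhoods. Let $d(u)=d^+(u)+d^-(u)$, so that $2m=\sum_u d(u)$. The inequality $m\geq 7n/3$ then reads $\sum_u (d(u)-14/3)\geq 0$. Call a vertex \emph{poor} if $d(u)\leq 4$, that is, $d(u)=4$ with $d^+(u)=d^-(u)=2$. Vertices with $d(u)\geq 5$ have surplus at least $1/3$, and poor vertices have deficit $2/3$. The goal is to move charge from rich vertices to poor ones.

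\textbf{Step 1: local structure of a poor vertex.} Let $u$ be poor with $N^+(u)=\{a,b\}$ and $N^-(u)=\{c,d\}$. Then $a,b$ are joined by a double arc, and so are $c,d$. If $\{a,b\}=\{c,d\}$, then $u,a,b$ span a complete $\overset\leftrightarrow{K}_3$, which we call a \emph{triangle block}. Otherwise $u$ has at least three distinct neighbors. We would show that this forces enough extra degree at $a,b,c,d$. For instance, $a$ already has $u$ and $b$ in its in-neighborhood and $b$ in its out-neighborhood. Its out-neighborhood must be cyclic, so it needs a second out-neighbor adjacent to $b$ in a cyclic way, and so on. In this case we aim to show that $u$ can collect its $2/3$ from neighbors of degree at least $5$.

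\textbf{Step 2: triangle blocks.} Consider a maximal set $T=\{x,y,z\}$ inducing $\overset\leftrightarrow{K}_3$ in which some vertex is poor. Since $n\geq 5$ and $D$ is strongly connected, $T$ has at least one arc leaving and at least one arc entering. Hence the total degree of $T$ is at least $12+2=14=3\cdot 14/3$, where each external arc is counted at the $T$-endpoint. So $T$ is exactly balanced when it has one in-arc and one out-arc, which is the structure of ${\cal D}$. A vertex of $T$ carrying an external arc has degree at least $5$, yet its surplus is consumed within $T$. Therefore the external neighbors must pay for themselves.

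\textbf{Step 3: discharging and global accounting.} Each non-poor vertex that is not in a tight triangle block gives charge to adjacent poor vertices not in blocks, at a rate of $1/3$ per relevant arc. We would check via Step 1 that no rich vertex gives away more than its surplus. The delicate cases are rich vertices of degree $5$ adjacent to several poor vertices, and rich vertices inside triangle blocks. For equality, every vertex must end with charge exactly $0$. This should force every poor vertex into a triangle block with exactly one in-arc and one out-arc, and every vertex to lie in such a block. Strong connectivity then arranges the blocks cyclically, which gives $D\in{\cal D}$. Conversely, members of ${\cal D}$ have $6k+k=7k=7n/3$ arcs and are neighborhood-cyclic.

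\textbf{Main obstacle.} The main obstacle is Step 1/Step 3, namely poor vertices not in a $\overset\leftrightarrow{K}_3$ and degree-$5$ vertices adjacent to many poor vertices. There I expect a careful case analysis, and possibly a minimal-counterexample reduction that contracts or deletes a triangle block while preserving strong connectivity and neighborhood-cyclicity, with induction on $n$.
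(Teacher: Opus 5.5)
\textbf{There is a genuine gap.} Your framework is the paper's: degree-$4$ vertices are the only poor ones, each needs $\frac{2}{3}$, and vertices of degree at least $5$ pay. But Steps 1 and 3, where the bound $m\geq\frac{7}{3}n$ is actually proved, are only announced, and the one concrete rule you state fails.

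Read literally, a rich vertex paying $\frac{1}{3}$ per arc to adjacent poor vertices outside triangle blocks can overspend. Take vertices $u,v_1,v_2,x,y,w_1,w_2$ with the following arcs:
\begin{itemize}
\item double arcs between $v_1$ and each of $v_2,x,y,w_1,w_2$, and between $w_1$ and $w_2$;
\item single arcs $(u,v_1)$, $(u,v_2)$, $(w_1,u)$, $(w_2,u)$, $(v_2,x)$, $(x,y)$, $(y,v_2)$.
\end{itemize}
This digraph is strongly connected and neighborhood-cyclic, with $n=7$. Its poor vertices $u,x,y$ lie in no $\overset\leftrightarrow{K}_3$, so no triangle block absorbs anything. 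Yet the degree-$5$ vertex $v_2$, whose surplus is $\frac{1}{3}$, has an arc to each of $u,x,y$, and your rule leaves it with $4<\frac{14}{3}$. What a degree-$5$ vertex can afford depends on how many poor neighbors it has, so the payment must be adaptive.

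Step 2 has a related bookkeeping problem. Triangle blocks can share a vertex: for example, a hub forming $\overset\leftrightarrow{K}_3$'s with two disjoint double-arc pairs of degree-$4$ vertices. In that case, summing the total degree of $T$ over blocks double counts the hub.

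The paper supplies two ingredients you are missing.
\begin{enumerate}
\item \emph{Structural lemma.} If a degree-$4$ vertex had both out-neighbors (or both in-neighbors) of degree $4$, the three vertices would form a $\overset\leftrightarrow{K}_3$ with no arc entering or leaving it. This is impossible for a strongly connected $D$ with $n\geq 5$. Hence $\Delta^+(D[V_4]),\Delta^-(D[V_4])\leq 1$, and every component of $D[V_4]$ is an isolated vertex, a directed path, or a directed cycle. Propagating cyclicity along such a path or cycle $u_1u_2\ldots$ shows that the rich out-neighbor $v$ of $u_1$ is joined by a double arc to every vertex of the component.
\item \emph{Adaptive rule.} Each $v\in V_{\geq 5}$ sends $\bigl(d_D(v)-\frac{14}{3}\bigr)/m_D(v,V_4)$ along every arc to $V_4$. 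Rich vertices then end with at least $\frac{14}{3}$ by construction, and only poor vertices need checking.
\end{enumerate}

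That check is a short case analysis: the poor vertex is isolated in $D[V_4]$, on a cycle, or on a path. The only extra fact needed concerns an isolated poor vertex $u$ with disjoint in- and out-neighborhoods. If both out-neighbors of $u$ have degree $5$, then each has exactly one arc to $V_4$, and likewise for in-neighbors.

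For the equality case, you track exactly when a poor vertex receives only $\frac{2}{3}$, and note that equality forces $m_D(v,V_4)>0$ for every rich $v$. Together these give $D\in{\cal D}$. Your equality sketch presupposes precisely this analysis. No minimal-counterexample reduction is needed.
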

If $D$ is in ${\cal D}$ 
and $D_0,\ldots,D_{k-1}$
are as above, then, 
for every $i$ in $[k-1]_0$,
there is a vertex $v_i$ in $V(D_i)$
with $d_D(v_i)=4$, and
adding the arcs of the directed cycle
$v_0v_1\ldots v_{k-1}v_0$ to $D$
yields a digraph showing that 
also our second result is best-possibe.

\begin{theorem}\label{theorem2}
If $D$ is a strongly $2$-connected 
and neighborhood-cyclic digraph 
with $n\geq 9$ vertices and $m$ arcs,
then 
$$m\geq \frac{8}{3}n.$$
\end{theorem}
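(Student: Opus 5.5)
We prove Theorem \ref{theorem2} by a discharging argument on the total degree. Since $m=\frac{1}{2}\sum_{u}d_D(u)$, where $d_D(u)=d^+_D(u)+d^-_D(u)$, the statement is equivalent to showing that the average total degree is at least $16/3$. Give every vertex charge $d_D(u)$. We redistribute charge so that every vertex ends with at least $16/3$.

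We start with the local structure forced by the hypotheses. Neighborhood-cyclicity gives $d^+(u),d^-(u)\geq 2$, so $d(u)\geq 4$. We call a vertex \emph{low} if $d(u)=4$ and \emph{medium} if $d(u)=5$. Every vertex with $d(u)\geq 6$ has a surplus of at least $2/3$.

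Suppose $u$ is low. Then $N^+(u)$ consists of exactly two vertices joined by a double arc, and so does $N^-(u)$.
\begin{itemize}
\item If $N^+(u)=N^-(u)=\{a,b\}$, then $\{u,a,b\}$ induces $\overset\leftrightarrow{K}_3$. Strong $2$-connectivity together with $n\geq 9$ forces both $a$ and $b$ to have arcs leaving and entering this triangle, since otherwise $a$ or $b$ would be a cut vertex. Hence $d(a),d(b)\geq 6$.
\item If the two neighborhoods differ, then $u$ has at least three distinct neighbors, each incident with a double arc inside the neighborhood. We analyse the degrees of these neighbors in the same spirit.
\end{itemize}
We do the analogous analysis for medium vertices: there one of the two neighborhoods has size $2$ and is a double arc, and the other has size $3$ and contains a cycle.

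The discharging rule: every vertex of degree at least $6$ sends $2/3$ to each low neighbor lying in a double-arc triangle with it, and $1/3$ to each medium neighbor in a similar configuration. We then must check that high vertices do not go below $16/3$. A vertex $v$ can lie in only boundedly many such triangles relative to its degree, because each triangle uses two of $v$'s double-arc incidences. That gives roughly $d(v)-\frac{2}{3}\cdot\frac{d(v)}{2}\cdot(\ldots)$, which needs $d(v)\geq 6$ to be handled carefully; vertices of degree exactly $6$ or $7$ are the tight cases.

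\textbf{Main obstacle.} The hard step is bounding the charge a high vertex gives away, especially when it is adjacent to several low or medium vertices. For example, a degree-$6$ vertex $a$ lies in a triangle $\{u,a,b\}$ with a low vertex $u$ and has only two further arcs. It must not serve another low vertex, otherwise $a$ drops below $16/3$. We would try to exclude such configurations using $2$-connectivity: if $a$ belonged to two $\overset\leftrightarrow{K}_3$'s with low apexes, then removing $a$ would likely disconnect a small set, contradicting strong $2$-connectivity for $n\geq 9$. Where exclusion fails, we would instead route the deficit through the partner $b$, which then needs extra arcs of its own.

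If this cannot be closed directly, the fallback is a minimal-counterexample reduction: contract a $\overset\leftrightarrow{K}_3$ containing a low vertex, keep strong $2$-connectivity and neighborhood-cyclicity, and compare counts, with $n\geq 9$ handling small base cases.
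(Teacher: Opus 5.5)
\textbf{There is a genuine gap: the proposal stops where the proof begins.} Your global plan is the paper's: give each vertex charge $d_D(u)$ and redistribute until every vertex has at least $\frac{16}{3}$. Your argument that $d_D(a),d_D(b)\geq 6$ when $N^+_D(u)=N^-_D(u)=\{a,b\}$ and $d_D(u)=4$ is correct. Everything after that is deferred, and three ingredients are missing.

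\emph{First, you never show that a donor exists} for a degree-$4$ vertex with $N^+_D(u)\neq N^-_D(u)$, or for the degree-$2$ side of a degree-$5$ vertex. The paper proves a separate lemma for this: if $d^\sigma_D(v)=2$, then $N^\sigma_D(v)\cap V_{\geq 6}\neq\emptyset$. Its proof uses strong $2$-connectivity to rule out both vertices of $N^\sigma_D(v)$ having degree at most $5$. If ``double-arc triangle'' in your rule means $\overset\leftrightarrow{K}_3$, such a low vertex receives nothing. If it means any triangle formed by $u$ and the double-arc pair in $N^\sigma_D(u)$, you run into the next problem.

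\emph{Second, your bound on what a donor pays rests on a false heuristic.} You assume each served vertex uses two of the donor's double-arc incidences. A single double arc between $u$ and $v$ can be the whole set $N^\sigma_D(w)$ for many low vertices $w$ that are joined to $u$ by single arcs. For example, a degree-$6$ vertex $u$ with one double arc can serve up to four such $w$. A flat payment of $\frac{2}{3}$ then costs $u$ up to $\frac{8}{3}$, and if both $u$ and $v$ pay, $w$ is overpaid as well. The paper fixes this in two steps:
\begin{itemize}
\item it splits each demand between the vertices of $N^\sigma_D(w)\cap V_{\geq 6}$ in proportion to $d_D-5$, with a tie-breaking rule for degree-$5$ vertices;
\item it shows, again via strong $2$-connectivity, that the partner $v$ must have large degree whenever $u$ serves many vertices. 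Four served vertices force $d_D(v)\geq 8$, so $u$ pays only $\frac{2}{3}\cdot\frac{1}{4}$ each.
\end{itemize}

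\emph{Third, proportional splitting still overcharges a degree-$6$ vertex in specific configurations.} These are a $\overset\leftrightarrow{K}_3$ on vertices of degrees $5,6,6$ or $5,6,\geq 7$ whose degree-$6$ vertex is the only vertex of degree at least $6$ in $N^\sigma_D(x)$ for some low $x$. The paper needs two special pre-phases for these. Beyond that, checking every vertex of degree $6,7,\dots,11$ and $\geq 12$ is a long case analysis. It also uses the structure of $D[V_4]$: its components are isolated vertices or directed paths, because a directed-cycle component forces $D=\vec{C}_n^*$ with $m=3n-3$. ``We would try to exclude such configurations'' does not replace any of this.

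\emph{The fallback is not viable as stated.} You give no reason why contracting a $\overset\leftrightarrow{K}_3$ preserves neighborhood-cyclicity and strong $2$-connectivity. Also, $\vec{C}_n^*$ is strongly $2$-connected and neighborhood-cyclic with $3n-3<\frac{8}{3}n$ for $n\leq 8$. So a reduction that lowers $n$ can land where the statement is false, and the induction has no base.
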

The entire rest of this paper is devoted
to the proofs of the two theorems.

\section{Proofs}

\begin{proof}[Proof of Theorem \ref{theorem1}]
Let $D$ be as in the statement.
For a non-negative integer $d$, 
let $V_d=\{ u\in V(D):d_D(u)=d_D^+(u)+d_D^-(u)=d\}$
and let $V_{\geq d}=\{ u\in V(D):d_D(u)\geq d\}$.
Since $D$ is neighborhood-cyclic,
we have $\delta^+(D),\delta^-(D)\geq 2$.
This implies that $V(D)=V_{\geq 4}$,
and that, for every vertex $u$ in $V_4$,
the two out-neighbors of $u$ as well as 
the two in-neighbors of $u$ form $2$-cycles 
$\overset\leftrightarrow{K}_2$.
If some vertex $u$ in $V_4$ 
has both its out-neighbors in $V_4$, 
then $\{ u\}\cup N_D^+(u)$ 
induces a complete digraph 
$\overset\leftrightarrow{K}_3$, 
which contradicts $n\geq 5$.
By symmetry, it follows that
$\Delta^+(D[V_4]),\Delta^-(D[V_4])\leq 1$,
which implies that each component of $D[V_4]$ 
is an isolated vertex,
or a directed cycle, 
or a directed path.

Consider the following discharging procedure:\\[-8mm]
\begin{quote}
{\it Assign to each vertex $u$ of $D$ 
an initial charge of 
$d_D(u)$.
For every vertex $u$ in $V_{\geq 5}$ 
with $m_D(u,V_4)>0$
proceed as follows:
For every arc $e$ of $D$ between $u$ 
and some vertex $v$ in $V_4$,
regardless of its orientation,
move 
$\frac{\left(d_D(u)-\frac{14}{3}\right)}{m_D(u,V_4)}$
charge from $u$ to $v$.}
\end{quote}
Let $c(u)$ be the final charge
for every vertex $u$ of $D$.
Trivially, 
for every vertex $u$ in $V_{\geq 5}$,
we have $c(u)\geq \frac{14}{3}$.
In order to show \eqref{e1},
we show that $c(u)\geq \frac{14}{3}$
also holds for the vertices $u$ in $V_4$.
In order to charaterize 
the extremal digraphs for \eqref{e1},
we sometimes establish more.

\medskip

Let $u$ be a vertex in $V_4$.

\medskip

\noindent {\bf Case 1} {\it $u$ is an isolated vertex in $D[V_4]$.}

\medskip

\noindent Let $N^+_D(u)=\{ v_1,v_2\}$ and 
$N^-_D(u)=\{ w_1,w_2\}$.
Since $N^+_D(u)$ and $N^-_D(u)$ are cyclic,
these sets are complete,
which implies that 
$m_D(v,V_{\geq 5})\geq 2$
for every vertex $v$ in 
$N_D(u)=N^+_D(u)\cup N^-_D(u)$.

\begin{figure}[h]
\centering\hfill
\unitlength 1mm 
\linethickness{0.4pt}
\ifx\plotpoint\undefined\newsavebox{\plotpoint}\fi 
\begin{picture}(35,16)(0,0)
\put(20,3){\circle*{1}}
\put(20,0){\makebox(0,0)[cc]{$u$}}
\put(15,13){\circle*{1}}
\put(25,13){\circle*{1}}
\put(17.5,8){\vector(-1,2){.07}}\multiput(20,3)(-.033557047,.067114094){149}{\line(0,1){.067114094}}
\put(22.5,8){\vector(1,2){.07}}\multiput(20,3)(.033557047,.067114094){149}{\line(0,1){.067114094}}
\put(20,13){\vector(1,0){.07}}\put(15,13){\line(1,0){10}}
\put(20,13){\vector(-1,0){.07}}\put(25,13){\line(-1,0){10}}
\put(10,13){\vector(1,0){.07}}\put(5,13){\line(1,0){10}}
\put(30,13){\vector(1,0){.07}}\put(25,13){\line(1,0){10}}
\put(15,16){\makebox(0,0)[cc]{$v_1$}}
\put(25,16){\makebox(0,0)[cc]{$v_2$}}
\put(0,3){\makebox(0,0)[cc]{$V_4$}}
\put(0,13){\makebox(0,0)[cc]{$V_{\geq 5}$}}
\put(0,8){\line(1,0){35}}
\put(17.5,8){\vector(1,-2){.07}}\multiput(15,13)(.033557047,-.067114094){149}{\line(0,-1){.067114094}}
\put(22.5,8){\vector(-1,-2){.07}}\multiput(25,13)(-.033557047,-.067114094){149}{\line(0,-1){.067114094}}
\end{picture}
\hfill
\unitlength 1mm 
\linethickness{0.4pt}
\ifx\plotpoint\undefined\newsavebox{\plotpoint}\fi 
\begin{picture}(35,16)(0,0)
\put(20,3){\circle*{1}}
\put(20,0){\makebox(0,0)[cc]{$u$}}
\put(0,3){\makebox(0,0)[cc]{$V_4$}}
\put(0,13){\makebox(0,0)[cc]{$V_{\geq 5}$}}
\put(0,8){\line(1,0){35}}
\put(20,13){\circle*{1}}
\put(10,13){\circle*{1}}
\put(30,13){\circle*{1}}
\put(15,13){\vector(1,0){.07}}\put(10,13){\line(1,0){10}}
\put(15,13){\vector(-1,0){.07}}\put(20,13){\line(-1,0){10}}
\put(25,13){\vector(1,0){.07}}\put(20,13){\line(1,0){10}}
\put(25,13){\vector(-1,0){.07}}\put(30,13){\line(-1,0){10}}
\put(10,16){\makebox(0,0)[cc]{$v_1$}}
\put(20,16){\makebox(0,0)[cc]{$v_2$}}
\put(30,16){\makebox(0,0)[cc]{$w_2$}}
\put(15,8){\vector(-1,1){.07}}\multiput(20,3)(-.0336700337,.0336700337){297}{\line(0,1){.0336700337}}
\put(25,8){\vector(-1,-1){.07}}\multiput(30,13)(-.0336700337,-.0336700337){297}{\line(0,-1){.0336700337}}
\put(20,8){\vector(0,1){.07}}\put(20,3){\line(0,1){10}}
\put(20,8){\vector(0,-1){.07}}\put(20,13){\line(0,-1){10}}
\end{picture}\hfill
\unitlength 1mm 
\linethickness{0.4pt}
\ifx\plotpoint\undefined\newsavebox{\plotpoint}\fi 
\begin{picture}(41,16)(0,0)
\put(25,3){\circle*{1}}
\put(20,0){\makebox(0,0)[cc]{$u$}}
\put(20,13){\circle*{1}}
\put(30,13){\circle*{1}}
\put(22.5,8){\vector(-1,2){.07}}\multiput(25,3)(-.033557047,.067114094){149}{\line(0,1){.067114094}}
\put(0,3){\makebox(0,0)[cc]{$V_4$}}
\put(0,13){\makebox(0,0)[cc]{$V_{\geq 5}$}}
\put(10,13){\circle*{1}}
\put(40,13){\circle*{1}}
\put(0,8){\line(1,0){40}}
\put(17.5,8){\vector(-3,2){.07}}\multiput(25,3)(-.0505050505,.0336700337){297}{\line(-1,0){.0505050505}}
\put(27.5,8){\vector(-1,-2){.07}}\multiput(30,13)(-.033557047,-.067114094){149}{\line(0,-1){.067114094}}
\put(32.5,8){\vector(-3,-2){.07}}\multiput(40,13)(-.0505050505,-.0336700337){297}{\line(-1,0){.0505050505}}
\put(15,13){\vector(1,0){.07}}\put(10,13){\line(1,0){10}}
\put(15,13){\vector(-1,0){.07}}\put(20,13){\line(-1,0){10}}
\put(35,13){\vector(1,0){.07}}\put(30,13){\line(1,0){10}}
\put(35,13){\vector(-1,0){.07}}\put(40,13){\line(-1,0){10}}
\put(10,16){\makebox(0,0)[cc]{$v_1$}}
\put(20,16){\makebox(0,0)[cc]{$v_2$}}
\put(30,16){\makebox(0,0)[cc]{$w_1$}}
\put(40,16){\makebox(0,0)[cc]{$w_2$}}
\end{picture}
\hfill $\mbox{}$
\caption{$u$ is isolated in $D[V_4]$.}
\label{fig1}
\end{figure}
First, suppose that $N^+_D(u)=N^-_D(u)$
as shown in the left of Figure \ref{fig1}.
If $\max\{ d_D(v_1),d_D(v_2)\}\geq 6$,
then one of the two vertices $v_1$ and $v_2$
sends at least $\frac{2}{3}$ charge to $u$
and the other sends a positive amount of charge to $u$, which implies $c(u)>4+\frac{2}{3}=\frac{14}{3}$.
Now, we assume that $d_D(v_1)=d_D(v_2)=5$.
Since $D$ is strongly connected and $n\geq 5$,
we may assume, by symmetry, 
that $v_1$ has an in-neighbor $x_1$ 
distinct from $u$ and $v_2$, 
and 
that $v_2$ has an out-neighbor $x_2$
distinct from $u$ and $v_1$.
Since $D$ is strongly connected,
we have $x_1\not=x_2$.
Since $m_D(x_1,\{ u,v_2\})=0$,
the cyclicity of $N_D^+(x_1)$ 
implies that $x_1\in V_{\geq 5}$.
Similarly, 
we obtain that $x_2\in V_{\geq 5}$.
Now, the vertices $v_1$ and $v_2$ 
both send exactly $\frac{1}{3}$ charge to $u$,
which implies
$c(u)=4+\frac{2}{3}=\frac{14}{3}$.

Next, suppose that $N^+_D(u)\cap N^-_D(u)=\{ v_2\}=\{ w_1\}$
as shown in the middle of Figure \ref{fig1}.
The vertex $v_2$ sends exactly 
$$\frac{2\left(d_D(v_2)-\frac{14}{3}\right)}{m_D(v_2,V_4)}\geq 
\frac{2\left(d_D(v_2)-\frac{14}{3}\right)}{d_D(v_2)-4}\stackrel{d_D(v_2)>5}{>}\frac{2}{3}$$ charge to $u$,
which implies $c(u)>\frac{14}{3}$.

Finally, suppose that $N^+_D(u)\cap N^-_D(u)=\emptyset$
as shown in the right of Figure \ref{fig1}.
If two vertices $v$ in $N_D(u)$ have degree 
$d_D(v)$ at least $6$, 
then each of these sends 
$$\frac{\left(d_D(v)-\frac{14}{3}\right)}{m_D(v,V_4)}\geq 
\frac{\left(d_D(v)-\frac{14}{3}\right)}{d_D(v)-2}\stackrel{d_D(v)\geq 6}{\geq}\frac{1}{3}$$ 
charge to $u$,
and the remaining two vertices in $N_D(u)$
send a positive amount of charge to $u$,
which implies $c(u)>\frac{14}{3}$.
Hence, we may assume that 
at most one vertex in $N_D(u)$
has degree at least $6$.

\begin{claim}\label{claim1}
If $d_D(v_1)=d_D(v_2)=5$,
then $m_D(v_1,V_4)=m_D(v_2,V_4)=1$.

If $d_D(w_1)=d_D(w_2)=5$,
then $m_D(w_1,V_4)=m_D(w_2,V_4)=1$.
\end{claim}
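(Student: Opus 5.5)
By the symmetry between out- and in-neighborhoods (reverse all arcs), it suffices to prove the first statement. So assume $d_D(v_1)=d_D(v_2)=5$. We are in the case $N^+_D(u)\cap N^-_D(u)=\emptyset$ and $u$ is isolated in $D[V_4]$. Since $N^+_D(u)=\{v_1,v_2\}$ is cyclic, $v_1$ and $v_2$ are joined by a double arc. Hence each $v_i$ already has three arcs to $\{u,v_{3-i}\}$: one from $u$ and two to and from $v_{3-i}$. The arc $uv_i$ joins $v_i$ to $u\in V_4$, so $m_D(v_i,V_4)\geq 1$. Also $v_{3-i}\in V_{\geq 5}$, because $u$ is isolated in $D[V_4]$ and $v_{3-i}$ is a neighbor of $u$. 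So it remains to show that neither of the two remaining arcs at $v_i$ goes to a vertex of $V_4$.

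Suppose, for a contradiction, that $v_1$ has an arc to or from some $x\in V_4$ with $x\neq u$. The main tool is the earlier observation that every $x\in V_4$ has its two out-neighbors joined by a double arc, and likewise its two in-neighbors.

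First, let $x$ be an in-neighbor of $v_1$. Then $N^+_D(x)=\{v_1,y\}$, and $y$ is joined to $v_1$ by a double arc. Together with $u$, $v_2$ and $x$, this gives $v_1$ at least $u,v_2,x,y$ as neighbors with multiplicities. Counting arcs at $v_1$: one with $u$, two with $v_2$, one from $x$, and two with $y$ if $y\notin\{u,v_2,x\}$. That is $6>5$, so $y\in\{u,v_2\}$ (note $y\neq x$).
\begin{itemize}
\item If $y=u$, then $xu$ is an arc, so $x\in N^-_D(u)=\{w_1,w_2\}\subseteq V_{\geq 5}$, which contradicts $x\in V_4$.
\item If $y=v_2$, then $x$ is an in-neighbor of both $v_1$ and $v_2$. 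Now $v_2$ has $u$, the double arc with $v_1$, and $x$, so it has exactly one further arc. Strong connectivity with $n\geq 5$ means the set $\{u,v_1,v_2,x\}$ must have arcs leaving it. The out-arcs of $u$ go only to $v_1,v_2$, and $x$'s out-arcs go only to $v_1,v_2$. The only remaining slots are the fifth arc at $v_1$ (its degree is now $1+2+1+\ldots$) and the fifth arc at $v_2$. We also need $N^-_D(u)=\{w_1,w_2\}$ to send arcs into $u$, and these are disjoint from $\{v_1,v_2\}$ and distinct from $x$ (since $x\in V_4$). So two arcs from $w_1,w_2$ must arrive at $u$. But $u$ has only four arcs, so this is fine; the issue is instead the exits from $\{u,v_1,v_2,x\}$ together with the in-arcs of $x$. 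I would resolve this with a degree count: $v_1$ has arcs with $u$, with $v_2$ (two), and with $x$, leaving one slot, and $v_2$ likewise. The set $\{v_1,v_2,u,x\}$ needs an out-arc to the rest of the graph, and $u$'s in-neighbors $w_1,w_2$ are outside but only send arcs into $u$. One checks that the in-neighborhood of $v_1$, namely $\{u,v_2,x\}$ plus possibly one more vertex, must be cyclic. Since $u$ and $x$ are not adjacent, this forces a specific configuration, and the contradiction is reached by showing that the in-neighbors of $x$, which must form a double arc, cannot be accommodated.
\end{itemize}

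Second, let $x$ be an out-neighbor of $v_1$. Then $N^-_D(x)=\{v_1,y\}$ with $y$ joined to $v_1$ by a double arc, and the same count gives $y\in\{u,v_2\}$. If $y=u$, then $x\in N^+_D(u)=\{v_1,v_2\}$, which is impossible. If $y=v_2$, we get the mirror of the previous subcase.

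The main obstacle is these residual subcases where $x$ is attached to both $v_1$ and $v_2$. There I plan to use the cyclicity of $N^-_D(v_1)$ (respectively $N^+_D(v_1)$) together with the exact degree $5$, and, where the arcs are exhausted, strong connectivity with $n\geq 5$, to rule out the configuration. I would first try to show that $N^{\mp}_D(v_1)$ is forced to be contained in $\{u,v_2,x\}$. Then I would check that this set is acyclic, since $u$ and $x$ are non-adjacent and no cycle passes through them. This gives the desired contradiction.
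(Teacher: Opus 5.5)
There is a genuine gap. Your setup is fine, and so is the count that takes an in-neighbor $x\in V_4$ of $v_1$ with $N^+_D(x)=\{v_1,y\}$ down to $y\in\{u,v_2\}$, together with the exclusion of $y=u$. The problem is the subcase $y=v_2$. This is exactly the configuration the claim has to rule out, you leave it unresolved, and the mechanism you propose for closing it does not work.

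In this subcase the fifth arcs at $v_1$ and $v_2$ must be out-arcs $(v_1,z)$ and $(v_2,z')$, because $d^+_D(v_1),d^+_D(v_2)\geq 2$. So $N^-_D(v_1)=\{u,v_2,x\}$. This set is \emph{not} forced to be acyclic. It is true that $u$ and $x$ are non-adjacent and $(v_2,u)\notin A(D)$, but nothing prevents a double arc between $v_2$ and $x$. In fact, cyclicity of $N^-_D(v_1)$ forces $z'=x$, and cyclicity of $N^-_D(v_2)=\{u,v_1,x\}$ forces $z=x$.

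Then $N^-_D(x)=\{v_1,v_2\}$ is a double arc, so the in-neighbors of $x$ are accommodated without trouble. Every vertex of $\{u,v_1,v_2,x\}$ satisfies its local cyclicity conditions. The contradiction is only global: no arc leaves $\{u,v_1,v_2,x\}$, which is impossible because $D$ is strongly connected and $n\geq 5$. You do remark that this set needs an exit, but you never show that the remaining arcs at $v_1$ and $v_2$ are forced back into it. Your final plan, proving $\{u,v_2,x\}$ acyclic, cannot produce the contradiction.

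The out-neighbor case with $N^-_D(x)=\{v_1,v_2\}$ is also not a literal mirror image, since reversing all arcs changes the role of $u$. The argument there runs as follows:
\begin{enumerate}
\item $N^-_D(v_1)\supseteq\{u,v_2\}$ is not a $2$-cycle, so the fifth arc at $v_1$ is an in-arc.
\item Hence $N^+_D(v_1)=\{v_2,x\}$, and its cyclicity gives a double arc between $v_2$ and $x$, which saturates $v_2$.
\item Then cyclicity of $N^+_D(v_2)=\{v_1,x\}$ gives $(x,v_1)$, producing the same closed set.
\end{enumerate}

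The paper avoids the case split altogether. Since $u$ and $v_2$ do not form a $2$-cycle, $d^-_D(v_1)=3$ and $d^+_D(v_1)=2$, and by the same argument $d^+_D(v_2)=2$. So $N^+_D(v_1)=\{v_2,x\}$ with $v_2$ and $x$ joined by a double arc, and then $N^+_D(v_2)=\{v_1,x\}$ gives a double arc between $v_1$ and $x$. Thus all five arcs at $v_1$ go to $u$, $v_2$ and $x$. If $x$ were in $V_4$, no arc would leave $\{u,v_1,v_2,x\}$.
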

\begin{proof}[Proof of Claim \ref{claim1}]
Suppose that $d_D(v_1)=d_D(v_2)=5$.
Since $v_2$ and $u$ do not form a $2$-cycle,
it follows that 
$d_D^-(v_1)=3$
and 
$d_D^+(v_1)=2$.
This implies the existence of a vertex $x$
with $N_D^+(v_1)=\{ v_2,x\}$
such that $v_2$ and $x$ form a $2$-cycle.
Since $N_D^+(v_2)=\{ v_1,x\}$,
it follows that also $v_1$ and $x$ form a $2$-cycle.
If $x\in V_4$, then $D$ is not strongly connected.
Hence, the arc $(u,v_1)$ is the only arc
between $v_1$ and $V_4$.
By symmetry, the claim follows.
\end{proof}
If exactly one vertex in $N_D(u)$ has degree at least $6$,
then, by Claim \ref{claim1},
one vertex in $N_D(u)$ sends at least $\frac{1}{3}$ charge to $u$
and two other vertices in $N_D(u)$ each send 
exactly $\frac{1}{3}$ charge to $u$,
which implies $c(u)>\frac{14}{3}$.
If no vertex in $N_D(u)$ has degree at least $6$,
then, by Claim \ref{claim1},
each vertex in $N_D(u)$ sends exactly
$\frac{1}{3}$ charge to $u$,
which implies $c(u)>\frac{14}{3}$.

\medskip

\noindent {\bf Case 2} {\it $u$ belongs to a directed cycle 
$C=u_1u_2\ldots u_\ell u_1$
that is a component in $D[V_4]$.}

\medskip

\noindent Let $v$ be the out-neighbor of $u_1$ in $V_{\geq 5}$.
The cyclicity of $N_D^+(u_1)$
implies that $u_2$ and $v$ form a $2$-cycle,
and the cyclicity of $N_D^-(u_2)$ 
implies that $u_1$ and $v$ form a $2$-cycle.
Now, 
if $\ell>2$,
then the cyclicity of $N_D^+(u_2)$ 
implies that $u_3$ and $v$ form a $2$-cycle,
if $\ell>3$,
then the cyclicity of $N_D^+(u_3)$ 
implies that $u_4$ and $v$ form a $2$-cycle,
and so forth.
Altogether, it follows inductively that 
$u_i$ and $v$ form a $2$-cycle
for every $i\in [\ell]$.

See Figure \ref{fig2} for an illustration.

\begin{figure}[h]
\centering\hfill 
\unitlength 1mm 
\linethickness{0.4pt}
\ifx\plotpoint\undefined\newsavebox{\plotpoint}\fi 
\begin{picture}(30,13.5)(0,0)
\put(15,13){\circle*{1}}
\put(10,3){\circle*{1}}
\put(20,3){\circle*{1}}
\put(12.5,8){\vector(-1,-2){.07}}\multiput(15,13)(-.033557047,-.067114094){149}{\line(0,-1){.067114094}}
\put(17.5,8){\vector(1,-2){.07}}\multiput(15,13)(.033557047,-.067114094){149}{\line(0,-1){.067114094}}
\put(15,3){\vector(1,0){.07}}\put(10,3){\line(1,0){10}}
\put(15,3){\vector(-1,0){.07}}\put(20,3){\line(-1,0){10}}
\put(10,13){\vector(1,0){.07}}\put(5,13){\line(1,0){10}}
\put(20,13){\vector(1,0){.07}}\put(15,13){\line(1,0){10}}
\put(0,3){\makebox(0,0)[cc]{$V_4$}}
\put(0,13){\makebox(0,0)[cc]{$V_{\geq 5}$}}
\put(12.5,8){\vector(1,2){.07}}\multiput(10,3)(.033557047,.067114094){149}{\line(0,1){.067114094}}
\put(17.5,8){\vector(-1,2){.07}}\multiput(20,3)(-.033557047,.067114094){149}{\line(0,1){.067114094}}
\put(0,8){\line(1,0){25}}
\put(25,8){\line(1,0){5}}
\put(30,8){\line(0,1){0}}
\end{picture}
\hfill 
\unitlength 1mm 
\linethickness{0.4pt}
\ifx\plotpoint\undefined\newsavebox{\plotpoint}\fi 
\begin{picture}(25.5,13.5)(0,0)
\put(15,13){\circle*{1}}
\put(5,3){\circle*{1}}
\put(15,6){\circle*{1}}
\put(25,3){\circle*{1}}
\put(10,13){\vector(1,0){.07}}\put(5,13){\line(1,0){10}}
\put(20,13){\vector(1,0){.07}}\put(15,13){\line(1,0){10}}
\put(0,3){\makebox(0,0)[cc]{$V_4$}}
\put(0,13){\makebox(0,0)[cc]{$V_{\geq 5}$}}
\put(0,8){\line(1,0){25}}
\put(20,8){\line(1,0){5}}
\put(25,8){\line(0,1){0}}
\put(15,9.5){\vector(0,1){.07}}\put(15,6){\line(0,1){7}}
\put(15,9.5){\vector(0,-1){.07}}\put(15,13){\line(0,-1){7}}
\put(10,8){\vector(1,1){.07}}\multiput(5,3)(.0336700337,.0336700337){297}{\line(0,1){.0336700337}}
\put(10,8){\vector(-1,-1){.07}}\multiput(15,13)(-.0336700337,-.0336700337){297}{\line(0,-1){.0336700337}}
\put(15,3){\vector(-1,0){.07}}\put(25,3){\line(-1,0){20}}
\put(20,8){\vector(-1,1){.07}}\multiput(25,3)(-.0336700337,.0336700337){297}{\line(0,1){.0336700337}}
\put(20,8){\vector(1,-1){.07}}\multiput(15,13)(.0336700337,-.0336700337){297}{\line(0,-1){.0336700337}}
\put(10,4.5){\vector(3,1){.07}}\multiput(5,3)(.11235955,.03370787){89}{\line(1,0){.11235955}}
\put(20,4.5){\vector(3,-1){.07}}\multiput(15,6)(.11235955,-.03370787){89}{\line(1,0){.11235955}}
\end{picture}
\hfill 
\unitlength 1mm 
\linethickness{0.4pt}
\ifx\plotpoint\undefined\newsavebox{\plotpoint}\fi 
\begin{picture}(35.5,13.5)(0,0)
\put(20,13){\circle*{1}}
\put(5,3){\circle*{1}}
\put(15,6){\circle*{1}}
\put(25,6){\circle*{1}}
\put(35,3){\circle*{1}}
\put(0,3){\makebox(0,0)[cc]{$V_4$}}
\put(0,13){\makebox(0,0)[cc]{$V_{\geq 5}$}}
\put(0,8){\line(1,0){35}}
\put(20,8){\line(1,0){5}}
\put(25,8){\line(0,1){0}}
\put(12.5,8){\vector(3,2){.07}}\multiput(5,3)(.0505050505,.0336700337){297}{\line(1,0){.0505050505}}
\put(12.5,8){\vector(-3,-2){.07}}\multiput(20,13)(-.0505050505,-.0336700337){297}{\line(-1,0){.0505050505}}
\put(20,3){\vector(-1,0){.07}}\put(35,3){\line(-1,0){30}}
\put(30,4.5){\vector(3,-1){.07}}\multiput(25,6)(.11235955,-.03370787){89}{\line(1,0){.11235955}}
\put(27.5,8){\vector(-3,2){.07}}\multiput(35,3)(-.0505050505,.0336700337){297}{\line(-1,0){.0505050505}}
\put(27.5,8){\vector(3,-2){.07}}\multiput(20,13)(.0505050505,-.0336700337){297}{\line(1,0){.0505050505}}
\put(20,6){\vector(1,0){.07}}\put(15,6){\line(1,0){10}}
\put(10,4.5){\vector(3,1){.07}}\multiput(5,3)(.11235955,.03370787){89}{\line(1,0){.11235955}}
\put(17.5,9.5){\vector(3,4){.07}}\multiput(15,6)(.033557047,.046979866){149}{\line(0,1){.046979866}}
\put(17.5,9.5){\vector(-3,-4){.07}}\multiput(20,13)(-.033557047,-.046979866){149}{\line(0,-1){.046979866}}
\put(22.5,9.5){\vector(-3,4){.07}}\multiput(25,6)(-.033557047,.046979866){149}{\line(0,1){.046979866}}
\put(22.5,9.5){\vector(3,-4){.07}}\multiput(20,13)(.033557047,-.046979866){149}{\line(0,-1){.046979866}}
\end{picture}
\hfill $\mbox{}$
\caption{Directed cycle components of $D[V_4]$ of lengths $2$, $3$, and $4$.}
\label{fig2}
\end{figure}

If $\ell\leq 3$, 
then the strong connectivity implies
that $v$ has an in-neighbor $x_1$
as well as an out-neighbor $x_2$
with $x_1,x_2\not\in V(C)$.
If $\ell=2$, then the cyclicity of $N_D^+(x_1)$ implies that 
$x_1\in V_{\geq 5}$ or $d_D^-(v)\geq 4$, 
and the cyclicity of $N_D^-(x_2)$ implies that 
$x_2\in V_{\geq 5}$ or $d_D^+(v)\geq 4$.
Note that $d_D(v)\geq 6$.
Furthermore, 
$d_D(v)=6$ implies that $m_D(v,V_4)\leq d_D(v)-2$, and 
$d_D(v)=7$ implies that $m_D(v,V_4)\leq d_D(v)-1$.
It follows that $v$ sends 
$$
\frac{2\left(d_D(v)-\frac{14}{3}\right)}{m_D(v,V_4)}\geq 
\frac{2}{3}$$ 
charge to $u$.
More precisely, it follows that $c(u)\geq \frac{14}{3}$
with equality if and only if 
$m_D(v,V_4)=d_D(v)-2$ and $d_D(v)=6$.
If $\ell\geq 3$, then $d_D(v)\geq 8$,
and $v$ sends 
$$
\frac{2\left(d_D(v)-\frac{14}{3}\right)}{m_D(v,V_4)}\geq 
\frac{2\left(d_D(v)-\frac{14}{3}\right)}{d_D(v)}\stackrel{d_D(v)\geq 8}{>}
\frac{2}{3}$$ 
charge to $u$,
which implies $c(u)>\frac{14}{3}$.

\medskip

\noindent {\bf Case 3} {\it $u$ belongs to a directed path 
$P=u_1u_2\ldots u_\ell$ with $\ell\geq 2$
that is a component in $D[V_4]$.}

\medskip

\noindent Let $v$ be the out-neighbor of $u_1$ in $V_{\geq 5}$.
As in Case 2, it follows that 
$u_i$ and $v$ form a $2$-cycle
for every $i\in [\ell]$.
Let $w$ be the in-neighbor of $u_1$ 
distinct from $v$.
In view of the case assumption,
we have that $w\in V_{\geq 5}$.
The cyclicity of $N_D^-(u_1)$
implies that $v$ and $w$ form a $2$-cycle.

See Figure \ref{fig3} for an illustration.

\begin{figure}[h]
\centering\hfill 
\unitlength 1mm 
\linethickness{0.4pt}
\ifx\plotpoint\undefined\newsavebox{\plotpoint}\fi 
\begin{picture}(30,15)(0,0)
\put(20,13){\circle*{1}}
\put(10,13){\circle*{1}}
\put(15,3){\circle*{1}}
\put(25,3){\circle*{1}}
\put(17.5,8){\vector(-1,-2){.07}}\multiput(20,13)(-.033557047,-.067114094){149}{\line(0,-1){.067114094}}
\put(22.5,8){\vector(1,-2){.07}}\multiput(20,13)(.033557047,-.067114094){149}{\line(0,-1){.067114094}}
\put(15,13){\vector(1,0){.07}}\put(10,13){\line(1,0){10}}
\put(0,3){\makebox(0,0)[cc]{$V_4$}}
\put(0,13){\makebox(0,0)[cc]{$V_{\geq 5}$}}
\put(17.5,8){\vector(1,2){.07}}\multiput(15,3)(.033557047,.067114094){149}{\line(0,1){.067114094}}
\put(22.5,8){\vector(-1,2){.07}}\multiput(25,3)(-.033557047,.067114094){149}{\line(0,1){.067114094}}
\put(0,8){\line(1,0){25}}
\put(30,8){\line(0,1){0}}
\put(20,3){\vector(1,0){.07}}\put(15,3){\line(1,0){10}}
\put(12.5,8){\vector(1,-2){.07}}\multiput(10,13)(.033557047,-.067114094){149}{\line(0,-1){.067114094}}
\put(15,13){\vector(-1,0){.07}}\put(20,13){\line(-1,0){10}}
\put(15,0){\makebox(0,0)[cc]{$u_1$}}
\put(20,15){\makebox(0,0)[cc]{$v$}}
\put(10,15){\makebox(0,0)[cc]{$w$}}
\put(24,8){\line(1,0){3}}
\end{picture}
\hfill 
\unitlength 1mm 
\linethickness{0.4pt}
\ifx\plotpoint\undefined\newsavebox{\plotpoint}\fi 
\begin{picture}(27,15)(0,0)
\put(15,13){\circle*{1}}
\put(5,13){\circle*{1}}
\put(5,3){\circle*{1}}
\put(15,3){\circle*{1}}
\put(25,3){\circle*{1}}
\put(10,13){\vector(1,0){.07}}\put(5,13){\line(1,0){10}}
\put(0,3){\makebox(0,0)[cc]{$V_4$}}
\put(0,13){\makebox(0,0)[cc]{$V_{\geq 5}$}}
\put(22,8){\line(0,1){0}}
\put(10,3){\vector(1,0){.07}}\put(5,3){\line(1,0){10}}
\put(20,3){\vector(1,0){.07}}\put(15,3){\line(1,0){10}}
\put(10,13){\vector(-1,0){.07}}\put(15,13){\line(-1,0){10}}
\put(5,0){\makebox(0,0)[cc]{$u_1$}}
\put(15,15){\makebox(0,0)[cc]{$v$}}
\put(5,15){\makebox(0,0)[cc]{$w$}}
\put(5,8){\vector(0,-1){.07}}\put(5,13){\line(0,-1){10}}
\put(10,8){\vector(1,1){.07}}\multiput(5,3)(.0336700337,.0336700337){297}{\line(0,1){.0336700337}}
\put(10,8){\vector(-1,-1){.07}}\multiput(15,13)(-.0336700337,-.0336700337){297}{\line(0,-1){.0336700337}}
\put(15,8){\vector(0,-1){.07}}\put(15,13){\line(0,-1){10}}
\put(15,8){\vector(0,1){.07}}\put(15,3){\line(0,1){10}}
\put(20,8){\vector(1,-1){.07}}\multiput(15,13)(.0336700337,-.0336700337){297}{\line(0,-1){.0336700337}}
\put(20,8){\vector(-1,1){.07}}\multiput(25,3)(-.0336700337,.0336700337){297}{\line(0,1){.0336700337}}
\put(0,8){\line(1,0){27}}
\end{picture}
\hfill $\mbox{}$
\caption{Directed path components of $D[V_4]$ of lengths $1$ and $2$.}
\label{fig3}
\end{figure}

Since $d_D(v)\geq 2\ell+2$,
the vertex $v$ sends 
$\frac{2}{3}$ charge to each vertex on $C$,
and the vertex $w$ sends a positive amount of charge
to $u_1$.
This implies that 
$c(u_1)>\frac{14}{3}$
and 
$c(u_i)\geq \frac{14}{3}$
for every $i\in [\ell]\setminus \{ 1\}$.

\medskip

\noindent In all three cases,
we obtain $c(u)\geq \frac{14}{3}$
for every vertex $u$ of $D$,
which implies \eqref{e1}.
Clearly, the digraphs in ${\cal D}$
have the desired properties and 
satisfy \eqref{e1} with equality.
Now, suppose that \eqref{e1} 
is satisfied with equality.
In this case,
the final charge $c(u)$
is exactly $\frac{14}{3}$
for every vertex $u$ of $D$.
This implies that 
$m_D(v,V_4)>0$ for every vertex $v$ in $V_{\geq 5}$.
By the discussion in the three cases,
each $u$ in $V_4$ 
\begin{itemize}
\item either induces a
$\overset\leftrightarrow{K}_3$
with two vertices $v_1$ and $v_2$ of degree $5$
such that 
$v_1$ has an in-neighbor in $V_{\geq 5}$
and  
$v_2$ has an out-neighbor in $V_{\geq 5}$,
\item or induces a
$\overset\leftrightarrow{K}_3$
with one vertex $u'$ of degree $4$
and one vertex $v$ of degree $6$
such that $v$ has an out-neighbor
as well as an in-neighbor in $V_{\geq 5}$.
\end{itemize}
Since $D$ is strongly-connected
and $n\geq 5$, this implies that $D\in {\cal D}$,
which completes the proof.
\end{proof}

\setcounter{claim}{0}

\begin{proof}[Proof of Theorem \ref{theorem2}]
Let $D$ be as in the statement.
Let $V_d$ and $V_{\geq d}$ be as in the proof of Theorem \ref{theorem1}.
Again, we obtain that $\delta^+(D),\delta^-(D)\geq 2$ and that $V(D)=V_{\geq 4}$.
In view of the desired statement, we may assume, for a contradiction,
that $m<\frac{8}{3}n$.

\begin{claim}\label{claim2}
If $d_D^\sigma(v)=2$ for some vertex $v$ of $D$
and some $\sigma\in \{ +,-\}$,
then $N_D^\sigma(v)\cap V_{\geq 6}\not=\emptyset$.
\end{claim}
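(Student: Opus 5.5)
The plan is to argue by contradiction, using only neighborhood-cyclicity, strong $2$-connectivity and $n\geq 9$; the assumption $m<\frac{8}{3}n$ should not be needed. By the symmetry obtained from reversing all arcs, it suffices to treat $\sigma=+$. So let $N_D^+(v)=\{a,b\}$. Cyclicity of $N_D^+(v)$ forces $a$ and $b$ to form a $2$-cycle. Suppose, for a contradiction, that $d_D(a),d_D(b)\leq 5$.

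First I will record what is already known at $a$ and $b$. The arcs $(v,a)$, $(v,b)$, $(a,b)$ and $(b,a)$ are present. Hence each of $a$ and $b$ has at most two further arc-ends, and since $\delta^+(D)\geq 2$, each has at least one further out-neighbor. The target is a \emph{small exit structure} for the set $S=\{v,a,b\}$. Since $d_D^+(v)=2$, every arc leaving $S$ starts at $a$ or $b$. Strong $2$-connectivity with $n\geq 9$ requires that arcs leave $S$ towards at least two distinct vertices outside $S$; otherwise deleting the unique exit target would leave no out-arc from $S$. Similarly, $D-v$ must still enter $\{a,b\}$, so some vertex $z\notin S$ has an arc into $\{a,b\}$.

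The main step is a short case analysis on $d_D^+(a)\in\{2,3\}$, and symmetrically on $d_D^+(b)$, using cyclicity of $N_D^+(a)$, $N_D^+(b)$, $N_D^-(a)$ and $N_D^-(b)$.

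\emph{Case $d_D^+(a)=2$.} Then $N_D^+(a)=\{b,x\}$, and cyclicity forces $b$ and $x$ to form a $2$-cycle. So $b$ uses all five of its arc-ends on $v,a,x$, and all arcs leaving $S$ go to $x$, except possibly a fifth arc at $a$. That arc cannot be an out-arc, since $d_D^+(a)=2$. Hence $x$ is the only exit target, and $D-x$ is not strongly connected, which is a contradiction since $n\geq 9$.

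\emph{Case $d_D^+(a)=3$ (and likewise $d_D^+(b)=3$).} Then $d_D(a)=5$ forces $N_D^-(a)=\{v,b\}$, and these must form a $2$-cycle. This requires the arc $(b,v)$, which is impossible if $v$ has only $a,b$ as out-neighbors and \ldots; more precisely, it uses one of $b$'s two spare arc-ends on $v$. With the remaining single spare arc-end of $b$ and the cyclicity of $N_D^+(a)=\{b,x,y\}$, I will show that either $\{x,y\}$ is a $2$-cycle not touching $b$, or $b$ is joined to $x$ or $y$. In each subcase I will check that either the in-arcs into $\{a,b\}$ from outside $S$ come from a single vertex, or the out-arcs of $S$ go to a single vertex. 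That single vertex is then a cut vertex, again contradicting strong $2$-connectivity.

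I expect the main obstacle to be this last case. There the degree budget at $a$ and $b$ is spread across $v$, $x$ and $y$, and one must carefully use both in- and out-neighborhood cyclicity of $a$ and $b$ (and possibly of $v$, via $N_D^-(v)$) to pin the structure down to a one-vertex separator. I will first try to show that $d_D^+(a)=d_D^+(b)=3$ is impossible outright by a degree count. I will then handle the mixed case by reducing it to the argument of the case $d_D^+(a)=2$ with the roles of $a$ and $b$ exchanged.
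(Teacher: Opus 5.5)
Your overall plan works, in the form stated in your last paragraph, and it splits the cases differently from the paper. However, two steps in the body fail as written.

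\textbf{First gap: the subcase $x=v$.} In the case $d_D^+(a)=2$ you tacitly assume $x\neq v$. If $x=v$, the forced $2$-cycle is between $b$ and $v$. Then $b$ has only four arc-ends inside $S=\{v,a,b\}$ and one spare, so ``$b$ uses all five of its arc-ends on $v,a,x$'' and ``$x$ is the only exit target'' are both false (indeed $x\in S$). The repair is short:
\begin{itemize}
\item now $S$ induces $\overset\leftrightarrow{K}_3$;
\item all out-arcs of $v$ and of $a$ stay inside $S$;
\item so at most one arc, the spare one at $b$, leaves $S$, and its head (if any) separates $S$ from the rest.
\end{itemize}

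\textbf{Second gap: the criterion in your middle paragraph.} You claim that if all in-arcs into $\{a,b\}$ from outside $S$ come from one vertex $z$, then $z$ is a cut vertex. This is wrong. $D-z$ still contains $(v,a)$ and $(v,b)$, and $v$ may have many in-neighbours outside $S$. Only the complete absence of such in-arcs gives a contradiction, with $v$ as the cut vertex.

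Your subcases for $N_D^+(a)=\{b,x,y\}$ also omit $v\in\{x,y\}$, which is exactly the configuration that survives. If $b$'s spare arc enters from some $z\notin S$, then $N_D^+(b)=\{a,v\}$ forces $(a,v)\in A(D)$. The cyclicity of $N_D^+(a)$ then gives no information, because of the $v$--$b$ $2$-cycle. The actual contradiction is that the third out-arc of $a$ is the only arc leaving $S$. But this is just the case $d_D^+(b)=2$.

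So discard the middle paragraph and do what your last paragraph says:
\begin{itemize}
\item If $d_D^+(a)=d_D^+(b)=3$, then $N_D^-(a)=\{v,b\}$ and $N_D^-(b)=\{v,a\}$, so in $D-v$ no arc enters $\{a,b\}$.
\item Otherwise, apply the repaired out-degree-$2$ argument to whichever of $a,b$ has out-degree $2$.
\end{itemize}

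\textbf{Comparison with the paper.} With these repairs, your proof and the paper's use the same ingredients but a different split. The paper splits on whether $v$ has an arc back to $w_1$ or $w_2$.
\begin{itemize}
\item If not, cyclicity of $N_D^-(w_1)$ forces $d_D^+(w_1)=2$. The argument is then exactly your subcase $x\neq v$, and there $x\neq v$ holds automatically.
\item If so, $\{v,w_1,w_2\}$ induces $\overset\leftrightarrow{K}_3$. A single degree count plus strong $2$-connectivity then covers both your subcase $x=v$ and your case $d_D^+(a)=d_D^+(b)=3$.
\end{itemize}
The paper's split never meets the degenerate subcase you missed. Yours reduces the case $d_D^+(a)=d_D^+(b)=3$ to a one-line in-degree count that does not even need cyclicity.
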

\begin{proof}[Proof of Claim \ref{claim2}]
By symmetry, we may assume that $N_D^+(v)=\{ w_1,w_2\}$.
Suppose, for a contradiction, that $d_D(w_1),d_D(w_2)\leq 5$.
Since $N_D^+(v)$ is cyclic, 
there is a double arc between $w_1$ and $w_2$.
First, suppose that $w_1,w_2\not\in N_D^-(v)$.
Since there is no double arc between the two in-neighbors $v$ and $w_2$ of $w_1$,
it follows that $d_D^-(w_1)=3$ and $d_D^+(w_1)=2$.
It follows that 
there is a vertex $x$ with $N_D^+(w_1)=\{ w_2,x\}$ 
such that $w_2$ and $x$ are joined by a double arc.
Now, we have $N_D^+(w_2)=\{ w_1,x\}$, 
which implies that $w_1$ and $x$ are joined by a double arc.
In $D-x$ no arc leaves $\{ v,w_1,w_2\}$,
which is a contradiction.
Next, suppose that $w_2\in N_D^-(v)$, that is,
there is a double arc between $v$ and $w_2$.
Since $N_D^+(w_2)$ or $N_D^-(w_2)$ equals $\{ v,w_1\}$,
there is a double arc between $v$ and $w_1$.
Now, the set
$\{ v,w_1,w_2\}$ induces $\overset\leftrightarrow{K}_3$, and,
since $d_D(v),d_D(w_1),d_D(w_2)\leq 5$,
either at most one arc leaves $\{ v,w_1,w_2\}$ in $D$
or at most one arc enters $\{ v,w_1,w_2\}$ in $D$.
This implies that $D$ is not strongly $2$-connected,
which is a contradiction.
It follows that $N_D^+(v)$ contains a vertex $w$ with $d_D(w)\geq 6$, which completes the proof of the claim.
\end{proof}

\begin{claim}\label{claim3}
Each component of $D[V_4]$ is an isolated vertex or a directed path.
If the directed path $P=u_0u_1\ldots u_\ell$ with $\ell\geq 1$ 
is a component of $D[V_4]$, 
then there are vertices $v$, $x$, and $y$ in $V_{\geq 5}$
such that 
$V(P)\cup \{ x,y\}\subseteq N_D^+(v)\cap N_D^-(v)$,
$u_0\in N_D^+(x)$, and 
$u_\ell\in N_D^-(y)$.
Note that $x=y$ is possible.
Moreover, we have $d_D(v)\geq 2\ell+6$.
\end{claim}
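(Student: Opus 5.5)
The plan is to reuse the structural analysis from the proof of Theorem \ref{theorem1}, strengthened by Claim \ref{claim2} and strong $2$-connectivity.

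\emph{Structure of $D[V_4]$.} As in the proof of Theorem \ref{theorem1}, I first show $\Delta^+(D[V_4]),\Delta^-(D[V_4])\leq 1$. Indeed, if $u\in V_4$ had both out-neighbors in $V_4$, this would contradict Claim \ref{claim2}, since $N_D^+(u)$ must meet $V_{\geq 6}$. So every component of $D[V_4]$ is an isolated vertex, a directed path, or a directed cycle.

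\emph{Excluding cycles.} Suppose $C=u_1\ldots u_\ell u_1$ is a cycle component. The inductive argument of Case 2 gives a vertex $v$ joined to every $u_i$ by a double arc. Then each $u_i$ has all its arcs inside $V(C)\cup\{v\}$, so $v$ separates $V(C)$ from the rest of $D$ (the rest is nonempty because $n\geq 9$). This contradicts strong $2$-connectivity.

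\emph{Path components.} Let $P=u_0u_1\ldots u_\ell$ with $\ell\geq 1$ be a path component, and let $v$ be the out-neighbor of $u_0$ outside $P$. As in Case 3, cyclicity of $N^+_D(u_0)$, $N^-_D(u_1)$, $N^+_D(u_1)$, and so on, shows inductively that $v$ forms a $2$-cycle with every $u_i$. Define $x$ as the in-neighbor of $u_0$ other than $v$, and $y$ as the out-neighbor of $u_\ell$ other than $v$. Both lie in $V_{\geq 5}$ because $P$ is a whole component of $D[V_4]$. Cyclicity of $N_D^-(u_0)=\{x,v\}$ gives a double arc between $x$ and $v$, and cyclicity of $N_D^+(u_\ell)=\{v,y\}$ gives a double arc between $v$ and $y$. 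Hence $V(P)\cup\{x,y\}\subseteq N_D^+(v)\cap N_D^-(v)$, $u_0\in N^+_D(x)$, and $u_\ell\in N^-_D(y)$.

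\emph{Degree bound on $v$.} So far $v$ has at least $2(\ell+1)+2|\{x,y\}|$ arcs to these vertices. Since $\{v\}$ is not a separator, there must be further neighbors of $v$ outside $V(P)\cup\{x,y\}$, or, when $x=y$, further arcs of $x$. I expect this counting to be the main obstacle, because $d_D(v)\geq 2\ell+6$ needs two extra arcs beyond $2\ell+4$ when $x\neq y$, and four extra when $x=y$.

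When $x=y$: the vertex $x$ has out-neighbor $u_0$ and in-neighbor $u_\ell$, and $\{x,v\}$ separates $P$ from the rest. Strong $2$-connectivity then forces another route leaving and entering $V(P)\cup\{x\}$ avoiding $v$, which can only go through $x$. That alone does not raise $d_D(v)$. So I would argue instead that $D-x$ must remain strong, meaning $v$ needs an out-neighbor and an in-neighbor outside $V(P)\cup\{x\}$. I would then apply neighborhood-cyclicity to such a neighbor $z$. If $z$ had $v$ as its only neighbor among these vertices, the cyclicity of $N^+_D(z)$ or $N^-_D(z)$ would force further double arcs at $v$. This should give the two extra out-arcs and in-arcs needed.

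When $x\neq y$: the same "$D-x$ and $D-y$ strong" reasoning should produce at least two further arcs at $v$. I would organise this step as a short case analysis on whether the external neighbors of $v$ coincide and whether they are joined to $v$ by double arcs, using Claim \ref{claim2} where a degree-$2$ side appears.
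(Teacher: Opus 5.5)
\textbf{Gap 1: excluding cycle components.} The justification you give is false. You say the rest of $D$ is nonempty ``because $n\geq 9$'', but $\vec{C}_n^*$ with $n\geq 9$ is strongly $2$-connected and neighborhood-cyclic. In it, $D[V_4]$ is a directed cycle $C$ with $V(D)=V(C)\cup\{v\}$. So the first sentence of the claim does not follow from the hypotheses of Theorem~\ref{theorem2} alone. It needs the standing assumption $m<\frac{8}{3}n$ of the proof by contradiction, which you never invoke. The correct conclusion from your separation observation is that $V(D)=V(C)\cup\{v\}$. Hence $m=3n-3\geq\frac{8}{3}n$ because $n\geq 9$, a contradiction. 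This is where $n\geq 9$ is actually used.

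\textbf{Gap 2: the degree count for $v$.} Your count is off by two, because $|V(P)|=\ell+1$. The double arcs from $v$ to $V(P)\cup\{x,y\}$ give $2\ell+2+2|\{x,y\}|$ arcs. That is already $2\ell+6$ when $x\neq y$, so nothing is left to prove there, and $2\ell+4$ when $x=y$, so only two more arcs are needed. Because of the miscount you set out to prove statements that fail in general. For $x\neq y$, $v$ can have $N_D(v)=V(P)\cup\{x,y\}$ exactly. For $x=y$, $v$ can have just one out-arc and one in-arc leaving $V(P)\cup\{x\}$. Your step ``cyclicity of $N^\pm_D(z)$ forces further double arcs at $v$'' has no basis, since the cycle witnessing cyclicity of $N^-_D(z)$ need not pass through $v$.

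\textbf{What suffices.} For $x=y$, your first observation is all you need. All arcs at vertices of $P$ lie inside $V(P)\cup\{v,x\}$. Moreover, $x\in V_{\geq 5}$ has only four arcs to $V(P)\cup\{v\}$, so it has a neighbour outside this set. Strong connectivity of $D-x$ then gives $v$ an out-neighbour and an in-neighbour outside $V(P)\cup\{x\}$, so $d_D(v)\geq 2\ell+6$. This is exactly the paper's argument.
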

\begin{proof}[Proof of Claim \ref{claim3}]
As in Theorem \ref{theorem1}, it follows that $\Delta^+(D[V_4]),\Delta^-(D[V_4])\leq 1$,
which implies that each component of $D[V_4]$ is 
an isolated vertex, 
or a directed cycle, 
or a directed path.
Suppose, for a contradiction, 
that some directed cycle $C=u_1\ldots u_\ell u_1$ is a component of $D[V_4]$.
Since $D$ is neighborhood-cyclic,
there is a vertex $v$ such that $V(C)\subseteq N_D^+(v)\cap N_D^-(v)$.
Since in $D-v$ no arc leaves or enters $V(C)$,
we obtain that $V(D)=V(C)\cup \{ v\}$, and
$m=3n-3\geq \frac{8}{3}n$, which is a contradiction.

Now, suppose that some directed path $P=u_0u_1\ldots u_\ell$ with $\ell\geq 1$ 
is a component of $D[V_4]$.
Since $D$ is neighborhood-cyclic,
there is a vertex $v$ such that $V(P)\subseteq N_D^+(v)\cap N_D^-(v)$.
Furthermore, 
if $x$ and $y$ are such that $N_D^-(u_0)=\{ v,x\}$ and $N_D^+(u_\ell)=\{ v,y\}$,
then $\{ x,y\}\subseteq N_D^+(v)\cap N_D^-(v)$.
If $x\neq y$, then $d_D(v)\geq 2\ell +6$.
If $x=y$, then,
since $D-x$ is strongly connected, 
we obtain that $d_D(v)\geq 2\ell+6$, 
which completes the proof of the claim.
\end{proof}

Consider the following discharging procedure:\\[-8mm]

\begin{quote}
{\it 
Assign to each vertex $u$ of $D$ an initial charge of $d_D(u)$.
For every vertex $u$ in $V_4$, 
set ${\rm demand}(u,+)$ and ${\rm demand}(u,-)$ equal to $\frac{2}{3}$.
For every vertex $u$ in $V_5$ and $\sigma\in \{ +,-\}$ such that $d_D^\sigma(u)=2$, 
set ${\rm demand}(u,\sigma)$ equal to $\frac{1}{3}$.
Let 
$$P=\Big\{ (u,\sigma):u\in V_{\leq 5},\sigma\in \{ +,-\},\mbox{ and }d_D^\sigma(u)=2\Big\}$$ 
and declare all pairs in $P$ {\it demanding}.
We now redistribute the charge in four phases.

{\bf Phase 1}\\
As long as there are five distinct vertices $x$, $y$, $u$, $v$, and $w$ 
as well as $\sigma\in \{ +,-\}$ 
such that 
$x,y\in V_4$, 
$u\in V_5$,
$v\in V_6$,
$w\in V_{\geq 7}$,
$\{ u,v,w\}$ induces $\overset\leftrightarrow{K}_3$,
$N_D^\sigma(x)=\{ u,v\}$, 
$N_D^{-\sigma}(y)=\{ v,w\}$, and 
the three pairs $(x,\sigma)$, $(u,\sigma)$, and $(y,-\sigma)$ are demanding,
see Figure \ref{figspecial1} for an illustration,
\begin{figure}[H]
\begin{center}
\unitlength 1mm 
\linethickness{0.4pt}
\ifx\plotpoint\undefined\newsavebox{\plotpoint}\fi 
\begin{picture}(20.5,15)(0,0)
\put(10,13){\circle*{1}}
\put(20,13){\circle*{1}}
\put(0,13){\circle*{1}}
\put(5,3){\circle*{1}}
\put(15,3){\circle*{1}}
\put(10,13){\line(-1,-2){5}}
\put(5,3){\line(1,0){10}}
\put(15,3){\line(-1,2){5}}
\put(10,15){\makebox(0,0)[cc]{$v$}}
\put(5,1){\makebox(0,0)[cc]{$u$}}
\put(15,1){\makebox(0,0)[cc]{$w$}}
\put(0,13){\line(1,0){20}}
\put(20,13){\line(-1,-2){5}}
\put(5,3){\line(-1,2){5}}
\put(0,13){\vector(1,-2){3}}
\put(0,13){\vector(1,0){5}}
\put(10,13){\vector(1,0){5}}
\put(15,3){\vector(1,2){3}}
\put(0,15){\makebox(0,0)[cc]{$x$}}
\put(20,15){\makebox(0,0)[cc]{$y$}}
\put(5,3){\vector(1,2){3}}
\put(8,9){\vector(-1,-2){.07}}\multiput(10,13)(-.03333333,-.06666667){60}{\line(0,-1){.06666667}}
\put(12,9){\vector(1,-2){.07}}\multiput(10,13)(.03333333,-.06666667){60}{\line(0,-1){.06666667}}
\put(15,3){\vector(-1,2){3}}
\put(5,3){\vector(1,0){5}}
\put(15,3){\vector(-1,0){5}}
\end{picture}
\end{center}
\caption{The configuration in Phase 1 with $\sigma=+$.
Reversing all orientations corresponds to $\sigma=-$.}
\label{figspecial1}
\end{figure}
\begin{itemize}
\item $v$ sends ${\rm demand}(x,\sigma)=\frac{2}{3}$ charge to $x$.
\item $w$ sends ${\rm demand}(u,\sigma)=\frac{1}{3}$ charge to $u$.
\item $w$ sends ${\rm demand}(y,-\sigma)=\frac{2}{3}$ charge to $y$.
\item Declare the three pairs $(x,\sigma)$, $(u,\sigma)$, and $(y,-\sigma)$ {\it not demanding}.
\end{itemize}
{\bf Phase 2}\\
As long as there are four distinct vertices $x$, $u$, $v$, and $w$ 
as well as $\sigma\in \{ +,-\}$ 
such that 
$x\in V_4$, 
$u\in V_5$,
$v,w\in V_6$,
there is a single arc between $x$ and $v$,
$\{ u,v,w\}$ induces $\overset\leftrightarrow{K}_3$,
$N_D^\sigma(x)=\{ u,v\}$, and 
the two pairs $(x,\sigma)$ and $(u,\sigma)$ are demanding,
see Figure \ref{figspecial2} for an illustration,
\begin{figure}[H]
\begin{center}
\unitlength 1mm 
\linethickness{0.4pt}
\ifx\plotpoint\undefined\newsavebox{\plotpoint}\fi 
\begin{picture}(15.5,15)(0,0)
\put(10,13){\circle*{1}}
\put(0,13){\circle*{1}}
\put(5,3){\circle*{1}}
\put(15,3){\circle*{1}}
\put(10,13){\line(-1,-2){5}}
\put(5,3){\line(1,0){10}}
\put(15,3){\line(-1,2){5}}
\put(10,15){\makebox(0,0)[cc]{$v$}}
\put(5,1){\makebox(0,0)[cc]{$u$}}
\put(15,1){\makebox(0,0)[cc]{$w$}}
\put(5,3){\line(-1,2){5}}
\put(0,13){\vector(1,-2){3}}
\put(0,13){\vector(1,0){5}}
\put(0,15){\makebox(0,0)[cc]{$x$}}
\put(0,13){\line(1,0){10}}
\put(5,3){\vector(1,2){3}}
\put(8,9){\vector(-1,-2){.07}}\multiput(10,13)(-.03333333,-.06666667){60}{\line(0,-1){.06666667}}
\put(12,9){\vector(1,-2){.07}}\multiput(10,13)(.03333333,-.06666667){60}{\line(0,-1){.06666667}}
\put(15,3){\vector(-1,2){3}}
\put(5,3){\vector(1,0){5}}
\put(15,3){\vector(-1,0){5}}
\end{picture}\end{center}
\caption{The configuration in Phase 2 with  with $\sigma=+$.}
\label{figspecial2}
\end{figure}
\begin{itemize}
\item $v$ sends ${\rm demand}(x,\sigma)=\frac{2}{3}$ charge to $x$.
\item $w$ sends ${\rm demand}(u,\sigma)=\frac{1}{3}$ charge to $u$.
\item Declare the two pairs $(x,\sigma)$ and $(u,\sigma)$ {\it not demanding}.
\end{itemize}
{\bf Phase 3}\\
For every demanding pair $(u,\sigma)$ with $d_D(u)=4$,
Claim \ref{claim2} implies
$N_D^\sigma(u)\cap V_{\geq 6}\not=\emptyset$,
and we proceed as follows:
\begin{itemize}
\item If $N_D^\sigma(u)\cap V_{\geq 6}=\{ v\}$, 
then $v$ sends ${\rm demand}(u,\sigma)=\frac{2}{3}$ charge to $u$.
\item If $N_D^\sigma(u)\cap V_{\geq 6}=\{ v,w\}$, then
\begin{eqnarray*}
\mbox{ $v$ sends }&  \frac{d_D(v)-5}{d_D(v)+d_D(w)-10}\cdot {\rm demand}(u,\sigma) & \mbox{ charge to $u$ and}\\ 
\mbox{ $w$ sends }& \frac{d_D(w)-5}{d_D(v)+d_D(w)-10}\cdot {\rm demand}(u,\sigma) & \mbox{ charge to $u$.}
\end{eqnarray*}
\item Declare the pair $(u,\sigma)$ {\it not demanding}.
\end{itemize}
{\bf Phase 4}\\
For every demanding pair $(u,\sigma)$ with $d_D(u)=5$,
Claim \ref{claim2} implies
$N_D^\sigma(u)\cap V_{\geq 6}\not=\emptyset$,
and we proceed as follows:

Let $N^\sigma_D(u)=\{v,w\}$.
\begin{itemize}
\item If $d_D(v)=d_D(w)$, then 
$v$ and $w$ each send $\frac{1}{2}{\rm demand}(u,\sigma)=\frac{1}{6}$ charge to $u$. 
\item If $d_D(v)>d_D(w)$, then 
$v$ sends ${\rm demand}(u,\sigma)=\frac{1}{3}$ charge to $u$.
\item If $d_D(v)<d_D(w)$, then 
$w$ sends ${\rm demand}(u,\sigma)=\frac{1}{3}$ charge to $u$.
\item Declare the pair $(u,\sigma)$ {\it not demanding}.
\end{itemize}
}
\end{quote}
Whenever a vertex $u$ sends $c\cdot {\rm demand}(v,\sigma)$
charge to a vertex $v$ 
for some $c\in(0,1]$, 
the charge is considered to be sent along 
a unique arc determined by $\sigma$: namely, along $(v,u)$ if $\sigma =+$, and along $(u,v)$ if $\sigma=-$.
For brevity, we simply say that $u$ 
is sending charge along the corresponding arc.
Let $c(u)$ be the final charge of a vertex $u$ in $D$ after all phases of the discharging.
Clearly, at that point every pair in $P$ is declared \emph{not demanding}. 
Note that only vertices in $V_{\geq 6}$
send charge to neighbors.
Moreover, every vertex $u$ in $V_4$ satisfies 
$$c(u)=4+{\rm demand}(u,+) +{\rm demand}(u,-)=4+\frac{2}{3}+\frac{2}{3}=\frac{16}{3}.$$ 
Similarly, for every vertex $u$ in $V_5$ and $\sigma\in \{+,-\}$ such that $d_D^\sigma(u)=2$, we have 
$$c(u)=5+{\rm demand}(u,\sigma)=5+\frac{1}{3}=\frac{16}{3}.$$ 

\begin{claim}\label{claim4}
If $u$ is a vertex in $V_6$, 
then $c(u)\geq \frac{16}{3}$.
\end{claim}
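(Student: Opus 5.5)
The plan is to bound the total charge that a fixed vertex $u\in V_6$ sends out, since $c(u)=6-(\text{charge sent})$ and we need to show that $u$ sends at most $\frac{2}{3}$. First I would record the possible amounts. In Phase 1 or Phase 2, a vertex of degree $6$ can only play the role of $v$, in which case it sends $\frac{2}{3}$ to $x$. In Phase 3 it sends $\frac{2}{3}$ if it is the unique vertex of $V_{\geq 6}$ in $N_D^\sigma(x)$. Otherwise it sends $\frac{1}{d_D(w)-4}\cdot\frac{2}{3}\leq\frac{1}{3}$, with equality only if $d_D(w)=6$. In Phase 4 it sends $\frac{1}{3}$ if the other $\sigma$-neighbor has degree at most $5$, and $\frac{1}{6}$ if that neighbor also has degree $6$.

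The key structural observation is the following. Whenever $u$ sends charge along an arc to a receiver $y$ for a pair $(y,\sigma)$, we have $N_D^\sigma(y)=\{u,z\}$. Since $N_D^\sigma(y)$ is cyclic, $u$ and $z$ are joined by a double arc. So every transfer uses one arc of $u$ (the arc to $y$) together with a double arc at $u$ to a \emph{partner} $z$, and several transfers may share the same partner. Because $d_D(u)=6$, $u$ has at most three partners, and the arcs to receivers and partners together number at most $6$. I would split into cases by the number of partners and the degrees of the partners. I would also use the obvious fact that a receiver cannot coincide with a partner from the same transfer.

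The easy cases are those in which all transfers from $u$ are ``cheap'', that is, $\frac{1}{3}$ or $\frac{1}{6}$ and at most two of them, or a single transfer of $\frac{2}{3}$. The work is in excluding, or showing to be already resolved, the configurations where $u$ would send more than $\frac{2}{3}$. These arise when $u$ sends $\frac{2}{3}$ to some $x\in V_4$ and additionally sends to a second receiver, or when $u$ sends $\frac{1}{3}$ to three receivers. In such a configuration the partner $z$ of an expensive transfer has degree at most $5$, and together with $u$ and the receivers it lies in a small, densely joined set. I would then apply neighborhood-cyclicity to $z$ and to the receivers (which have degree $4$ or $5$), together with Claim \ref{claim2} and Claim \ref{claim3}, to pin down the remaining arcs at $u$. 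Next I would invoke strong $2$-connectivity: if the set $\{u,z\}\cup\{\text{receivers}\}$ has at most one arc leaving it, or at most one arc entering it, this is a contradiction, exactly as in the proof of Claim \ref{claim2}.

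I expect the main obstacle to be the configurations that survive this argument. These are precisely the ones with a $\overset\leftrightarrow{K}_3$ on $\{z,u,w\}$ with $z\in V_5$ and $w\in V_{\geq 6}$, where $u$ would owe $\frac{2}{3}$ to $x$ and $w$ or $u$ would also owe $\frac{1}{3}$ to $z$. For these I would check that they match the patterns of Phase 1 ($d_D(w)\geq 7$) and Phase 2 ($d_D(w)=6$, with a single arc between $x$ and $u$). In those phases the extra $\frac{1}{3}$ is charged to $w$ rather than to $u$, so $u$ pays only the $\frac{2}{3}$ to $x$. Afterwards the relevant pairs are no longer demanding and cost $u$ nothing in Phases 3 and 4. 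The delicate part is to verify that after Phases 1 and 2 no other combination of demanding pairs around $u$ remains, including the case of a double arc between $x$ and $u$, which is excluded from Phase 2. There I would argue directly that $x$, $u$, $z$ form too tight a structure, which forces a $1$-cut or a $\overset\leftrightarrow{K}_3$ with at most one arc leaving or entering it, contradicting strong $2$-connectivity.
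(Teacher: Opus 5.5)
There are two genuine gaps, plus a third shortfall, even though your architecture matches the paper's. You bound the outflow of $u$ and note that a transfer to $y$ for $(y,\sigma)$ with $N_D^\sigma(y)=\{u,z\}$ forces a double arc between $u$ and $z$. You then split by these double-arc neighbors and their degrees, which is the paper's Cases 1--3 by the number of double arcs at $u$, and let Phases 1 and 2 absorb the $\frac{2}{3}+\frac{1}{3}$ conflicts.

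\textbf{Phase 2 is misread.} In Phase 2 \emph{both} $v$ and $w$ lie in $V_6$. So a vertex $u\in V_6$ can act as the $w$ of Phase 2 and pay the full $\frac{1}{3}$ to the degree-$5$ vertex of the $\overset\leftrightarrow{K}_3$, rather than the $\frac{1}{6}$ it would pay in Phase 4. Your closing step, that ``the extra $\frac{1}{3}$ is charged to $w$ rather than to $u$'', therefore only moves that charge onto another vertex of $V_6$ in Phase 2. You never check that this vertex still ends at $\frac{16}{3}$. The paper handles exactly this in Case 2, with $d_D(v_1)=5$, $d_D(v_2)=6$ and $v_1,v_2$ joined by a double arc. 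There it uses strong $2$-connectivity to show that $u$ sends charge to at most one vertex besides $v_1$. A $\frac{2}{3}$-transfer to a vertex of $V_4$ is then recognized as the Phase 2 pattern with $u$ in the role of $v$.

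\textbf{The list of over-budget configurations is incomplete.} Your proposed tool also does not reach the missing ones. Even with no transfer of $\frac{2}{3}$ and no three transfers of $\frac{1}{3}$, $u$ can a priori exceed $\frac{2}{3}$. Suppose $v$ is the only double-arc neighbor of $u$, $d_D(v)=7$, and $w_1,\ldots,w_4\in V_4$ satisfy $N_D^{\sigma_i}(w_i)=\{u,v\}$. Then Phase 3 charges $u$ with $4\cdot\frac{2}{3}\cdot\frac{1}{3}=\frac{8}{9}$. The partner here has degree $7$, so your recipe (a partner of degree at most $5$, then strong $2$-connectivity of a small dense set) does not apply. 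The missing ingredient is a lower bound on the degree of a shared partner in terms of how many receivers it serves. In this setting, three receivers force $d_D(v)\geq 7$ and four force $d_D(v)\geq 8$ (the paper's Case 1).

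The paper gets this from cyclicity of a neighborhood of $v$, not from connectivity. Suppose $d_D^\sigma(v)=3$ and $w_1,w_2$ are receivers with $N_D^{-\sigma}(w_i)=\{u,v\}$. Then $N_D^\sigma(v)=\{u,w_1,w_2\}$. The arcs between $u$ and the $w_i$ are single and point from $u$ to $w_i$ in direction $\sigma$, so $u$ lies on no cycle of this set. The only possible cycle is then a double arc between $w_1$ and $w_2$, which contradicts $N_D^{-\sigma}(w_1)=\{u,v\}$.

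\textbf{The case analysis is not done.} The actual proof is the case analysis by the number of double arcs at $u$ and the degrees of those neighbors. Your proposal only announces it and does not carry it out.
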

\begin{proof}[Proof of Claim \ref{claim4}]
Since $6-\frac{2}{3}=\frac{16}{3}$,
we may assume that $u$ sends charge 
along at least two arcs,
which implies that there is 
at least one neighbor $v$ of $u$ 
such that $u$ and $v$ 
are joined by a double arc.

\medskip

\noindent {\bf Case 1.} {\it $u$ is incident with exactly one double arc.}

\medskip 

\noindent Let $D$ contain a double arc between $u$ and $v$.
First observe that $u$ sends no charge 
in Phase 1 or Phase 2.
Next, suppose, for a contradiction, 
that $d_D(v)\leq 5$.
Let $\sigma$ in $\{+,-\}$ be such that $d_D^\sigma(v)=2$.
Let $N_D^\sigma(v)=\{u,w\}$. 
Since $D$ is neighborhood-cyclic, 
the vertices $u$ and $w$ are joined by a double arc, which is a contradiction. 
It follows that $d_D(v)\geq 6$.
Moreover, we have that $d_D^+(v)\geq 3$ and $d_D^-(v)\geq 3$, and 
if $u$ sends charge to a neighbor $w$, 
then $N_D^\sigma(w)=\{u,v\}$ for some $\sigma\in \{ +,-\}$.
By the definition of Phase 3 and Phase 4, 
the vertex $u$ sends at most ${\rm demand}(w,\sigma)\cdot \frac{d_D(u)-5}{d_D(u)+d_D(v)-10}$ charge to $w$.
If there are at most two pairs $(w,\sigma)$
served by $u$, 
then $c(u)\geq 6-2\cdot \frac{2}{3}\cdot \frac{1}{2}=\frac{16}{3}$.
If $u$ sends charge to exactly three neighbors $w_1$, $w_2$, and $w_3$, then we claim that $d_D(v)\geq 7$.
Suppose, for a contradiction, that $d_D(v)=6$.
Since $d_D^+(v)=d_D^-(v)=3$, there is $\sigma$ in $\{+,-\}$ such that, by symmetry, 
$N^\sigma_D(v)=\{u,w_1,w_2\}$
and $w_1,w_2\in N^\sigma_D(u)$. 
Since $N_D^\sigma(v)$ is cyclic, the vertices $w_1$ and $w_2$ are joined by a double arc, 
which is a contradiction to $d_D^{-\sigma}(w_1)=2$. 
Hence, we have $d_D(v)\geq 7$ and $c(u)\geq 6-3\cdot \frac{2}{3}\cdot \frac{6-5}{6+7-10}=\frac{16}{3}$.
If $u$ sends charge to exactly four neighbors $w_1$, $w_2$, $w_3$, and $w_4$, then we claim that $d_D(v)\geq 8$. 
Suppose, for a contradiction, that $d_D(v)\leq 7$.
Let $\sigma$ in $\{+,-\}$ be such that $d_D^\sigma(v)=3$.
Since $D$ is neighborhood-cyclic and strongly $2$-connected, 
it follows that $d_D^+(u)=d_D^-(u)=3$.
By symmetry, we may assume that $N_D^\sigma(u)=\{v,w_1,w_2\}$.
It holds that $N_D^\sigma(v)=\{ u,w_1,w_2\}$.
Since $N_D^\sigma(v)$ is cyclic, the vertices $w_1$ and $w_2$ are joined by a double arc, 
which is a contradiction to $d_D^{-\sigma}(w_1)=2$.
Hence, we have $d_D(v)\geq 8$ and $c(u)\geq 6-4\cdot \frac{2}{3}\cdot \frac{6-5}{6+8-10}=\frac{16}{3}$.

\medskip

\noindent {\bf Case 2.} {\it $u$ is incident with exactly two double arcs.}

\medskip 

\noindent Let $D$ contain the two double arcs between $u$ and $v_1$ and between $u$ and $v_2$.
By symmetry, we assume that $d_D(v_1)\leq d_D(v_2)$. 
Furthermore, let $w_1$ and $w_2$ be the other two neighbors of $u$.

First, let $d_D(v_1)=4$.
In this case $v_1$ and $v_2$ are joined by a double arc. 
Since $D-v_2$ is strongly connected, we have $d_D^+(u)=d_D^-(u)=3$.
Since $D-u$ is strongly connected, we have $d_D(v_2)\geq 6$.
If $u$ sends charge to neither $w_1$ nor $w_2$, then $c(u)\geq 6-2\cdot \frac{2}{3}\cdot \frac{6-5}{6+6-10}=\frac{16}{3}$.
If $u$ sends charge to both $w_1$ and $w_2$, then $d_D(w_1)\leq 5$ and $d_D(w_2)\leq 5$. 
Let $\sigma$ in $\{+,-\}$ be such that $N_D^\sigma(w_1)=\{u,v_2\}$ and $N_D^{-\sigma}(w_2)=\{u,v_2\}$. 
Since $D-w_1$ and $D-w_2$ are strongly connected, it follows that $d_D(v_2)\geq 8$. 
Thus, we have $c(u)\geq 6-4\cdot \frac{2}{3}\cdot\frac{6-5}{6+8-10}=\frac{16}{3}$.
If $u$ sends charge to $w_1$ but not to $w_2$, then $d_D(w_1)\leq 5$. 
Since $N_D^\sigma(w_1)=\{u,v_2\}$ for some $\sigma\in \{ +,-\}$, and $D-w_1$, $D-w_2$, and $D-v_2$ are strongly connected, 
it follows that $d_D(v_2)\geq 7$. 
In this case, we have $c(u)\geq 6-3\cdot\frac{2}{3}\cdot \frac{6-5}{6+7-10}=\frac{16}{3}$.
Hence, we may assume $d_D(v_1)\geq 5$.

Now, let $d_D(v_1)=5$ and $d_D(v_2)=5$.
Since $d_D(v_1)=5$, the vertices $v_1$ and $v_2$ are joined by a single or double arc. 
First, suppose that $v_1$ and $v_2$ are joined by a single arc.
It follows that $d_D^+(u)=d_D^-(u)=3$. 
By symmetry, we may assume $(v_1,v_2)\in A(D)$.
Let $N_D^-(u)=\{v_1,v_2,w_1\}$ and $N_D^+(u)=\{v_1,v_2,w_2\}$. 
Since $N_D^-(u)$, $N_D^-(v_1)$, and $N_D^+(v_2)$ are cyclic, it follows that $v_1v_2w_1v_1$ is a directed cycle in $D$. 
Since $N_D^+(u)$, $N_D^-(v_1)$, and $N_D^+(v_2)$ are cyclic, it follows that $v_1v_2w_2v_1$ is a directed cycle in $D$. 
Since $D-w_2$ is strongly connected and $N_D^-(w_1)$ is cyclic, it follows that $d_D(w_1)\geq 6$. 
Since $D-w_1$ is strongly connected and $N_D^+(w_2)$ is cyclic, it follows that $d_D(w_2)\geq 6$. 
Hence, if $v_1$ and $v_2$ are joined by a single arc, then we have $c(u)\geq 6-\frac{1}{3}-\frac{1}{3}=\frac{16}{3}$.
Assume now that $v_1$ and $v_2$ are joined by a double arc.
Since $D$ is strongly 2-connected, 
there is no vertex distinct from $v_1$ and $v_2$
to which $u$ sends charge. 
Hence, we have $c(u)\geq 6-{\rm demand}(v_1,\sigma)-{\rm demand}(v_2,-\sigma)=6-\frac{1}{3}-\frac{1}{3}=\frac{16}{3}$.

Now, let $d_D(v_1)=5$ and $d_D(v_2)\geq 6$. 

First, suppose that $v_1$ and $v_2$ are not joined by a double arc.
It follows that $d_D^+(u)=3$ and $d_D^-(u)=3$, and that $v_1$ and $v_2$ are joined by a single arc. 
By symmetry, we may assume that $d_D^+(v_1)=2$ and, hence, $(v_1,v_2)\in A(D)$.
Let $N_D^-(u)=\{w_1,v_1,v_2\}$ and $N^+_D(u)=\{w_2,v_1,v_2\}$.
Suppose that $d_D(w_1)=4$. 
Note that $w_1$ and $v_1$ are not joined by a double arc, since $d_D^+(v_1)=2$. 
If $w_1$ and $v_2$ are joined by a double arc, then $d_D(v_2)\geq 7$ and $c(u)\geq 6-\frac{2}{3}\cdot 2\cdot \frac{6-5}{6+7-10}>\frac{16}{3}$. Hence, we may assume that $w_1$ and $v_2$ are not joined by a double arc.
Since $N_D^-(u)$ is cyclic, it follows therefore that $w_1v_1v_2w_1$ is a directed cycle in $D$. 
Let $N_D^-(w_1)=\{v_2,x\}$. 
Note that $x=w_2$ is possible. 
Let $N_D^-(v_1)=\{u,w_1,y\}$. 
Note that $y\neq v_2$.
Since $N_D^-(v_1)$ is cyclic, it follows that $w_1uyw_1$ is a directed cycle in $D$. 
This implies that $x=y=w_2$.
Since $N_D^-(w_1)$ is cyclic, the vertices $v_2$ and $w_2$ are joined by a double arc. 
Furthermore, since $D$ is strongly $2$-connected, we obtain that $d_D(v_2)\geq 7$ and $d_D(w_2)\geq 6$. 
It follows that $c(u)\geq 6-{\rm demand}(w_1,-)=\frac{16}{3}$.
Hence, we may now assume that $d_D(w_1)\geq 5$.
Suppose that $d_D(w_2)=4$.
Since $d_D^+(v_1)=2$, we have that $N_D^-(w_2)=\{u,v_2\}$.
If $(w_2,v_1)\in A(D)$ and $x$ is such that $N_D^+(w_2)=\{v_1,x\}$, 
then $v_1$ and $x$ are joined by a double arc, which is a contradiction to $d_D^+(v_1)=2$ or $v_1$ and $v_2$ are joined by a single arc. 
Hence, we have that $(w_2,v_1)\notin A(D)$.
Since $N_D^+(u)$ is cyclic, it follows that $w_2$ and $v_2$ are joined by a double arc.
Let $N_D^+(w_2)=\{v_2,x\}$. 
Note that $x\neq v_1$.
This implies that $v_2$ and $x$ are joined by a double arc.
It follows that $d_D(v_2)\geq 7$,
which implies $c(u)\geq 6-{\rm demand}(w_1,+)-{\rm demand}(w_2,+)\cdot \frac{6-5}{6+7-10}\geq6-\frac{1}{3}-\frac{2}{3}\cdot \frac{1}{3}\geq \frac{16}{3}$. 
Hence, we may now assume that $d_D(w_1)\geq 5$ and $d_D(w_2)\geq 5$.
If $u$ sends charge to at most one of $w_1$ and $w_2$, 
then we have $c(u)\geq 6-\frac{1}{3}-{\rm demand}(v_1,+)\geq \frac{16}{3}$.
Thus, we may assume that $u$ sends charge to both $w_1$ and $w_2$,
which implies $d_D(w_1)=5$ and $d_D(w_2)=5$.
Again, it follows that $N_D^-(w_2)=\{u,v_2\}$, since $d_D^+(v_1)=2$. 
Hence, we obtain $c(u)\geq 6-{\rm demand}(w_1,+)-\frac{1}{2}{\rm demand}(v_1,+)-\frac{1}{2}{\rm demand}(w_2,-)=\frac{16}{3}$.

Next, suppose that $v_1$ and $v_2$ are joined by a double arc.
First, suppose that $d_D(v_2)=6$.
Since $D$ is strongly $2$-connected, there is at most one vertex $w$ other than $v_1$ to which $u$ sends charge.
If such a vertex $w$ does not exist or $d_D(w)=5$, then $c(u)\geq \frac{16}{3}$.
Hence, we may assume that such a vertex $w$ exists and that $d_D(w)=4$.
In view of Phase 2, it follows that $c(u)\geq 6-\frac{2}{3}=\frac{16}{3}$.
Hence, we may assume that $d_D(v_2)\geq 7$.
Note that $u$ sends no charge to $v_1$, since $d_D(v_2)>d_D(u)$.
There are at most two vertices to which $u$ sends charge. 
In view of Phase 1, it now follows easily that $c(u)\geq \frac{16}{3}$.

Finally, let $d_D(v_1)\geq 6$ and $d_D(v_2)\geq 6$. 
Note that $u$ sends at most ${\rm demand}(w,\sigma)\cdot \frac{6-5}{6+6-10}$ charge to any neighbor $w$,
which implies $c(u)\geq 6-2\cdot \frac{2}{3}\cdot \frac{6-5}{6+6-10}\geq \frac{16}{3}$.

\medskip

\noindent {\bf Case 3.} {\it $u$ is incident with three double arcs.}

\medskip 

\noindent Let $N_D^+(u)=N_D^-(u)=\{ v_1,v_2,v_3\}$.

Assume first that $v_1$ and $v_2$ are joined by a double arc. 

If $d_D(v_1)=4$, then $d_D(v_2)\geq 6$. 
If $u$ sends no charge to $v_3$, we have $c(u)\geq 6-2\cdot \frac{2}{3}\cdot \frac{6-5}{6+6-10}=\frac{2}{3}$.
Hence, suppose, for a contradiction, that $u$ sends charge to $v_3$.
Let $\sigma$ in $\{+,-\}$ be such that $d_D^\sigma(v_3)=2$.
We have $N_D^\sigma(v_3)=\{u,v_2\}$.
But now $D-v_2$ is not strongly connected, which is a contradiction.

If $d_D(v_1)=5$ and $d_D(v_2)=5$, then we claim that $d_D(v_3)\geq 6$. 
Suppose not, that is, $d_D(v_3)\leq 5$. 
Let $\sigma$ in $\{+,-\}$ be such that $d_D^\sigma(v_3)=2$. 
By symmetry, we may assume that $N_D^\sigma(v_3)=\{u,v_1\}$. 
But now, since $d_D(v_2)=5$, it follows that $D-v_3$ is not strongly connected, which is a contradiction. 
Hence, we have $d_D(v_3)\geq 6$ and $c(u)\geq 6-2\cdot\frac{1}{3}=\frac{16}{3}$.

If $d_D(v_1)=5$ and $d_D(v_2)\geq 6$, then we claim that $d_D(v_3)\geq 5$. 
Indeed, if $d_D(v_3)=4$, then $N_D^\sigma(v_3)=\{u,v_2\}$ and $N_D^{-\sigma}(v_3)=\{u,v_i\}$ 
for some $\sigma\in \{+,-\}$ and $i\in \{1,2\}$. 
In all cases, the graph $D-v_2$ is not strongly connected, which is a contradiction. 
Thus, we have $d_D(v_3)\geq 5$ and $c(u)\geq 6-\frac{1}{6}-\frac{1}{3}\geq \frac{16}{3}$.

If $d_D(v_1)\geq 6$ and $d_D(v_2)\geq 6$, then $d_D(v_3)\geq 5$ or $d_D(v_3)=4$,
and $N_D^\sigma(v_3)=\{u,v_i\}$ in the latter case.
We have $c(u)\geq \min \left\{6-\frac{1}{3}, 6-2\cdot\frac{2}{3}\cdot \frac{6-5}{6+6-10}\right\}\geq \frac{16}{3}$.

Now assume 
that there is no double arc in $\{ v_1,v_2,v_3\}$ and 
that $v_1v_2v_3v_1$ is a directed cycle in $D$. 
If $d_D(v_1)=4$, then, since $D$ is strongly $2$-connected, we have that $d_D(v_2)\geq 6$ and $d_D(v_3)\geq 6$,
and, hence, $c(u)\geq 6-2\cdot \frac{2}{3}\cdot \frac{6-5}{6+6-10}\geq \frac{16}{3}$. 
Hence, by symmetry, we may assume that $d_D(v_1),d_D(v_2),d_D(v_3)\geq 5$.
Since $D$ is strongly $2$-connected, it is not possible that $d_D(v_1)=d_D(v_2)=d_D(v_3)=5$. 
Therefore, we obtain $c(u)\geq 6-\frac{1}{3}-\frac{1}{3}\geq \frac{16}{3}$.

This completes the proof of the claim.
\end{proof}

\begin{claim}\label{claim5}
If $u$ is a vertex in $V_7$, 
then $c(u)\geq \frac{16}{3}$. 
\end{claim}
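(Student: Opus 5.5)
We need $c(u)\geq \frac{16}{3}$ for $u\in V_7$, so $u$ may give away at most $\frac{5}{3}$ in total. The plan is to bound the charge leaving $u$ along each arc and then count the arcs that carry charge.

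\textbf{Per-arc bounds.} Charge leaves $u$ only along arcs to neighbors $w\in V_{\leq 5}$ with a demanding pair $(w,\sigma)$, where $u\in N_D^\sigma(w)$. Across all phases, $u$ sends at most $\frac{2}{3}$ along such an arc when $w\in V_4$, and at most $\frac{1}{3}$ when $w\in V_5$. In Phase 3, if the other vertex $v$ of $N_D^\sigma(w)$ also lies in $V_{\geq 6}$, the proportional rule limits $u$'s share to $\frac{2}{3}\cdot\frac{2}{2+d_D(v)-5}$. This is at most $\frac{4}{9}$ when $d_D(v)=6$ and at most $\frac{1}{3}$ when $d_D(v)\geq 7$.

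\textbf{Counting charge-carrying arcs.} Let $w$ be a neighbor with $d_D^\sigma(w)=2$ and $u\in N_D^\sigma(w)$. Neighborhood-cyclicity forces a double arc between $u$ and the other vertex $v$ of $N_D^\sigma(w)$. So every charge-carrying arc is "supported" by a double-arc partner $v$ of $u$, with $w\in N_D^{-\sigma}(u)\cap N_D^{-\sigma}(v)$. Since $d_D(u)=7$, the vertex $u$ is incident with at most three double arcs.

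\textbf{Case split on the number $k\in\{1,2,3\}$ of double arcs at $u$,} mirroring Claim~\ref{claim4}. When $k=1$, we have $d_D^+(u),d_D^-(u)\geq 3$ and at most five arcs can carry charge. Take the partner $v$. If $d_D(v)\leq 5$, then (as in Case 1 of Claim~\ref{claim4}) the cyclicity of $N_D^\sigma(v)$ gives a contradiction, so $d_D(v)\geq 6$. Each served $w$ then receives at most $\frac{2}{3}\cdot\frac{2}{d_D(v)-3}$ from $u$. Each served $w$ lies in a common neighborhood of $u$ and $v$, and cyclicity forbids two served vertices from lying together in $N_D^\sigma(v)$, as in Claim~\ref{claim4}. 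Using strong $2$-connectivity ($D-v$ strongly connected) in the same way, I would show $d_D(v)$ grows with the number $t$ of served vertices, roughly $d_D(v)\geq t+5$. The needed bound is then $t\cdot\frac{4}{3(t+2)}\leq\frac{5}{3}$, i.e.\ $4t\leq 5t+10$, which always holds.

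When $k\geq 2$, a partner $v_i$ with $d_D(v_i)\leq 5$ can itself receive at most $\frac{1}{3}$ from $u$. Vertices of degree 4 adjacent to $u$ via double arcs are handled as in Case~2 of Claim~\ref{claim4}: they force a neighbor of high degree, and the $2$-connectivity arguments there carry over. The configurations of Phases 1 and 2 matter only when $u$ plays the role of $w\in V_{\geq 7}$ in Phase 1. There $u$ pays $\frac{1}{3}+\frac{2}{3}=1$ for one configuration, but it then sends nothing further to $u_{\rm Phase1}$ and $y$ in Phases 3--4. The extra slack of one unit of degree compared with $V_6$ ($7-\frac{16}{3}=\frac{5}{3}$ instead of $\frac{2}{3}$) should absorb these cases.

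The main obstacle is the subcase with two or three double arcs to low-degree partners, combined with several degree-4 neighbors each demanding $\frac{2}{3}$ while the co-supporting vertex has degree exactly $6$. Here the naive per-arc bounds give about $3\cdot\frac{4}{9}+\frac{1}{3}>\frac{5}{3}$. I would first try to exclude this with strong $2$-connectivity, deleting the common high-degree partner and observing that a small set such as $\{u,w,\ldots\}$ becomes separated. Failing that, I would refine the bound on $d_D(v)$ via the cyclicity of $N_D^{\pm}(v)$, exactly as in Case~1 of Claim~\ref{claim4}.
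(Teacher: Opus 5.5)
Your plan is the same as the paper's: split on the number $k$ of double arcs at $u$ and bound the charge per arc. However, for $k\in\{2,3\}$, where almost all of the work lies, you give no argument. ``The extra slack \dots should absorb these cases'' and ``I would first try \dots'' leave the claim unproved, and in these cases the per-arc bounds really do exceed the budget $\frac53$. The obstacle you single out is not the real one, since $3\cdot\frac49+\frac13=\frac53$ is exactly the budget. Your remark that a partner $v_i$ with $d_D(v_i)\le 5$ receives at most $\frac13$ is false when $d_D(v_i)=4$. Configurations that actually have to be excluded include the following.

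(i) $d_D(v_1)=4$, so that $v_1v_2$ is a double arc. If $d_D(v_2)=6$, then $u$ pays $\frac49$ on each of the two pairs $(v_1,\pm)$ and $\frac49$ to each of two further vertices $w_1,w_2\in V_4$ with $N^{\sigma_i}_D(w_i)=\{u,v_2\}$. That is $\frac{16}{9}$ in total. The paper excludes this by showing that $d_D(v_2)=6$ would make $D-w_1$ or $D-w_2$ not strongly connected, so $d_D(v_2)\ge 7$.

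(ii) $d_D(v_1)=5$. Now $v_1$ can itself co-support a served $w\in V_4$ with $N^\sigma_D(w)=\{u,v_1\}$. Then $u$ is the only vertex of $V_{\ge 6}$ in $N^\sigma_D(w)$ and pays the full $\frac23$, with no proportional sharing. This comes on top of the $\frac13$ it may owe $v_1$ in Phase~4, e.g.\ $\frac13+\frac23+2\cdot\frac49=\frac{17}{9}$. The paper needs genuine structure here. If $v_1v_2$ is a double arc, $v_1$ has only one remaining arc, so there is at most one such $w$. If $(v_1,v_2)$ is a single arc, cyclicity of $N^-_D(v_1)$ forbids two vertices with $N^+_D(w_1)=N^+_D(w_2)=\{u,v_1\}$. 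Further case work using cyclicity of $N^\pm_D(u)$ and strong $2$-connectivity then controls the remaining $w_i$.

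(iii) In the Phase~1 situation you still have to prove that $u$ serves no other neighbor. This follows by counting the arcs at the $V_5$- and $V_6$-vertex of the triangle. It is needed because two more served arcs at up to $\frac23$ each would break the budget.

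The case $k=3$ is not treated at all.

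For $k=1$ your outline can be completed, but the stated reason is wrong. Two served vertices may lie in the same out-neighborhood of $v$ once it has size at least $4$; the contradiction in Claim~\ref{claim4} uses $d^\sigma_D(v)=3$. What cyclicity really gives is the following. Let $t_+$ (resp.\ $t_-$) be the number of vertices served along out-arcs (resp.\ in-arcs) of $u$. If $u$ serves $w$ along $(u,w)$, then $N^-_D(w)=\{u,v\}$, so inside $D[N^+_D(v)]$ the only in-neighbor of $w$ is $u$. Hence a directed cycle there contains at most one served vertex, and it cannot consist of $u$ and $w$ alone because $uw$ is a single arc. Thus $d^+_D(v)\ge t_++2$, and symmetrically $d^-_D(v)\ge t_-+2$, so $d_D(v)\ge t+4$. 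Therefore $u$ sends at most $t\cdot\frac23\cdot\frac{2}{t+1}<\frac43$ in total, with no appeal to $2$-connectivity; the Phase~4 amounts also obey this per-vertex bound. Your assertion $d^+_D(u),d^-_D(u)\ge 3$ is unjustified, but it is not needed.
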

\begin{proof}[Proof of Claim \ref{claim5}]
Since 
$7-2\cdot \frac{2}{3}\geq \frac{16}{3}$,
we may assume that 
$u$ sends charge along at least three arcs.
Again, there is at least one neighbor $v$ of $u$ such that $u$ and $v$ are joined by a double arc.

\medskip

\noindent {\bf Case 1.} {\it $u$ is incident with exactly one double arc.}

\medskip

\noindent Let $D$ contain a double arc between $u$ and $v$.
First observe that $u$ sends no charge in Phase 1 and Phase 2.
Next, suppose, for a contradiction, that $d_D(v)\leq 5$.
Let $\sigma$ in $\{+,-\}$ be such that $d_D^\sigma(v)=2$.
Let $N_D^\sigma(v)=\{u,w\}$. 
Since $D$ is neighborhood-cyclic, 
the vertices $u$ and $w$ are joined by a double arc, which is a contradiction. 
It follows that $d_D(v)\geq 6$.
Moreover, we have that $d_D^+(v)\geq 3$ and $d_D^-(v)\geq 3$, and 
if $u$ sends charge to a neighbor $w$, 
then $N_D^\sigma(w)=\{u,v\}$ for some $\sigma\in \{ +,-\}$.
By the definition of Phases 3 and 4,
the vertex $u$ sends at most ${\rm demand}(w,\sigma)\cdot \frac{d_D(u)-5}{d_D(u)+d_D(v)-10}$ charge to $w$.
If $u$ sends charge to at least three neighbors 
$w_1$, $w_2$, and $w_3$, 
then we claim that $d_D(v)\geq 7$.
Suppose, for a contradiction, that $d_D(v)\leq 6$.
By symmetry, let $\sigma\in\{+,-\}$ such that $w_1,w_2\in N_D^\sigma(u)$. 
This implies that $N_D^\sigma(v)=\{u,w_1,w_2\}$ as $d_D^\sigma(v)=3$.
Since $N_D^\sigma(v)$ is cyclic, 
it follows that $w_1$ and $w_2$ are joined by a double arc. 
This implies that $d_D^{-\sigma}(w_1)\geq 3$ and $d_D^{-\sigma}(w_2)\geq 3$ which contradicts
that $u$ sends charge to $w_1$ and $w_2$.
Hence, we have $d_D(v)\geq 7$ and $c(u)\geq 7-5\cdot \frac{2}{3}\cdot \frac{7-5}{7+7-10}\geq \frac{16}{3}$.

\medskip

\noindent {\bf Case 2.} {\it $u$ is incident to exactly two double arcs.}

\medskip

\noindent Let $D$ contain the two double arcs between $u$ and $v_1$ and between $u$ and $v_2$. By symmetry, we assume that $d_D(v_1)\leq d_D(v_2)$. Furthermore, let $w_1,w_2$, and $w_3$ be the other three neighbors of $u$. 

First, assume that $d_D(v_1)=4$. 
In this case, the vertices $v_1$ and $v_2$ are joined by a double arc. 
Since $D-u$ is strongly connected, we have $d_D(v_2)\geq 6$. 
If $u$ sends charge to $w_1$ but not to $w_2$ and $w_3$, 
then $c(u)\geq 7-2\cdot\frac{2}{3}\cdot \frac{7-5}{7+6-10}-\frac{2}{3}\cdot \frac{7-5}{7+6-10}\geq\frac{16}{3}$.
If $u$ sends charge to $w_1$ and $w_2$, 
we claim that $d_D(v_2)\geq 7$.
Suppose, for a contradiction, that $d_D(v_2)\leq 6$.
Since $D-u$ is strongly connected,
we obtain that $d_D^+(v_2)=d_D^-(v_2)=3$,
$N_D^\sigma(w_1)=\{u,v_2\}$, and 
$N_D^{-\sigma}(w_2)=\{u,v_2\}$
for some $\sigma$ in $\{ +,-\}$.
Since $d_D(u)=7$, it follows that $D-w_1$ or $D-w_2$ is not strongly connected,
which is a contradiction.
Hence, we have $d_D(v_2)\geq 7$ and $c(u)\geq 7-5\cdot \frac{2}{3}\cdot \frac{7-5}{7+7-10}\geq \frac{16}{3}$.

Now, assume that $d_D(v_1)=5$ and $d_D(v_2)=5$.
If $v_1$ and $v_2$ are joined by a double arc, 
the vertex $u$ sends charge to at most one vertex of $w_1,w_2,$ and $w_3$ 
as $D$ is strongly $2$-connected. 
In this case, we have $c(u)\geq 7-2\cdot \frac{1}{3}-\frac{2}{3}\geq \frac{16}{3}$.
Hence, we may assume that $v_1$ and $v_2$ are not joined by a double arc. 
This implies that $d_D^+(u),d_D^-(u)\geq 3$.
Since $d_D(v_1)=5$, the vertices $v_1$ and $v_2$ are joined by a single arc.
By symmetry, we may assume that $(v_1,v_2)\in A(D)$.
It follows that $d^+_D(v_1)=2$ and $d_D^-(v_2)=2$.
If $u$ sends charge to neither $w_2$ nor $w_3$, 
then $c(u)\geq 7-2\cdot \frac{1}{3}-\frac{2}{3}\geq \frac{16}{3}$.
Suppose that $u$ sends charge to $w_1$ and $w_2$.
If $w_1,w_2\in N_D^-(u)$, then $N_D^+(w_1)=N_D^+(w_2)=\{u,v_1\}$. 
Since $N_D^+(u)=\{v_1,v_2,w_3\}$ is cyclic and $d_D^-(v_2)=2$, 
it holds that $v_1v_2w_3v_1$ is a directed cycle in $D$,
which implies $d_D(v_1)\geq 6$, a contradiction to $d_D(v_1)=5$.
Similarly, it follows that $w_1$ and $w_2$ are not both in $N_D^+(u)$.
This already implies that $u$ sends no charge to $w_3$.
Hence, we assume that $w_1\in N_D^-(u)$ and $w_2\in N_D^+(u)$. 
By symmetry, we may assume that $d_D^-(u)=3$. 
Since $N_D^-(u)$ is cyclic and $d_D^+(v_1)=2$, 
it follows that $v_1v_2w_1v_1$ is a directed cycle in $D$. 
Suppose, for a contradiction, that $d_D(w_1)=4$. 
Since $N_D^-(w_1)=\{v_2,x\}$ is cyclic, we obtain $d_D(v_2)\geq 6$, which is a contradiction to $d_D(v_2)=5$.
Hence, we have $d_D(w_1)\geq 5$ and 
$c(u)\geq 7-3\cdot\frac{1}{3}-\frac{2}{3}=\frac{16}{3}$.

Now, assume that $d_D(v_1)=5$ and $d_D(v_2)=6$. 

\noindent First, assume that $v_1$ and $v_2$ are joined by a double arc.
If $u$ sends charge in Phase 1, 
then there are vertices $x$ and $y$ such that $N_D^\sigma(x)=\{v_1,v_2\}$ and $N_D^{-\sigma}(y)=\{u,v_2\}$. 
Note that $u$ sends ${\rm demand}(y,-\sigma)$ charge to $y$ and ${\rm demand}(v_1,-\sigma)$ to $v_1$.
Moreover, the vertex $u$ sends no charge to any other neighbor.
We have $c(u)\geq 7-\frac{2}{3}-\frac{1}{3}\geq \frac{16}{3}$.
If $u$ sends no charge in Phase 1, 
then $u$ sends charge to $v_1$ and at most two other neighbors in Phases 3 and 4 as $D$ is strongly $2$-connected.
This implies that $c(u)\geq 7-\frac{1}{3}-2\cdot \frac{2}{3}\geq\frac{16}{3}$.

\noindent Now, assume that $v_1$ and $v_2$ are not joined by a  double arc. 
Since $d_D(v_1)=5$, the vertices $v_1$ and $v_2$ are joined by a single arc.
By symmetry, we may assume that $(v_1,v_2)\in A(D)$.
Since $N_D^+(u)$ and $N_D^-(u)$ are cyclic, 
we have $d_D^+(u)\geq 3$ and $d_D^-(u)\geq 3$. 
Assume first $d_D^-(u)=3$.
Let $N_D^-(u)=\{v_1,v_2,w_1\}$ and $N_D^+(u)=\{v_1,v_2,w_2,w_3\}$.

It follows that $w_1$ and $v_2$ are joined by a double arc or $v_1v_2w_1v_1$ is a directed cycle in $D$. 
First, we assume that $w_1$ and $v_2$ are joined by a double arc.
Let $i\in\{2,3\}$.
If $u$ sends charge to $w_i$, 
then $N_D^-(w_i)=\{u,v_2\}$ since $d_D^+(v_1)=2$.
As $d_D(v_2)=6$, it follows that $u$ sends charge to at most one of $w_2$ and $w_3$,
and we have $c(u)\geq 7-\frac{1}{3}-2\cdot \frac{2}{3}\geq \frac{16}{3}$.
Now, we assume that 
there is no double arc
between $w_1$ and $v_2$
but that 
$v_1v_2w_1v_1$ is a directed cycle in $D$.
Suppose, for a contradiction, that $d_D(w_1)=4$.
Let $N_D^-(w_1)=\{v_2,x\}$. 
The vertices $v_2$ and $x$ are joined by a double arc. 
Let $N_D^-(v_1)=\{u,w_1,y\}$.
Note that $y\neq v_2$.
Since $N_D^-(v_1)$ is cyclic and $d_D(w_1)=4$, we have $x=y=w_2$.
It follows that $D-w_2$ is not strongly connected, 
which is a contradiction.
Hence, we have $d_D(w_1)\geq 5$.
Consider the case $d_D(w_2)=d_D(w_3)=4$.
Since $d_D^+(v_1)=2$, we have $N_D^-(w_2)=\{v_2,u\}$ and $N_D^-(w_3)=\{v_2,u\}$.
Hence, we have $c(u)\geq 7-\frac{1}{3}-\frac{1}{3}-2\cdot \frac{2}{3}\cdot \frac{7-5}{7+6-10}\geq \frac{16}{3}$.
Finally, if $d_D(w_1)\geq 5$ and $d_D(w_2)\geq 5$, 
then we have $c(u)\geq 7-3\cdot\frac{1}{3}-\frac{2}{3}\geq \frac{16}{3}$. 
Now, assume that $d_D^-(u)=4$. Let $N_D^-(u)=\{w_1,w_2,v_1,v_2\}$ and $N_D^+(u)=\{v_1,v_2,w_3\}$.
It follows that $v_2$ and $w_3$ are joined by a double arc or $v_1v_2w_3v_1$ is a directed cycle in $D$. 
First, assume that $v_2$ and $w_3$ are joined by a double arc.
If $u$ sends no charge to $w_3$, then $c(u)\geq 7-\frac{1}{3}-2\cdot \frac{2}{3}\geq \frac{16}{3}$.
Now, assume that $u$ sends charge to $w_3$. 
Note that $d_D^-(w_3)=2$.
Suppose, for a contradiction, that $u$ sends charge to $w_1$ and $w_2$ and  $N_D^+(w_1)=N_D(w_2)=\{u,v_1\}$.
The cyclicity of $N_D^-(v_1)$ implies that $w_1$ and $w_2$ are joined by a double arc, which contradicts $d_D^+(w_1)=2$ and $d_D^+(w_2)=2$.
Next, suppose, for a contradiction, that $u$ sends charge to $w_1$ and $w_2$ and $N_D^+(w_1)=\{u,v_2\}$.
Since $d_D(v_2)=6$, it follows that $N_D^+(w_2)=\{u,v_1\}$.
Moreover, since $d_D^+(v_1)=2$ and $d_D(v_2)=6$, the cyclicity of $N_D^-(u)$ implies that $d_D^+(w_1)\geq 3$ or $d_D^+(w_2)\geq 3$, which is a contradiction to $d_D^+(w_1)=2$ and $d_D^+(w_2)=2$.
Hence, the vertex $u$ sends charge to at most one of $w_1$ and $w_2$,
and we have $c(u)\geq 7-\frac{1}{3}-2\cdot \frac{2}{3}\geq \frac{16}{3}$.
Now, we assume that 
there is no double arc 
between $v_2$ and $w_3$
but that 
$v_1v_2w_3v_1$ is a directed cycle in $D$.
Suppose, for a contradiction, that $d_D(w_3)=4$. 
Let $N_D^+(w_3)=\{v_1,x\}$. 
The vertices $v_1$ and $x$ are joined by a double arc. 
This implies that $d_D(v_1)\geq 6$, which is a contradiction to $d_D(v_1)=5$.
Hence, we have $d_D(w_3)\geq 5$.
Suppose that $d_D(w_1)=d_D(w_2)=4$. 
Since $v_1v_2w_3v_1$ is a directed cycle in $D$ and $d_D(v_1)=5$, it holds that $N_D^+(w_1)=\{u,v_2\}$ or $N_D^+(w_2)=\{u,v_2\}$. 
If $N_D^+(w_1)=N_D^+(w_2)=\{u,v_2\}$, then $c(u)\geq 7-2\cdot\frac{2}{3}\cdot \frac{7-5}{7+6-10}-\frac{1}{3}-\frac{1}{3}\geq \frac{16}{3}$.
By symmetry, we may assume that $N_D^+(w_1)=\{u,v_1\}$ and $N_D^+(w_2)=\{u,v_2\}$. 
Since $N_D^-(v_1)=\{w_1,u,w_3\}$ is cyclic, it follows that $(w_3,w_1)\in A(D)$. 
Let $N_D^-(w_1)=\{w_3,x\}$.
If $x=v_2$, then $D-w_3$ is not strongly connected, which is a contradiction.
If $x\neq v_2$, then $x$ and $w_3$ are joined by a double arc. 
This implies that $d_D(w_3)\geq 6$. 
Hence, $c(u)\geq 7-\frac{2}{3}-\frac{2}{3}\cdot \frac{7-5}{7+6-10}-\frac{1}{3}\geq \frac{16}{3}$.
If $d_D(w_1)\geq 5$ or $d_D(w_2)\geq 5$, we have $c(u)\geq 7-\frac{2}{3}-3\cdot\frac{1}{3}\geq \frac{16}{3}$.

Next, we assume that $d_D(v_1)=5$ and $d_D(v_2)\geq 7$.
Consider first that $v_1$ and $v_2$ are joined by a double arc. 
There is at most one neighbor $w$ of $u$ such that $N_D^\sigma(w)=\{u,v_1\}$ and hence, 
we have $c(u)\geq 7-\frac{2}{3}-\frac{1}{6}-2\cdot \frac{2}{3}\cdot\frac{7-5}{7+7-10}\geq \frac{16}{3}$.
Consider now the case that $v_1$ and $v_2$ are only joined by a single arc.
By symmetry, we may assume $(v_1,v_2)\in A(D)$.
Note that this implies that $d_D^+(u)\geq 3$ and $d_D^-(u)\geq 3$.
If $u$ sends charge to at most two of $w_1,w_2$, and $w_3$, then 
$c(u)\geq 7-\frac{1}{6}-2\cdot\frac{2}{3}\geq \frac{16}{3}$.
Thus, we may assume that $u$ sends charge to $w_1,w_2$, and $w_3$.
If there is at most one neighbor $w$ such that $N_D^+(w)=\{u,v_1\}$, 
then $c(u)\geq 7-\frac{2}{3}-\frac{1}{6}-2\cdot \frac{2}{3}\cdot \frac{7-5}{7+7-10}\geq \frac{16}{3}$.
Suppose there are two neighbors $w_1$ and $w_2$ such that $N_D^+(w_1)=N_D^+(w_2)=\{u,v_1\}$.
This implies that $d_D^+(u)=3$.
Let $N_D^+(u)=\{v_1,v_2,w_3\}$.
Since $d_D(v_1)=5$ and $N_D^+(u)$ is cyclic, we obtain that $v_2$ and $w_3$ are joined by a double arc.
If $d_D(w_3)=4$, then, for $N_D^+(w_3)=\{v_2,x\}$, 
the vertices $v_2$ and $x$ are joined by a double arc. 
Since $D-x$ is strongly connected, it follows that $d_D(v_2)\geq 8$.
This implies that $u$ sends no charge to $v_1$.
Hence, we have $c(u)\geq 7-2\cdot\frac{2}{3}-\frac{2}{3}\cdot \frac{7-5}{7+8-10}\geq \frac{16}{3}$.
If $d_D(w_3)\geq 5$, 
then we have $c(u)\geq 7-2\cdot \frac{2}{3}-2\cdot\frac{1}{6}\geq \frac{16}{3}$.

If $d_D(v_1)\geq 6$ and $d_D(v_2)\geq 6$, then $u$ sends at most ${\rm demand}(w,\sigma)\cdot\frac{7-5}{7+6-10}$ charge to a neighbor $w$.
Hence, $c(u)\geq 7-3\cdot \frac{2}{3}\cdot \frac{7-5}{7+6-10}\geq \frac{16}{3}$.

\medskip

\noindent {\bf Case 3.} {\it $u$ is incident with exactly three double arcs.}

\medskip

\noindent Let $D$ contain the three double arcs between $u$ and $v_1$, between $u$ and $v_2$, and between $u$ and $v_3$. 
Let $w$ be the fourth neighbor of $u$.
First, we assume that there is a directed cycle of length 3 in $D[\{v_1,v_2,v_3\}]$. 
By symmetry, we may assume that $v_1v_2v_3v_1$ is a directed cycle in $D$. 
Suppose that $d_D(v_1)=4$.
Consider the case that $u$ sends no charge to $w$.
Since $D$ is strongly 2-connected, we have $d_D(v_2)\geq 5$ and $d_D(v_3)\geq 6$ or $d_D(v_2)\geq 6$ and $d_D(v_3)\geq 5$. 
Hence, we have $c(u)\geq 7-\frac{2}{3}\cdot \frac{7-5}{7+6-10}-\frac{2}{3}-\frac{1}{3}\geq \frac{16}{3}$.
Consider now the case that $u$ sends charge to $w$.
Since $D$ is strongly $2$-connected, we have $d_D(v_2)\geq 6$ and $d_D(v_3)\geq 6$ or 
$d_D(v_2)\geq 5$ and $d_D(v_3)\geq 7$ or 
$d_D(v_2)\geq 7$ and $d_D(v_3)\geq 5$.
In all cases, we have 
$c(u)\geq \min\left\{7-3\cdot \frac{2}{3}\cdot\frac{7-5}{7+6-10}, 7-\frac{2}{3}-2\cdot \frac{2}{3}\cdot\frac{7-5}{7+7-10}-\frac{1}{3}\right\} \geq \frac{16}{3}$.
If $d_D(v_1), d_D(v_2),d_D(v_3)\geq 5$, 
then $c(u)\geq 7-3\cdot \frac{1}{3}-\frac{2}{3}\geq \frac{16}{3}$.

Now, we assume that $v_1$ and $v_2$ are joined by a double arc. 
Suppose that $d_D(v_1)=4$. 
Since $D$ is strongly $2$-connected, 
it follows that $d_D(v_2)\geq 6$. 
If $d_D(v_3)\geq 6$, then $c(u)\geq 7-3\cdot \frac{2}{3}\cdot\frac{7-5}{7+6-10}\geq \frac{16}{3}$.
Suppose, for a contradiction, that $d_D(v_3)=4$. 
Then $v_2$ and $v_3$ are joined by a double arc.
But then $D-v_2$ is not strongly connected, which is a contradiction.
If $d_D(v_3)=5$, then $N_D^\sigma(v_3)=\{u,v_2\}$ for some $\sigma\in\{+,-\}$. 
Since $D-v_2$ is strongly connected, 
we have $w\in N_D^\sigma(u)$.
Moreover, if $u$ sends charge to $w$, then $N_D^{-\sigma}(w)=\{u,v_2\}$ and $d_D(v_2)\geq 7$.
We have $c(u)\geq \min\left\{7-2\cdot\frac{2}{3}\cdot\frac{7-5}{7+6-10}-\frac{1}{3},7-3\cdot \frac{2}{3}\cdot\frac{7-5}{7+7-10}-\frac{1}{6}\right\}\geq \frac{16}{3}$.
Let $d_D(v_1)=5$ and $d_D(v_2)=5$.
Suppose, for a contradiction, that $d_D(v_3)=4$. 
We obtain $N_D^\sigma(v_3)=\{v_1,u\}$ and $N_D^{-\sigma}(v_3)=\{u,v_2\}$.
Now, 
$D-u$ is not strongly connected, which is a contradiction.
Hence, we have $d_D(v_3)\geq 5$ and $c(u)\geq 7-3\cdot \frac{1}{3}-\frac{2}{3}\geq\frac{16}{3}$.
Let $d_D(v_1)=5$ and $d_D(v_2)\geq 6$.
If $d_D(v_3)=4$, 
then $v_2$ and $v_3$ are joined by a double arc as $D-v_2$ is strongly connected.
Since $D$ is strongly $2$-connected, we obtain that $d_D(v_2)\geq 8$.
We have $c(u)\geq 7-\frac{2}{3}-2\cdot\frac{2}{3}\cdot\frac{2}{5}\geq \frac{16}{3}$.
If $d_D(v_3)\geq 5$, 
then $c(u)\geq 7-\frac{1}{3}-\frac{1}{3}-\frac{2}{3}\geq \frac{16}{3}$.
If $d_D(v_1)\geq 6$, then $c(u)\geq 7-\frac{2}{3}\geq \frac{16}{3}$.

This completes the proof of the claim.
\end{proof}

\begin{claim}\label{claim6}
If $u$ is a vertex in $V_8$, then $c(u)\geq \frac{16}{3}$.
\end{claim}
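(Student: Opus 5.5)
We follow the template of Claims \ref{claim4} and \ref{claim5}. A vertex $u\in V_8$ may lose $8-\frac{16}{3}=\frac{8}{3}$ charge. Any single transfer is at most the demand of the receiving pair, hence at most $\frac{2}{3}$. So if $u$ sends charge along at most four arcs we are done. We therefore assume that $u$ sends charge along at least five arcs. Every receiver $w$ of charge from $u$ has $d_D^\sigma(w)=2$ and $N_D^\sigma(w)=\{u,v\}$ for some $\sigma$. Neighborhood-cyclicity of $N_D^\sigma(w)$ then forces a double arc between $u$ and $v$. We split according to the number $k\in\{1,2,3,4\}$ of double arcs at $u$; note $k\le 4$ since $d_D(u)=8$.

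\emph{Case $k=1$, double arc with $v$.} As in Claim \ref{claim4}, Case 1, we get $d_D(v)\geq 6$ and $d^+_D(v),d^-_D(v)\geq 3$. Every receiver $w$ has $N^\sigma_D(w)=\{u,v\}$, and $u$ sends nothing in Phases 1 and 2 there. If $d_D(v)=6$, then two receivers on the same side $\sigma$ would fill $N^\sigma_D(v)$, and cyclicity would give them a double arc, contradicting $d^{-\sigma}=2$. Hence at least five receivers force $d_D(v)\geq 7$. With six receivers, some side of $v$ contains three of them, and the same argument pushes $d_D(v)$ higher. Each transfer is then at most $\frac{2}{3}\cdot\frac{3}{3+d_D(v)-5}$, and with $d_D(v)\ge7$ and at most six receivers (each $w_i\in N(u)\setminus\{v\}$, of which there are six distinct vertices) the total is at most $6\cdot\frac23\cdot\frac35=\frac{12}{5}<\frac83$.

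\emph{Cases $k=2,3,4$.} Let $v_1,\dots,v_k$ be the double-arc neighbours, ordered by degree, and let the remaining $8-2k$ neighbours be the $w_j$. Each $w_j$ can receive charge only via a pair $\{u,v_i\}$. The $v_i$ themselves can receive charge from $u$ in Phase 3 or 4, or as the vertex $w$ in Phases 1 and 2. Phase 1 needs $d_D(w)\ge7$; Phase 2 needs both $v,w\in V_6$, so it cannot involve $u$.

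We subcase on whether $d_D(v_1)\in\{4,5\}$ or $\ge 6$. If all $d_D(v_i)\geq 6$, each transfer from $u$ is at most $\frac23\cdot\frac{3}{3+6-5}=\frac12$. Since $u$ sends to at most $8-k\le 6$ vertices, this gives a loss of at most $3$, which is not yet enough. We refine it: two receivers sharing a side with the same $v_i$ of degree $6$ are excluded as above, so such $v_i$ have degree $\geq7$ and the per-transfer cost drops to $\frac25$.

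When some $v_i$ has degree $4$ or $5$, strong $2$-connectivity is used. As in Claims \ref{claim4} and \ref{claim5}, removing $u$ or a high-degree $v_i$ must not disconnect the cluster $\{u,v_1,\dots\}$ together with the low-degree receivers. This bounds how many $w_j$ can lean on the same $v_i$, and it forces degree lower bounds such as $d_D(v_i)\ge 7$ or $8$ through the cyclicity of $N^\sigma_D(v_i)$. We then sum the explicit bounds: $\frac13$ or $\frac16$ for degree-$5$ receivers, and $\frac23\cdot\frac{3}{d_D(v)-2}$ for degree-$4$ receivers that share with $v$.

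Phase 1, where $u$ plays the role of $w$ and sends $\frac13+\frac23$, is handled separately. In that configuration the structure $\{u',v,u\}\cong\overset\leftrightarrow{K}_3$ with $d(v)=6$ limits the remaining receivers via 2-connectivity, exactly as in Claim \ref{claim5}.

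The main obstacle is the case $k=2$ (and to a lesser extent $k=3$) with $d_D(v_1)=5$ and $v_1v_2$ joined by a single arc. Here the most receivers (up to four $w_j$ plus $v_1$) are possible, and the bound is tight. I would first try to show that at most two of the $w_j$ can have $N^\sigma_D(w_j)=\{u,v_1\}$. Since $d^+_D(v_1)=2$ or $d^-_D(v_1)=2$, the side containing $v_2$ is exhausted. I would then force the others to share with $v_2$, pushing $d_D(v_2)\ge 7$ or $8$ via cyclicity of $N^\sigma_D(v_2)$ and strong $2$-connectivity, which makes those transfers at most $\frac23\cdot\frac{3}{5}$ or less.
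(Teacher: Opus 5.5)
Your framework matches the paper's. At most four charged arcs suffices. Every receiver $w$ has $N_D^\sigma(w)=\{u,v\}$ with $u$ and $v$ joined by a double arc. One then splits by the number $k$ of double arcs at $u$. Your case $k=1$ is correct and is the paper's Case 1: five receivers each contribute an arc at $v$, so $d_D(v)\ge 7$ and the loss is at most $6\cdot\frac25=\frac{12}{5}$.

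The gap is that for $k\in\{2,3,4\}$ you only name tools (cyclicity, strong $2$-connectivity, ``pushing'' degrees up) without deriving any bound. These cases carry essentially all of the work, and some have no slack. Take $k=3$ with $d_D(v_1)=d_D(v_2)=4$, an arc $(v_1,v_2)$, and six charged arcs. The neighborhoods are forced to be $N_D^+(v_1)=\{u,v_2\}$, $N_D^-(v_2)=\{u,v_1\}$ and $N_D^-(v_1)=N_D^+(v_2)=\{u,v_3\}$. So $u$ sends $\frac23$ along two arcs and up to $\frac23\cdot\frac{3}{d_D(v_3)-2}$ along each of the other four. The total is at most $\frac83$ only if $d_D(v_3)\ge 8$. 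That bound has to be extracted from $N_D^-(w_1)=N_D^+(w_2)=\{u,v_3\}$ together with strong connectivity of $D-w_1$ and $D-w_2$. Likewise, three subcases are never carried out:
\begin{itemize}
\item $k=2$ with $d_D(v_1)=4$, which forces the double arc $v_1v_2$ and then $d_D(v_2)\ge 7$;
\item $k=2$ with a double arc $v_1v_2$ and $d_D(v_1)=5$;
\item $k=4$ with $d_D(v_1)=d_D(v_2)=4$ and $d_D(v_3)\le 5$, which must be excluded via $D-v_4$.
\end{itemize}

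Two of your concrete remarks also need correcting. First, if every double-arc neighbor has degree at least $6$, none of them receives charge, since only vertices of $V_4\cup V_5$ do. All receivers then lie among the $8-2k\le 4$ single-arc neighbors, each served along one arc, and you are done immediately. Your count of $8-k$ receivers with loss $3$, and the proposed refinement excluding only same-side receivers at a degree-$6$ vertex, miss this.

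Second, and more seriously, consider what you call the main obstacle: $k=2$, $d_D(v_1)=5$, single arc $(v_1,v_2)$. Your step ``at most two $w_j$ share with $v_1$'' is just the degree count, and it does not close the case when also $d_D(v_2)=5$. Then $d_D(v_2)$ cannot be pushed up, $v_2$ itself receives charge, and the same count allows two sharers at $v_2$. The potential loss is then $\frac13+\frac13+4\cdot\frac23=\frac{10}{3}>\frac83$.

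The missing argument is this. $(v_1,v_2)\in A(D)$ and $d_D(v_1)=5$ force $N_D^+(v_1)=\{u,v_2\}$. So every $w$ sharing with $v_1$ has $N_D^+(w)=\{u,v_1\}$. Two such vertices $w,w'$ would give $N_D^-(v_1)=\{u,w,w'\}$, whose only arcs are $(w,u)$ and $(w',u)$, so it is acyclic — a contradiction. The symmetric argument applies at $v_2$. Hence $d_D(v_2)=5$ leaves at most four charged arcs. Otherwise $d_D(v_2)\ge 6$, and the loss is at most $\frac13+\frac23+3\cdot\frac12=\frac52<\frac83$.
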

\begin{proof}[Proof of Claim \ref{claim6}]
Since $8-4\cdot\frac{2}{3}\geq \frac{16}{3}$,
we may assume that $u$ sends charge along at least five arcs.
Again, there is at least one neighbor $v$ of $u$ 
such that $u$ and $v$ are joined by a double arc.

\medskip

\noindent {\bf Case 1.} {\it $u$ is incident with exactly one double arc.}

\medskip
    
\noindent Let $D$ contain a double arc between $u$ and $v$. 
Observe that $u$ sends no charge in Phase 1 or Phase 2.
If $u$ sends charge along the arc $(u,w)$, then $N_D^-(w)=\{u,v\}$.
If $u$ sends charge along the arc $(w,u)$, then $N_D^+(w)=\{u,v\}$.
Hence, we obtain $d_D(v)\geq 7$ and 
$c(u)\geq 8-6\cdot\frac{2}{3}\cdot\frac{8-5}{8+7-10}\geq \frac{16}{3}$.

\medskip

\noindent {\bf Case 2.} {\it $u$ is incident with exactly two double arcs.}

\medskip

\noindent Let $D$ contain the two double arcs between $u$ and $v_1$ and between $u$ and $v_2$.
By symmetry, we assume that $d_D(v_1)\leq d_D(v_2)$.
Note that $d_D(v_1)=4$ implies that $v_1$ and $v_2$ are joined by a double arc.
Since $u$ sends charge along at least five arcs,
we obtain that $d_D(v_2)\geq 7$.
Hence, we have $c(u)\geq 8-6\cdot\frac{2}{3}\cdot \frac{8-5}{8+7-10}\geq \frac{16}{3}$.
Now, let $d_D(v_1)=5$ and $d_D(v_2)\geq 5$.
Since $d_D(v_1)=5$, the vertices $v_1$ and $v_2$ are joined by a single arc or a double arc.
If $v_1$ and $v_2$ are joined by a double arc, then there is at most one neighbor $w\neq v_2$ of $u$ such that $N_D^\sigma(w)=\{u,v_1\}$. 
Since $u$ sends charge along at least five arcs, this implies that there are at least two vertices $w_1$ and $w_2$ such that $N_D^{\sigma_1}(w_1)=N_D^{\sigma_2}(w_2)=\{u,v_2\}$. 
Hence, we obtain that $d_D(v_2)\geq 6$ and $c(u)\geq 8-\frac{2}{3}-\frac{1}{3}-3\cdot \frac{2}{3}\cdot \frac{8-5}{8+6-10}\geq \frac{16}{3}$.
Assume now that $v_1$ and $v_2$ are joined by a single arc.
By symmetry, we may assume that $(v_1,v_2)\in A(D)$.
Suppose, for a contradiction, that there are two vertices $w_1$ and $w_2$ such that $N_D^+(w_1)=N_D^+(w_2)=\{u,v_1\}$.
Since $N_D^-(v_1)=\{u,w_1,w_2\}$ is cyclic, it follows that $w_1$ and $w_2$ are joined by a double arc, which is a contradiction to $d_D^+(w_1)=2$.
Hence, there is at most one vertex $w$ such that $N_D^+(w)=\{u,v_1\}$.
If $d_D(v_2)=5$, we obtain by the same argument that there is at most one vertex $w'$ such that $N_D^-(w')=\{u,v_2\}$.
It follows that $u$ sends charge along at most four arcs,
which is a contradiction.
Hence, we obtain $d_D(v_2)\geq 6$
and $c(u)\geq 8-\frac{2}{3}-\frac{1}{3}-3\cdot \frac{2}{3}\cdot\frac{8-5}{8+6-10}$.
If $d_D(v_1)\geq 6$ and $d_D(v_2)\geq 6$, then $u$ sends charge along at most four arcs,
which again is a contradiction.

\medskip

\noindent {\bf Case 3.} {\it $u$ is incident with exactly three double arcs.}

\medskip

\noindent Let $D$ contain the three double arcs between $u$ and $v_1$, between $u$ and $v_2$, and between $u$ and $v_3$.
Let $w_1$ and $w_2$ be the other neighbors of $u$.
By symmetry, assume that $d_D(v_1)\leq d_D(v_2)\leq d_D(v_3)$.
Consider the case $d_D(v_1)=4$ and $d_D(v_2)=4$.
We claim that $d_D(v_3)\geq 7$ if $u$ sends charge along five arcs, and 
that $d_D(v_3)\geq 8$ if $u$ sends charge along six arcs.
Indeed, if there is no arc between $v_1$ and $v_2$, 
then $v_1$ and $v_3$ are joined by a double arc and $v_2$ and $v_3$ are joined by a double arc.
Since $D-u$ is strongly connected, we have $d_D(v_3)\geq 8$.
If there is an arc between $v_1$ and $v_2$, then we may assume by symmetry that $(v_1,v_2)\in A(D)$.
Note that 
$(v_2,v_1)\not\in A(D)$
and
$(v_3,v_1),(v_2,v_3)\in A(D)$.
Since $D-u$ is strongly connected, it follows $d_D(v_3)\geq 6$.
Furthermore, since $D-v_3$ is strongly connected, 
we may assume that 
$w_1\in N_D^+(u)$ and $w_2\in N_D^-(u)$.
Again, if $u$ sends charge to $w_1$, then $N_D^-(w_1)=\{u,v_3\}$ and if $u$ sends charge to $w_2$, then $N_D^+(w_2)=\{u,v_3\}$.
Since $D-w_1$ and $D-w_2$ are strongly connected, we obtain that $d_D(v_3)\geq 7$ if $u$ sends charge along five arcs, and $d_D(v_3)\geq 8$ if $u$ sends charge along six arcs.
We have $c(u)\geq \min\left\{8-2\cdot\frac{2}{3}-3\cdot\frac{2}{3}\cdot \frac{8-5}{8+7-10},8-2\cdot \frac{2}{3}-4\cdot\frac{2}{3}\cdot \frac{8-5}{8+8-10}\right\}\geq \frac{16}{3}$.
Next, consider the case $d_D(v_1)=4$ and $d_D(v_2)=5$.
If $v_1$ and $v_2$ are joined by a double arc,
then $D-u$ is not strongly connected as $d_D(v_2)=5$.
It follows that 
$v_1$ and $v_3$ are joined by a double arc 
or that
$v_1$ and $v_2$ as well as $v_1$ and $v_3$
are joined by single arcs.
First, we assume that $v_1$ and $v_3$ are joined by a double arc. 
Since $D-u$ is strongly connected, we obtain that $d_D(v_3)\geq 6$.
We have $c(u)\geq 8-2\cdot\frac{2}{3}\cdot\frac{8-5}{8+6-10}-\frac{1}{3}-2\cdot\frac{2}{3}\geq \frac{16}{3}$.
Now, we assume that $v_1$ and $v_2$ as well as $v_1$ and $v_3$ are joined by a single arc.
By symmetry, we may assume $(v_1,v_2),(v_3,v_1)\in A(D)$.
Next, suppose, for a contradiction, that $u$ sends charge to $w_1$ and $w_2$ and $N_D^-(w_1)=N_D^-(w_2)=\{u,v_2\}$.
But then $N_D^-(u)$ is not cyclic, which is a contradiction.
Hence, if $u$ sends charge to $w_1$ and $w_2$, then $N_D^{\sigma_1}(w_1)=\{u,v_3\}$ or $N_D^{\sigma_2}(w_2)=\{u,v_3\}$. 
Since $N_D^+(u)$ is cyclic and $D$ is strongly $2$-connected, it follows $d_D(v_3)\geq 6$.
We have $c(u)\geq 8-2\cdot\frac{2}{3}-\frac{1}{3}-2\cdot \frac{2}{3}\cdot\frac{8-5}{8+6-10}$.
If $d_D(v_1)=4$ and $d_D(v_2)\geq 6$, then $u$ sends charge along at most four arcs.
If $d_D(v_1)\geq 5$, $d_D(v_2)\geq 5$ and $d_D(v_3)\geq5$, we have $c(u)\geq 8-3\cdot\frac{1}{3}-2\cdot \frac{2}{3}\geq \frac{16}{3}$.

\medskip

\noindent {\bf Case 4.} {\it $u$ is incident with four double arcs.}

\medskip

\noindent Let $D$ contain the four double arcs between $u$ and $v_1$, between $u$ and $v_2$, between $u$ and $v_3$, 
and between $u$ and $v_4$.
By symmetry, assume that $d_D(v_1)\leq d_D(v_2)\leq d_D(v_3)\leq d_D(v_4)$.
Suppose, for a contradiction, that $d_D(v_1)=d_D(v_2)=4$ and $d_D(v_3)\leq 5$. 
Observe that 
$N_D^+(v_i),N_D^-(v_i) \subseteq N_D[u]$ for $i\in \{ 1,2\}$ 
and 
$N_D^\sigma(v_3)\subseteq N_D[u]$ for $\sigma\in \{+,-\}$ 
such that $d_D^\sigma(v_3)=2$.
Therefore, $D-v_4$ is not strongly connected, which is a contradiction.
If $d_D(v_1)=d_D(v_2)=4$, $d_D(v_3)\geq 6$ and $d_D(v_4)\geq 6$, then $u$ sends charge along at most four arcs.
If $d_D(v_1)\geq 4$ and $d_D(v_2),d_D(v_3),d_D(v_4)\geq 5$, then $c(u)\geq 8-2\cdot \frac{2}{3}-3\cdot \frac{1}{3}\geq \frac{16}{3}$.

This completes the proof of the claim.
\end{proof}

\begin{claim}\label{claim7}
If $u$ is a vertex in $V_9$, then $c(u)\geq \frac{16}{3}$.
\end{claim}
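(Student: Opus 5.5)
We follow the scheme of Claims \ref{claim4}--\ref{claim6}. Since $9-5\cdot\frac{2}{3}=\frac{17}{3}>\frac{16}{3}$, we may assume that $u$ sends charge along at least six arcs. As before, some neighbor $v$ of $u$ is joined to $u$ by a double arc. Otherwise every neighbor $w$ receiving charge from $u$ would satisfy $N_D^\sigma(w)=\{u,v'\}$ with $u$ and $v'$ joined by a double arc, because $N_D^\sigma(w)$ is cyclic. The basic observation is again this: if $u$ sends charge to $w$ with $d_D^\sigma(w)=2$, then $N_D^\sigma(w)=\{u,v'\}$ for a double-arc partner $v'$ of $u$. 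In Phase 3 the share of $u$ is then at most $\frac{2}{3}\cdot\frac{d_D(u)-5}{d_D(u)+d_D(v')-10}=\frac{2}{3}\cdot\frac{4}{d_D(v')-1}$, and this is at most $\frac{2}{3}$ only when $d_D(v')\le 5$.

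We split into cases by the number $k\in\{1,2,3,4\}$ of double arcs at $u$; $k\le 4$ since $d_D(u)=9$.

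For $k=1$, there is no Phase 1 or Phase 2 transfer. Every charged neighbor $w$ has $N_D^\sigma(w)=\{u,v\}$, and $d_D(v)\ge 6$ as in Case 1 of Claim \ref{claim5}. With at least six such $w$ we obtain $d_D(v)\ge 8$. Indeed, no two of them with the same $\sigma$ can form the whole set $N_D^\sigma(v)$ minus $u$, since the cyclicity of $N_D^\sigma(v)$ would force a double arc between two vertices of $\sigma$-degree $2$. Strong $2$-connectivity, applied by deleting $v$, adds further constraints. Then $c(u)\ge 9-7\cdot\frac{2}{3}\cdot\frac{4}{7}>\frac{16}{3}$.

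For $k\ge 2$, we let $v_1,\dots,v_k$ be the double-arc partners ordered by degree. We use $\frac{2}{3}$ as the crude bound for charge sent because of a partner of degree $4$, $\frac{1}{3}$ for a partner of degree $5$, and $\frac{2}{3}\cdot\frac{4}{d-1}\le\frac{8}{15}$ for a partner of degree $d\ge 6$. The remaining neighbors are charged at these rates according to which partner they attach to. The deficit $9-\frac{16}{3}=\frac{11}{3}$ leaves room for, e.g., five transfers of $\frac{2}{3}$ plus one of $\frac{1}{3}$. So only configurations in which many of the $v_i$ have degree $4$ or $5$ need work. These are ruled out or bounded as in Case 4 of Claim \ref{claim6}. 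If two partners have degree $4$ and a third has degree at most $5$, then all their relevant neighborhoods lie in $N_D[u]$, so deleting a single vertex disconnects $D$, which contradicts strong $2$-connectivity. A partner $v'$ of degree $5$ serves at most one further neighbor $w$ with $N_D^\sigma(w)=\{u,v'\}$, because the $\sigma$-neighborhood of $v'$ is cyclic and has size $2$ or $3$. Phase 1 and Phase 2 transfers from $u$ occur only when $u\in V_{\ge 7}$ is the vertex $w$ of the Phase 1 configuration. Each such event costs $\frac{1}{3}+\frac{2}{3}$ but simultaneously fixes two partners of degrees $5$ and $6$, and these then absorb no further charge from $u$.

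The main obstacle is the case $k=2$ or $k=3$ with a partner of degree $5$ joined to another partner by a single arc. Here, as in Case 2 of Claims \ref{claim5} and \ref{claim6}, we must track precisely which of the remaining $w_i$ have $N_D^\pm(w_i)=\{u,v_j\}$. We first use the cyclicity of $N_D^\pm(u)$ and of $N_D^-(v_1)$ to force a double arc or a $3$-cycle, which bounds the number of such $w_i$ per partner and direction by one. After that, counting gives at most about six charged arcs with total weight at most $\frac{11}{3}$.
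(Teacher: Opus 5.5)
You split by the number $k$ of double arcs at $u$, which is also how the paper proceeds. Your case $k=1$ is correct and matches the paper. There $d_D(v)\ge 8$ is just a degree count: $v$ is joined to $u$ by two arcs and to each of the at least six charged neighbors.

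The gap is the case $k\ge 2$, which carries essentially all of the work and which you only describe. Your closing sentence, ``counting gives at most about six charged arcs with total weight at most $\frac{11}{3}$'', is the statement to be proved, not an argument.

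The bookkeeping you propose for that count is also wrong:
\begin{itemize}
\item A vertex $w\in V_4$ with $N_D^\sigma(w)=\{u,v'\}$ and $d_D(v')=5$ receives the full $\frac{2}{3}$ from $u$ in Phase~3, not $\frac{1}{3}$. This is because $u$ is the only vertex of $V_{\ge 6}$ in $N_D^\sigma(w)$.
\item A partner of degree $4$ can take $\frac{2}{3}$ on each of its two pairs.
\item Your remark that the Phase~3 share is at most $\frac{2}{3}$ ``only when $d_D(v')\le 5$'' is reversed. The split occurs only for $d_D(v')\ge 6$, where the share is at most $\frac{8}{15}$.
\end{itemize}

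More seriously, one structural fact you rely on is false for $d_D(u)=9$. You claim that two partners of degree $4$ together with a third partner of degree at most $5$ cannot occur, as in Case~4 of Claim~\ref{claim6}. That argument works for $d_D(u)=8$ only because there $N_D(u)=\{v_1,\ldots,v_4\}$. For $d_D(u)=9$ and $k=4$, the fifth neighbor $w$ is joined to $u$ by a single arc, and this arc can supply exactly the direction that is missing in $D-v_4$.

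For instance, take $v_1\to v_2$, $N_D^-(v_1)=\{u,v_3\}$, $N_D^+(v_2)=\{u,v_4\}$, $N_D^+(v_3)=\{u,v_1\}$, $N_D^-(v_3)=\{u,v_4,q\}$, and $u\to w$. This is locally consistent with all neighborhoods cyclic, $d_D(v_1)=d_D(v_2)=4$, and $d_D(v_3)=5$.

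This is exactly where the bound is tight. If also $d_D(v_4)=5$ and $u$ charges $w$, the crude count is $4\cdot\frac{2}{3}+\frac{1}{3}+\frac{1}{3}+\frac{2}{3}=4>\frac{11}{3}$. The paper needs a further argument here:
\begin{itemize}
\item If $u$ sends charge to $w$, strong connectivity of $D-v_4$ forces the two-element neighborhood of $w$ containing $u$ to be $\{u,v_4\}$.
\item Strong $2$-connectivity then gives $d_D(v_4)\ge 6$, so $v_4$ receives nothing.
\item The total is then exactly $5\cdot\frac{2}{3}+\frac{1}{3}=\frac{11}{3}$.
\end{itemize}
Your sketch has no such step, and your $\frac{1}{3}$ rate would hide the fact that one is needed. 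So the case $k\ge 2$ remains unproved. It has to be carried out subcase by subcase according to the degrees of the partners, using such connectivity and cyclicity arguments, as the paper does.
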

\begin{proof}[Proof of Claim \ref{claim7}]
Since $9-5\cdot \frac{2}{3}\geq\frac{16}{3}$,
we may assume that $u$ sends charge along at least six arcs. 
Again, there is at least one neighbor $v$ of $u$ such that $u$ and $v$ are joined by a double arc.

\medskip

\noindent {\bf Case 1.} {\it $u$ is incident with exactly one double arc.}

\medskip

\noindent Let $D$ contain the double arc between $u$ and $v$.
Since $u$ sends charge along at least six arcs, we have $d_D(v)\geq 8$.
Hence, we have $c(u)\geq 9-7\cdot\frac{2}{3}\cdot\frac{9-5}{9+8-10}\geq \frac{16}{3}$.

\medskip

\noindent {\bf Case 2.} {\it $u$ is incident with exactly two double arcs.}

\medskip

\noindent Let $D$ contain the two double arcs between $u$ and $v_1$ and between $u$ and $v_2$.
If $d_D(v_1)=4$, then $d_D(v_2)\geq 8$.
Hence, we have $c(u)\geq 9-7\cdot\frac{2}{3}\cdot\frac{9-5}{9+8-10}\geq \frac{16}{3}$.
If $d_D(v_1)\geq5$, then $d_D(v_2)\geq 6$.
We have $c(u)\geq 9-\frac{1}{3}-5\cdot \frac{2}{3}\geq \frac{16}{3}$.

\medskip

\noindent {\bf Case 3.} {\it $u$ is incident with exactly three double arcs.}

\medskip

\noindent Let $D$ contain the three double arcs between $u$ and $v_1$, between $u$ and $v_2$, and between $u$ and $v_3$.
By symmetry, we may assume that $d_D(v_1)\leq d_D(v_2)\leq d_D(v_3)$.
Consider the case that $d_D(v_1)=4$ and $d_D(v_1)=4$.
We claim that $d_D(v_3)\geq 7$.
If there is no single arc between $v_1$ and $v_2$,
then $D$ contains the two double arcs between $v_1$ and $v_3$ and between $v_2$ and $v_3$. 
Since $D-u$ is strongly connected, it follows $d_D(v_3)\geq 8$.
If there is a single arc between $v_1$ and $v_2$,
then $D$ contains single arcs between $v_1$ and $v_3$ and between $v_2$ and $v_3$.
Since $D-u$ is stronlgy connected, we obtain $d_D(v_3)\geq 6$.
Therefore, the vertex $u$ sends charge to neighbors $w_1$ and $w_2$.
This implies that $N_D^{\sigma_1}(w_1)=\{u,v_3\}$ and $N_D^{\sigma_2}(w_2)=\{u,v_3\}$
for some $\sigma_1,\sigma_2\in \{ +,-\}$.
Since $D$ is strongly $2$-connected, we have $d_D(v_3)\geq 7$.
Thus, we have $c(u)\geq 9-2\cdot\frac{2}{3}-5\cdot \frac{2}{3}\cdot\frac{9-5}{9+7-10}\geq \frac{16}{3}$.

Let $d_D(v_1)= 4$, $d_D(v_2)=5$ and $d_D(v_3)= 5$.
If $u$ sends charge along six arcs, 
we have $c(u)\geq 9-4\cdot\frac{2}{3}-2\cdot\frac{1}{3}\geq \frac{16}{3}$.
Suppose, for a contradiction, that $u$ sends charge along seven arcs.
Note that $v_1$ and $v_2$ as well as $v_1$ and $v_3$ are not joined by a double arc as $D$ is strongly $2$-connected. 
However, since $d_D(v_1)=4$, $D$ contains an arc between $v_1$ and $v_2$ and an arc between $v_1$ and $v_3$.
By symmetry, we may assume that $(v_1,v_2),(v_3,v_1)\in A(D)$.
Note that $u$ sends charge to three other vertices $w_1,w_2$ and $w_3$.
Since $D$ is strongly $2$-connected, by symmetry, we may assume that $w_1,w_2\in N_D^-(u)$ and $w_3\in N_D^+(u)$.
Note that $N_D^-(w_3)=\{u,v_2\}$ and $N_D^+(w_1)=N_D^+(w_2)=\{u,v_3\}$.
It follows that $N_D^+(u)=\{v_1,v_2,v_3,w_3\}$ is acyclic, which is a contradiction.

Let $d_D(v_1)=4$, $d_D(v_2)\geq5$, and $d_D(v_3)\geq 6$. 
Now, the vertex $u$ sends charge along at most six arcs and $u$ sends charge to at least one neighbor of degree $5$. 
We have $c(u)\geq 9-5\cdot\frac{2}{3}-\frac{1}{3}\geq \frac{16}{3}$.

Let $d_D(v_1)\geq 5$, $d_D(v_2)\geq 5$, and $d_D(v_3)\geq 5$. 
Now, we have $c(u)\geq 9-3\cdot\frac{1}{3}-3\cdot\frac{2}{3}\geq \frac{16}{3}$.

\medskip

\noindent {\bf Case 4.} $u$ is incident with four double arcs.

\medskip

\noindent Let $D$ contain the four double arcs between $u$ and $v_1$, between $u$ and $v_2$, between $u$ and $v_3$, and between $u$ and $v_4$.
By symmetry, we may assume that $d_D(v_1)\leq d_D(v_2)\leq d_D(v_3)\leq d_D(v_4)$. 
Let $w$ be the fifth neighbor of $u$.
First, we assume that $d_D(v_1)=4$ and $d_D(v_2)=4$.
If $v_1$ and $v_2$ are not joined by a single arc, then $d_D(v_3)\geq 5$ and $d_D(v_4)\geq 6$ since $D$ is strongly $2$-connected. 
Now, we assume that $v_1$ and $v_2$ are joined by a single arc.
Since $D-v_4$ is strongly connected, it follows that $d_D(v_3)\geq 5$.
If $u$ sends no charge to $w$, then $c(u)\geq 9-4\cdot \frac{2}{3}-2\cdot \frac{1}{3}$.
If $u$ sends charge to $w$, 
then let $\sigma \in \{+,-\}$ be such that $d_D^\sigma(w)=2$.
Since $D-v_4$ is strongly connected, we obtain that $w\in N_D^{\sigma}(u)$ and $N_D^{-\sigma}(w)=\{u,v_4\}$.
This implies that $d_D(v_4)\geq 6$ since $D$ is strongly $2$-connected. 
We have $c(u)\geq 9-5\cdot\frac{2}{3}-\frac{1}{3}\geq\frac{16}{3}$.
If $d_D(v_1)\geq 4$ and $d_D(v_2),d_D(v_3),d_D(v_4)\geq 5$, then we have $c(u)\geq 9-3\cdot\frac{2}{3}-3\cdot \frac{1}{3}\geq\frac{16}{3}$.

This completes the proof of the claim.
\end{proof}

\begin{claim}\label{claim8}
If $u$ is a vertex in $V_{10}$, then $c(u)\geq \frac{16}{3}$.
\end{claim}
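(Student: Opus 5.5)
We need $c(u)\ge 16/3$ for $u\in V_{10}$. Since $10-\frac{14}{3}=\frac{16}{3}$ and $u$ sends at most $\frac23$ along any arc, we may assume that $u$ sends charge along at least eight of its ten arcs. Every receiving pair $(w,\sigma)$ has $N_D^\sigma(w)=\{u,v\}$, and by cyclicity $u$ and $v$ are joined by a double arc. Also, $u$ sends its full $\frac23$ (or $\frac13$) only when $v\notin V_{\geq 6}$, or when $d_D(v)<d_D(u)$ in Phase 4, or in Phases 1 and 2. In Phases 1 and 2, however, $u$ (being of degree $10\ge 7$) plays the role of $w$ and sends only to vertices forming a $\overset\leftrightarrow{K}_3$ with a degree-$5$ and a degree-$6$ vertex. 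As in Claims \ref{claim4}--\ref{claim7}, I will split into cases according to the number $k$ of double arcs at $u$, with $k\le 5$.

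\textbf{Case $k=1$.} Let the double arc be to $v$. Every receiver $w\ne v$ has $N_D^\sigma(w)=\{u,v\}$, so $d_D(v)\ge 2+8$ roughly, since $v$ is adjacent to all these receivers. Hence $u$ sends at most $\frac23\cdot\frac{5}{10+d_D(v)-10}$ per arc, and $9$ such arcs give at most $6\cdot\frac{5}{d_D(v)}\le 3$. That is too much if $d_D(v)=10$, so I need the sharper count: $u$ sends along at most $9$ arcs, each getting $\frac23\cdot\frac{5}{d_D(v)}$ with $d_D(v)\ge 9$. This gives $\frac{10}{3}\cdot\frac{9}{d_D(v)}\le\frac{10}{3}$, which is not enough. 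The extra ingredient is strong $2$-connectivity: $D-v$ must have arcs leaving and entering the set of receivers plus $u$, so $v$ has additional neighbors and $d_D(v)\ge 10$. The total sent is then at most $9\cdot\frac23\cdot\frac5{10}=3$, leaving $7\ge\frac{16}{3}$. This case is in fact comfortable, since $\frac{5}{d_D(v)}$ is small.

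\textbf{Cases $k\ge 2$.} Let the double-arc neighbors be $v_1,\dots,v_k$ with $d_D(v_1)\le\cdots\le d_D(v_k)$. A vertex $v_i$ of degree $4$ has all its neighbors inside $N_D[u]$, and taking $\frac23$ is the worst case for it. A vertex $v_i$ of degree $5$ takes at most $\frac13$. A receiver $w\ne v_i$ takes at most $\frac23\cdot\frac{5}{d_D(v_j)}$ from $u$, where $v_j$ is its other $\sigma$-neighbor; this is at most $\frac23\cdot\frac56$, and at most $\frac23$ only if $d_D(v_j)\le 5$. The budget is $\frac{14}{3}$, so a bound of $\frac23$ per arc is fine for $7$ arcs, and we need savings only when $8$--$10$ arcs carry charge. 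As in Claims \ref{claim6} and \ref{claim7}, I will use strong $2$-connectivity to show that two degree-$4$ double-arc neighbors force a third neighbor $v_j$ of high degree. For instance, deleting $v_k$ or $u$ must keep $D$ strongly connected, which forces $d_D(v_k)$ to be large whenever many receivers hang on $v_k$. Many receivers on a single $v_j$ force $d_D(v_j)\ge 2+\#\text{receivers}$, so the per-arc amount is then at most $\frac{10}{3}\cdot\frac{1}{d_D(v_j)}$. Summed over the receivers attached to $v_j$, this contributes at most about $\frac{10}{3}$ in total per hub, not per arc. This hub-counting bound, that the total charge sent to receivers attached to a hub $v_j\in V_{\geq 6}$ is at most $\frac23\cdot 5\cdot\frac{r_j}{r_j+\cdot}$, is the key uniform estimate I would establish first.

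\textbf{Main obstacle.} The main difficulty is the configurations where several double-arc neighbors have degree $4$ or $5$ (each taking $\frac23$ or $\frac13$) together with receivers hanging on low-degree hubs. The worst case is $k=4$ or $5$ with mixed degrees $4,4,5,5,\dots$. There I would argue, exactly as in Claim \ref{claim6} Case 4, that $D-v_k$ or $D-u$ would be disconnected unless enough of the $v_i$ have degree at least $6$. The remaining charge count then becomes something like $10-2\cdot\frac23-2\cdot\frac13-\dots\ge\frac{16}{3}$.
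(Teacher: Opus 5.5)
Your case $k=1$ is fine in substance, but your commentary on it is confused. The budget is $10-\frac{16}{3}=\frac{14}{3}$, so totals of $3$ or $\frac{10}{3}$ already suffice. Also, $v$ itself cannot receive from $u$: its other $\sigma$-neighbour would need a second double arc to $u$. Hence at most $8$ arcs carry charge, and $d_D(v)\geq 10$ is immediate.

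The genuine gap is that the cases $k\geq 2$ are only announced, and they contain all of the difficulty. The tools you list do not control the expensive arcs. Your hub-counting estimate helps only when the hub lies in $V_{\geq 6}$. Even then it fails in Phase 1, where $u$ pays the full $\frac{2}{3}$ to $y$ although the hub has degree $6$. The arcs that really cost are those whose hub is a double-arc neighbour of $u$ in $V_4\cup V_5$; there $u$ pays the full $\frac{2}{3}$ or $\frac{1}{3}$ regardless of degrees. Your remark that a degree-$4$ double-arc neighbour takes at most $\frac{2}{3}$ is false. An interior vertex of a $V_4$-path $v_0v_1v_2$ with centre $u$ (possible since $d_D(u)=10=2\cdot 2+6$) receives $\frac{2}{3}$ along each of its two arcs to $u$. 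Saying that $D-v_k$ or $D-u$ would be disconnected ``unless enough of the $v_i$ have degree at least $6$'' gives no bound on the number of such arcs.

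What is missing is the reduction the paper uses, which makes a split on $k$ unnecessary.
\begin{itemize}
\item If $u$ sends only to vertices of $V_5$, it sends at most $10\cdot\frac{1}{3}$.
\item Otherwise $u$ sends to some vertex of $V_4$, and Claim \ref{claim3} produces a double-arc neighbour $w$ of $u$ in $V_{\geq 5}$.
\item If $u$ has two double-arc neighbours in $V_{\geq 5}$, the four arcs to them carry at most $2\cdot\frac{1}{3}$ in total, and the other six carry at most $6\cdot\frac{2}{3}$. So $c(u)\geq\frac{16}{3}$ at once. This case also contains every configuration in which $u$ acts in Phase 1.
\item Hence $w$ is the unique double-arc neighbour of $u$ outside $V_4$. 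Claim \ref{claim3} then forces each component of $D[N_D(u)\cap V_4]$ to be an isolated vertex or a directed path of length at most $2$, whose end vertices have $w$ as their remaining in- or out-neighbour.
\end{itemize}
This leaves four configurations: a path $v_0v_1v_2$, two disjoint paths of length $1$, one path of length $1$, or only isolated vertices. In each, only the few arcs along such a path carry the full $\frac{2}{3}$. The other charged arcs serve pairs $(z,\sigma)$ with $N_D^\sigma(z)=\{u,w\}$. Strong $2$-connectivity then yields $d_D(w)\geq 8$ or a contradiction, which closes the count. The parameter to split on is therefore the number of double-arc neighbours of $u$ in $V_{\geq 5}$, not the total number $k$ of double arcs.
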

\begin{proof}[Proof of Claim \ref{claim8}]
Since $10-7\cdot\frac{2}{3}\geq\frac{16}{3}$,
we may assume that $u$ sends charge along at least eight arcs.
If $u$ sends charge only to neighbors of degree $5$, we have $c(u)\geq 10-10\cdot\frac{1}{3}\geq \frac{16}{3}$.
We may assume that $u$ sends charge to at least one neighbor of degree $4$.
By Claim \ref{claim3}, there is at least one neighbor $v$ in $V_4$ of $u$ such that, for $N_D^\sigma(v)=\{u,w\}$, 
we have that $d_D(w)\geq 5$ and that $u$ and $w$ are joined by a double arc.
If there are two neighbors $w_1$ and $w_2$ in $V_{\geq 5}$ such that $D$ contains the two double arcs between $u$ and $w_1$ and between $u$ and $v_2$, 
then $c(u)\geq 10-6\cdot\frac{2}{3}-2\cdot\frac{1}{3}\geq \frac{16}{3}$.
Hence, we may assume that there is exactly one vertex $w$ in $V_{\geq 5}$ such that $u$ and $w$ are joined by a double arc.

By Claim \ref{claim3}, each component of the subdigraph $D[N_D(u)\cap V_4]$ is an isolated vertex or a directed path of length at most $2$.
First, consider the case that $D[N_D(u)\cap V_4]$ contains a directed path $P=v_0v_1v_2$.
Furthermore, let $x$ and $y$ in $V_{\geq 5}$ be as in Claim \ref{claim3}.
Since $V(P)\cup \{x,y\}\subseteq N_D^+(u)\cap N_D^-(u)$, we have that $x=y=w$.
Since $D-u$ is strongly connected, we have $d_D(w)\geq 6$.
Note that $u$ sends charge along two further arcs.
Since $D-w$ is strongly connected, we may assume that $(u,w_1)$ and $(w_2,u)$ are these arcs.
It follows that $N_D^-(w_1)=\{u,w\}$ and $N_D^+(w_2)=\{u,w\}$.
Since $D$ is strongly $2$-connected, 
we have $d_D(w)\geq 8$ and $c(u)\geq 10-4\cdot\frac{2}{3}-4\cdot\frac{2}{3}\cdot \frac{10-5}{10+8-10}\geq \frac{16}{3}$.
Consider now the case that $D[N_D(u)\cap V_4]$ contains two disjoint directed paths $P_1=v_0v_1$ and $P_2=w_0w_1$. 
It follows that 
$N_D^-(v_0)=\{u,w\}$, 
$N_D^+(v_1)=\{u,w\}$, 
$N_D^-(w_0)=\{u,w\}$, 
and $N_D^+(w_1)=\{u,w\}$.
Thus, $D-w$ is not strongly connected, which is a contradiction.
Consider now the case that $D[N_D(u)\cap V_4]$ consists of only one directed path $P=v_0v_1$ and isolated vertices.
Again, it follows that 
$N_D^-(v_0)=\{u,w\}$, 
$N_D^+(v_1)=\{u,w\}$, and 
$d_D(w)\geq 6$.
The vertex $u$ sends charge along four arcs not incident with $v_0$ or $v_1$.
Let $(w_1,\sigma_1),(w_2,\sigma_2), (w_3,\sigma_3)$, and $(w_4,\sigma_4)$ be the corresponding pairs.
It follows that $N_D^{\sigma_i}(w_i)=\{u,w\}$ for $i\in [4]$.
Hence, we obtain that $d_D(w)\geq 8$ and 
$c(u) \geq 10-2\cdot\frac{2}{3}-6\cdot\frac{2}{3}\cdot\frac{10-5}{10+8-10}\geq \frac{16}{3}$.
Finally, consider the case that $D[N_D(u)\cap V_4]$ consists of isolated vertices.
If $u$ sends charge along the arc $(u,x)$, then $N_D^-(x)=\{u,w\}$.
If $u$ sends charge along the arc $(x,u)$, then $N_D^+(x)=\{u,w\}$.
Since $u$ sends charge along at least eight arcs, we have $d_D(w)\geq 8$.
Hence, we obtain $c(u)\geq 10-8\cdot\frac{2}{3}\cdot\frac{10-5}{10+8-10}\geq \frac{16}{3}$.

This completes the proof of the claim.
\end{proof}

\begin{claim}\label{claim9}
If $u$ is a vertex in $V_{11}$, then $c(u)\geq \frac{16}{3}$.
\end{claim}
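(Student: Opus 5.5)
We follow the pattern of Claim \ref{claim8}. Since $11-8\cdot\frac{2}{3}=\frac{17}{3}\geq\frac{16}{3}$, we may assume that $u$ sends charge along at least nine arcs. If $u$ sends charge only to neighbors of degree $5$, then $c(u)\geq 11-11\cdot\frac{1}{3}=\frac{22}{3}$. So we may assume that $u$ sends charge to some neighbor of degree $4$.

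Every neighbor $v$ of $u$ that receives charge along an arc has $N_D^\sigma(v)=\{u,w\}$ for some $w$. Cyclicity then forces a double arc between $u$ and $w$. As in Claim \ref{claim8}, we split according to how many double arcs join $u$ to vertices of $V_{\geq 5}$.

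If there are two such partners $w_1,w_2\in V_{\geq 5}$, we use the bound: at most $u$'s remaining $7$ arcs carry $\frac{2}{3}$, and the two double-arc partners carry at most $\frac{1}{3}$ each. This gives $c(u)\geq 11-7\cdot\frac{2}{3}-2\cdot\frac{1}{3}=\frac{17}{3}$. That bound must be checked against the exact count of arcs, but it is the same accounting as in Claim \ref{claim8} with one more unit of slack. So we may assume there is a unique $w\in V_{\geq 5}$ joined to $u$ by a double arc.

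By Claim \ref{claim3}, the components of $D[N_D(u)\cap V_4]$ are isolated vertices or directed paths. Each path forces its end-neighbors $x=y=w$ and contributes double arcs to both $u$ and $w$. We then repeat the subcases of Claim \ref{claim8}:
\begin{itemize}
\item \emph{a path on three vertices:} two disjoint paths on $V_4$ is impossible, since $D-w$ would be disconnected;
\item \emph{a path on four vertices:} allowed now because $d_D(u)=11$, but it leaves only few other arcs;
\item \emph{one path on two vertices plus isolated vertices;}
\item \emph{only isolated vertices.}
\end{itemize}

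In every subcase, all arcs along which $u$ sends charge, other than those to $w$, have the form $N_D^{\sigma}(z)=\{u,w\}$. Strong $2$-connectivity ($D-z$ strongly connected for each such $z$) forces $w$ to have extra neighbors outside. The key estimate is $d_D(w)\geq 8$ whenever $u$ sends charge along at least eight arcs through vertices whose neighborhood is $\{u,w\}$. Then each such arc costs at most $\frac{2}{3}\cdot\frac{11-5}{11+8-10}=\frac{4}{9}$. In the worst case (all eleven arcs) this gives $c(u)\geq 11-11\cdot\frac{4}{9}=\frac{55}{9}>\frac{16}{3}$.

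Degree-$4$ vertices whose other $\sigma$-neighbor is not in $V_{\geq 6}$ get the full $\frac{2}{3}$ from $u$. These arise only inside $V_4$-paths, and at most $4$ of them occur, since a path uses up $u$'s double arcs. Such cases are handled as in Claim \ref{claim8}: $4\cdot\frac{2}{3}$ plus the rest at rate $\frac{4}{9}$, for example $11-4\cdot\frac{2}{3}-7\cdot\frac{4}{9}>\frac{16}{3}$.

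The main obstacle is the subcase with a long $V_4$-path, where $d_D(w)$ may be only $6$ or $7$. There the rate $\frac{2}{3}\cdot\frac{6}{d_D(w)+1}$ is close to $\frac{2}{3}$. I would use that a $V_4$-path of length $\ell$ forces $d_D(w)\geq 2\ell+6$ by Claim \ref{claim3}, which compensates exactly.
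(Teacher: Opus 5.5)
Your overall structure matches the paper's: reduce to at least nine charge-carrying arcs, dispose of two double-arc partners in $V_{\geq 5}$, then take a unique partner $w$ and use the Claim~\ref{claim3} structure of $D[N_D(u)\cap V_4]$. The two-partner case is fine. So is the isolated-vertices subcase: every recipient other than $w$ has $N_D^\sigma(z)=\{u,w\}$ and contributes its own arc at $w$, so plain counting gives a large $d_D(w)$. Strong $2$-connectivity is not needed there.

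The gap is the path subcase, which you yourself flag as the main obstacle, and your proposed repair does not work. The bound $d_D(v)\geq 2\ell+6$ in Claim~\ref{claim3} concerns the \emph{center} of the path, the vertex double-arc joined to all of $V(P)$. For the path you are worried about, that center is $u$ itself and the end-neighbors are $x=y=w$, so the bound says nothing about $d_D(w)$. Applied correctly it gives $2\ell+6\leq 11$, i.e.\ $\ell\leq 2$. So your ``path on four vertices'' subcase cannot occur. At most four arcs of $u$ carry the full $\frac{2}{3}$ because the other $\sigma$-neighbor lies in $V_4$; for $P=v_0v_1v_2$ these are the pairs $(v_0,+)$, $(v_1,\pm)$, $(v_2,-)$.

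What is missing is a proof that $d_D(w)\geq 6$, i.e.\ $w\notin V_5$; you simply assume ``$d_D(w)$ may be only $6$ or $7$''. This is not cosmetic. If $d_D(w)=5$, every $z\in V_4$ with $N_D^\sigma(z)=\{u,w\}$ has $N_D^\sigma(z)\cap V_{\geq 6}=\{u\}$ and receives the full $\frac{2}{3}$ from $u$ in Phase~3. Your assertion that full-rate recipients arise only inside $V_4$-paths presupposes this.

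The paper gets the bound from strong connectivity of $D-u$. The vertex $w$ is incident with its two arcs to $u$ and with $(w,v_0)$ and $(v_\ell,w)$. If $d_D(w)=5$, then $V(P)\cup\{w\}$ would be joined to the rest of $D-u$ by a single arc. A direct count of the arcs at $w$ forced by nine charge-carrying arcs also works.

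With $d_D(w)\geq 6$, two of the eleven arcs of $u$ go to $w$, which receives nothing. Each of the at most five remaining charge-carrying arcs costs at most $\frac{2}{3}\cdot\frac{11-5}{11+6-10}=\frac{4}{7}$, and $\frac{1}{3}$ is smaller anyway. Hence $c(u)\geq 11-4\cdot\frac{2}{3}-5\cdot\frac{4}{7}=\frac{115}{21}>\frac{16}{3}$.

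Your displayed estimate $11-4\cdot\frac{2}{3}-7\cdot\frac{4}{9}$ has two problems. It uses the rate corresponding to $d_D(w)\geq 8$, which is not available in this subcase. It also counts eleven charge-carrying arcs, although two of them go to $w$.
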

\begin{proof}[Proof of Claim \ref{claim9}]
Since $11-8\cdot\frac{2}{3}\geq\frac{16}{3}$,
we may assume that $u$ sends charge along at least nine arcs. 
If $m_D(u,V_4)\leq 6$, then $c(u)\geq 11-6\cdot\frac{2}{3}-5\cdot\frac{1}{3}\geq\frac{16}{3}$.
We may assume that $m_D(u,V_4)\geq 7$.
By Claim 2, there is at least one neighbor $v$ of $u$ in $V_4$ such that, for $N_D^\sigma(v)=\{u,w\}$, we have $d_D(w)\geq 5$ and $u$ and $w$ are joined by a double arc.
If there are at least two vertices $w_1$ and $w_2$ in $V_{\geq 5}$ such that $w_1,w_2\in N_D^+(u)\cap N_D^-(u)$, 
then $c(u)\geq 11-7\cdot \frac{2}{3}-2\cdot\frac{1}{3}\geq \frac{16}{3}$.
Hence, we may assume that that there is exactly one vertex $w$ in $V_{\geq 5}$ such that $u$ and $w$ are joined by a double arc.
By Claim \ref{claim3}, each component of the subdigraph $D[N_D(u)\cap V_4]$ is an isolated vertex or a directed path of length at most $2$.
Further, by Claim \ref{claim3}, for every path $P$ in $D[N_D(u)\cap V_4]$, there are two arcs between $V(P)$ and $w$.
For every isolated vertex $v$ in $D[N_D(u)\cap V_4]$, it holds that there is at least one arc between $v$ and $w$ if $u$ and $v$ are joined by a single arc and there is a double arc between $v$ and $w$ if $u$ and $v$ are joined by a double arc.
First, consider the case that $D[N_D(u)\cap V_4]$ contains a path $P$. 
Since $D-u$ is strongly connected, it follows that $d_D(w)\geq 6$.
If $m_D(u,V_4)=7$, then $c(u)\geq 11-7\cdot\frac{2}{3}-2\cdot\frac{1}{3}\geq \frac{16}{3}$.
If $m_D(u,V_4)\geq 8$, then there are at least four arcs between $N_D(u)\cap V_4$ and $w$.
We have $c(u)\geq 11-4\cdot\frac{2}{3}\cdot\frac{11-5}{11+6-10}-5\cdot \frac{2}{3}\geq \frac{16}{3}$.
Next, consider the case that $D[N_D(u)\cap V_4]$ contains no directed path.
Then there are at least seven arcs between $N_D(u)\cap V_4$ and $w$ as $m_D(u,V_4)\geq 7$.
It follows $d_D(w)\geq 9$ and 
we have $c(u)\geq 11-9\cdot\frac{2}{3}\cdot\frac{11-5}{11+9-10}\geq \frac{16}{3}$.

This completes the proof of the claim.
\end{proof}

\begin{claim}\label{claim10}
If $u$ is a vertex in $V_{\geq12}$, then $c(u)\geq \frac{16}{3}$.
\end{claim}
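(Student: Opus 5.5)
Let $d=d_D(u)\geq 12$. We need $d$ minus the total charge sent by $u$ to be at least $\frac{16}{3}$, i.e.\ total sent at most $d-\frac{16}{3}$. Only vertices in $V_{\geq 6}$ send charge. Every transfer out of $u$ goes along a distinct arc incident with $u$, and the amount sent along an arc is at most the corresponding demand. Since a demand is at most $\frac{2}{3}$ for vertices in $V_4$ and $\frac{1}{3}$ for vertices in $V_5$, we have
\[
\text{total sent} \leq \tfrac{2}{3}\,m_D(u,V_4)+\tfrac{1}{3}\,m_D(u,V_5).
\]
The plan is to show this is at most $d-\frac{16}{3}$.

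First, suppose $u$ is incident with at least one arc along which it sends no charge at all, or more generally that some arcs at $u$ carry at most $\frac{1}{3}$. The crude bound already suffices as soon as at least $\lceil\cdot\rceil$ few arcs are ``cheap''. With $k$ arcs carrying up to $\frac{2}{3}$ and the rest carrying at most $\frac{1}{3}$, the total is at most $\frac{2}{3}k+\frac{1}{3}(d-k)=\frac{d+k}{3}$. This is at most $d-\frac{16}{3}$ iff $k\leq 2d-16$. For $d\geq 12$ we have $2d-16\geq 8$, and $2d-16\geq d$ once $d\geq 16$. So for $d\geq 16$ the claim is immediate. For $12\leq d\leq 15$, what remains is the case $m_D(u,V_4)>2d-16$, i.e.\ at least $9,11,13,15$ arcs to $V_4$ respectively; in particular at least $d-3$ arcs (or nearly all arcs) of $u$ go to $V_4$.

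In that remaining case I mimic Claims \ref{claim8} and \ref{claim9}. By Claim \ref{claim2}, for each $v\in N_D(u)\cap V_4$ served by $u$ with $N^\sigma_D(v)=\{u,w\}$, the set $N^\sigma_D(v)$ is cyclic, so $u$ and $w$ are joined by a double arc.

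If two distinct vertices $w_1,w_2\in V_{\geq 5}$ are joined to $u$ by double arcs, then at least $4$ arcs at $u$ go to $V_{\geq5}$. Those arcs carry at most $\frac{1}{3}$ each, and if they go to $V_{\geq 6}$ they carry nothing. Thus $k\leq d-4$, which is $\leq 2d-16$ for $d\geq 12$, and we are done.

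So assume there is a unique $w\in V_{\geq5}$ doubly joined to $u$. Then, using Claim \ref{claim3} for path components of $D[N_D(u)\cap V_4]$ and the cyclicity argument for isolated vertices, every arc between $u$ and $N_D(u)\cap V_4$ forces an arc between the corresponding $V_4$-vertex and $w$, up to at most two arcs per path component. This gives $d_D(w)\geq 6$ and, typically, $d_D(w)\geq m_D(u,V_4)-O(1)$. In Phase 3 such a $V_4$-vertex $v$ with $N^\sigma_D(v)=\{u,w\}$ receives from $u$ only the fraction $\frac{d-5}{d+d_D(w)-10}$ of $\frac{2}{3}$. I then bound the total by
\[
\tfrac{2}{3}\,t+\tfrac{2}{3}\,(m_D(u,V_4)-t)\,\frac{d-5}{d+d_D(w)-10}+\tfrac{1}{3}\,(d-m_D(u,V_4)),
\]
where $t$ counts $V_4$-vertices whose other $\sigma$-neighbour lies in $V_4$ (paths; these are served fully only via the first bullet of Phase 3). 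Using $d_D(w)\geq m_D(u,V_4)-t$ together with strong $2$-connectivity (to add a couple more to $d_D(w)$, as in Claim \ref{claim8}), this is at most $d-\frac{16}{3}$.

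The main obstacle is this unique-$w$ subcase for $d=12,13$. There the slack is tight, and one must carefully bound the number of path components and the multiplicity of arcs from path vertices that do not pass through $w$. I would handle it exactly as in Claim \ref{claim9}: split according to whether $D[N_D(u)\cap V_4]$ contains a directed path. If it does, then $x=y=w$ and $D-u$ strongly connected give $d_D(w)\geq 6$ plus the extra arcs. If it does not, then $d_D(w)\geq m_D(u,V_4)$, and the fraction is at most $\frac{d-5}{2d-10-3}\approx\frac12$, which leaves ample room.
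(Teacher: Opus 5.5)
Your reduction to $m_D(u,V_4)\ge d-3$ and your disposal of the case with two vertices of $V_{\geq 5}$ doubly joined to $u$ are fine. The remaining case of a unique such $w$, however, carries the whole content of the claim, and you leave it unproved. You sketch a Phase~3 fraction computation with ``typically'' and ``$O(1)$'' estimates, never verify the displayed bound, and explicitly defer $d\in\{12,13\}$ as ``the main obstacle''.

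The displayed bound is also miscounted as stated. The quantity $t$ must count pairs (equivalently arcs) that $u$ serves in full, not vertices. An interior vertex $u_i$ of a path centred at $u$ has $N_D^+(u_i)=\{u,u_{i+1}\}$ and $N_D^-(u_i)=\{u,u_{i-1}\}$, so $u$ pays it the full $\frac{2}{3}$ in both directions.

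You also never show that such a $w$ exists. The $\sigma$-partner of a $V_4$-neighbour of $u$ could itself lie in $V_4$. Claim~\ref{claim3} is what rules this out: in that case $u$ is the centre of a path, and its $x,y\in V_{\ge 5}$ are doubly joined to $u$. The citation ``by Claim~\ref{claim2}'' for the double arc $uw$ should instead just be cyclicity of $N_D^\sigma(v)$.

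The missing observation makes the fraction analysis unnecessary. You already note $d_D(w)\ge 6$, so $w$ has no demand, and $u$ sends nothing along the two arcs between $u$ and $w$. Every other arc at $u$ carries at most $\frac{2}{3}$, since each arc serves at most one pair. Hence
$$c(u)\ge d-\tfrac{2}{3}(d-2)=\tfrac{d+4}{3}\ge\tfrac{16}{3} \quad\text{for } d\ge 12.$$
Your crude count charged those two arcs $\frac{1}{3}$ each. That lost $\frac{2}{3}$ is exactly why $d=12,13$ looked tight.

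This is the paper's argument. It first reduces to $u$ sending charge along at least $d-1$ arcs with $m_D(u,V_4)\ge d-3$. It then uses Claim~\ref{claim3} to obtain a unique $x\in V_{\ge 5}$ doubly joined to $u$, and gets $d_D(x)\ge 6$ from strong connectivity of $D-u$. The conclusion is that $u$ sends along at most $d-2$ arcs, a contradiction.
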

\begin{proof}[Proof of Claim \ref{claim10}]
If $u$ sends charge along at most $d_D(u)-2$ arcs, then $c(u)\geq \frac{16}{3}$.
If $m_D(u,V_4)\leq d_D(u)-4$, then $c(u)\geq d_D(u)-(d_D(u)-4)\cdot\frac{2}{3}-4\cdot \frac{1}{3}\geq \frac{16}{3}$.
Hence, we may assume 
that $u$ sends charge on at least $d_D(u)-1$ arcs
and that $m_D(u,V_4)\geq d_D(u)-3$.
By Claim \ref{claim3},
the subdigraph $D[N_D(u)\cap V_4]$ 
either consists of isolated vertices 
or contains a directed path 
$P=v_0v_1\dots v_\ell$ with $\ell\geq 1$.
Since $m_D(u,V_4)\geq d_D(u)-3$, 
Claim \ref{claim3} further implies 
that there is a unique vertex $x$ in $V_{\geq 5}$
such that $u$ and $x$ are joined by a double arc.
Moreover, 
since $D-u$ is strongly connected, 
it follows that $d_D(x)\geq 6$,
which implies the contradiction 
that $u$ sends charge along at most $d_D(u)-2$ arcs.

This completes the proof of the claim.
\end{proof}

Altogether,
it follows that $c(u)\geq \frac{16}{3}$ for every vertex $u$ of $D$,
which yields the contradiction 
$m=\frac{1}{2}\sum\limits_{u\in V(D)}c(u)
\geq \frac{8}{3}n$
and completes the proof.
\end{proof}


\begin{thebibliography}{}

\bibitem{aubobofopi} G. Aubian, M. Bonamy, R. Bourneuf, O. Fontaine, and L. Picasarri-Arrieta, On cuts of small chromatic number in sparse graphs, arXiv:2510.01791v1.

\bibitem{bagu} J. Bang-Jensen and G.~Z. Gutin, {\it Digraphs}, second edition, Springer Monographs in Mathematics, Springer, London, 2009.

\bibitem{beraraso} S. Bessy, J. Rauch, D. Rautenbach, and U.S. Souza, Sparse vertex cutsets and the maximum degree, Electron. J. Combin. {\bf 32} (2025), no.~2, Paper No. 2.32, 11 pp..

\bibitem{bonesovoruvo} I.I. Bogdanov, E. Neustroeva, G. Sokolov, A. Volostnov, N. Russkin, and V. Voronov, On forest and bipartite cuts in sparse graphs, arXiv:2505.16179.

\bibitem{bocofefigosa} F. Botler, Y.S. Couto, C.G. Fernandes, E.F. de Figueiredo, R. G\'{o}mez, V.F. dos Santos, and C.M. Sato, Extremal Problems on Forest Cuts and Acyclic Neighborhoods in Sparse Graphs, arXiv:2411.17885.

\bibitem{chyu} G. Chen and X. Yu, A note on fragile graphs, Discrete Math. {\bf 249} (2002), no.~1-3, 41--43.

\bibitem{chfaja} G. Chen, R.J. Faudree, and M.S. Jacobson, Fragile graphs with small independent cuts, J. Graph Theory {\bf 41} (2002), no.~4, 327--341.

\bibitem{chtazh} K. Cheng, Y. Tang, and X. Zhan, Sparse graphs with an independent or foresty minimum vertex cut, Discrete Math. {\bf 349} (2026), no.~1, Paper No. 114658, 6 pp..

\bibitem{chrara} V. Chernyshev, J. Rauch, and D. Rautenbach, Forest cuts in sparse graphs, Discrete Math. {\bf 348} (2025), no.~11, Paper No. 114594, 6 pp..

\bibitem{harara} T. Hartel, J. Rauch, and D. Rautenbach, Degenerate Vertex Cuts in Sparse Graphs, arxiv:2512.21298.

\bibitem{kr} M. Kriesell, personal communication at the {\it 4th Workshop on Graphs, Algorithms and Machine Learning} (Spain, 22-27.03.2026).

\bibitem{lepf} V.B. Le and F. Pfender, Extremal graphs having no stable cutsets, Electron. J. Combin. {\bf 20} (2013), no.~1, Paper 35, 7 pp.. 

\bibitem{litazh} C. Li, Y. Tang, and X. Zhan, The minimum size of a $3$-connected locally nonforesty graph, arXiv:2410.23702.

\bibitem{rara} J. Rauch and D. Rautenbach, Revisiting Extremal Graphs Having No Stable Cutsets, Electron. J. Combin. {\bf 32} (2025), no.~4, P4.25.

\bibitem{scue} S. Schneider and T. Ueckerdt, Number of Edges in $3$-Connected Graphs with Cyclic Neighborhoods, arXiv:2511.10717.

\end{thebibliography}
\end{document}